\theoremstyle{plain}
\newtheorem{theorem}{Theorem}[section]
\newtheorem*{theorema*}{Theorem A}
\newtheorem*{theoremb*}{Theorem B}
\newtheorem{corollary}[theorem]{Corollary}
\newtheorem{definition}[theorem]{Definition}
\newtheorem{example}[theorem]{Example}
\newtheorem{lemma}[theorem]{Lemma}
\newtheorem{proposition}[theorem]{Proposition}
\newtheorem{remark}[theorem]{Remark}
\numberwithin{equation}{section}
\DeclareMathOperator{\supp}{supp}
\DeclareMathOperator{\loc}{loc}
\DeclareMathOperator{\R}{\mathbb{R}}
\DeclareMathOperator{\N}{\mathbb{N}}
\newcommand{\dimqa}{\dim\sb{\,\mathrm{qA}}\,}
\newcommand{\dima}{{\dim}\sb{\,\mathrm{A}}\,}
\begin{document}
\title[Quasi-doubling of self-similar measures]{Quasi-doubling of
self-similar measures with overlaps}
\author{Kathryn E. Hare}
\address{Kathryn E. Hare, Department of Pure Mathematics, University of
Waterloo, Canada.}
\email{kehare@uwaterloo.ca}
\author{Kevin G. Hare}
\address{Kevin G. Hare, Department of Pure Mathematics, University of
Waterloo, Canada.}
\email{kghare@uwaterloo.ca}
\author{Sascha Troscheit}
\address{Sascha Troscheit, Department of Pure Mathematics, University of
Waterloo, Canada.}
\email{stroscheit@uwaterloo.ca}
\subjclass[2010]{28C15; 28A80, 37C45}
\keywords{self-similar measures, quasi-doubling, quasi-Assouad dimension,
weak separation condition}
\thanks{KEH was supported by NSERC Grant 2016-03719. KGH was supported by
NSERC Grant 2014-03154. ST was supported by NSERC Grants 2014-03154 and
2016-03719, and the University of Waterloo.}

\begin{abstract}
The Assouad and quasi-Assouad dimensions of a metric space provide
information about the extreme local geometric nature of the set. The Assouad
dimension of a set has a measure theoretic analogue, which is also known as
the upper regularity dimension. One reason for the interest in this notion
is that a measure has finite Assouad dimension if and only if it is doubling.

Motivated by recent progress on both the Assouad dimension of measures that
satisfy a strong separation condition and the quasi-Assouad dimension of
metric spaces, we introduce the notion of the quasi-Assouad dimension of a
measure. As with sets, the quasi-Assouad dimension of a measure is dominated
by its Assouad dimension. It dominates both the quasi-Assouad dimension of
its support and the supremal local dimension of the measure, with strict
inequalities possible in all cases.

Our main focus is on self-similar measures in $\mathbb{R}$ whose support is
an interval and which may have `overlaps'. For measures that satisfy a
weaker condition than the weak separation condition we prove that finite
quasi-Assouad dimension is equivalent to quasi-doubling of the measure, a
strictly less restrictive property than doubling. Further, we exhibit a
large class of such measures for which the quasi-Assouad dimension coincides
with the maximum of the local dimension at the endpoints of the support.
This class includes all regular, equicontractive self-similar measures
satisfying the weak separation condition, such as convolutions of uniform
Cantor measures with integer ratio of dissection. Other properties of this
dimension are also established and many examples are given.
\end{abstract}

\maketitle

\section{Introduction}

The Assouad dimension of a metric space is an indication of its `thickness'
and is of great use in solving embedding problems, see \cite{assouadphd}.
Recently, the Assouad dimension has attracted significant attention in the
metric geometry community, especially when studying dynamical objects such
as attractors and fractals, see for instance \cite{Fraser14, Luukkainen,
Robinson}. It gives quantitative information about the `worst' possible
scaling of a set. The quasi-Assouad dimension was introduced by L\"{u} and
Xi \cite{LuXi} and differs from the Assouad dimension by ignoring some
subexponential effects. It is a lower bound on the Assouad dimension and an
upper bound on the Hausdorff and upper box dimensions of the set. Although
these dimensions will often coincide, such as for self-similar sets
satisfying the open set condition, there are important examples where the dimensions
are different. We refer the reader to \cite{FraserTroscheit18} and \cite%
{Hare} for deterministic and stochastic examples.

As with the Hausdorff dimension, there is an analogue of the Assouad
dimension of the measure. This dimension is also known as the upper
regularity dimension and was first studied by K\"{a}enm\"{a}ki in \cite%
{Kaen1,Kaen2}. As with the Assouad dimension of a set, it captures the worst
scaling behaviour of a measure. One reason the Assouad dimension of a
measure $\mu $ is of interest is because it is finite if and only if the
measure is doubling \cite{FraserHowroyd}, meaning, there is some constant $c$
such that $\mu (B(x,r))\geq c\mu (B(x,2r))$ for every $x\in \supp\mu $.

In this article we introduce and investigate the measure-theoretic analogue
of the quasi-Assouad dimension. We begin the paper by defining the
quasi-Assouad dimension of a measure and proving basic properties. In
particular, we show that the quasi-Assouad dimension is always dominated by
the Assouad dimension of the measure and always dominates all local
dimensions of the measure, with strict inequalities possible in each case.

Following this introductory material, we focus mainly on the quasi-Assouad
dimension of self-similar measures. As remarked in \cite{FraserHowroyd},
\textquotedblleft self-similar measures not satisfying the strong separation
condition are typically not doubling", and thus have infinite Assouad
dimension. In contrast, we show that doubling is not a requirement for
finite quasi-Assouad dimension, thus it is of interest to study the
quasi-Assouad dimension of self-similar measures with `overlap', those which
fail the open set condition. Our paper is primarily concerned with
self-similar measures that have support equal to $[0,1]$ and arise from an
iterated function system that satisfies a natural separation condition, such
as the weak separation condition. One class of examples are the Bernoulli
convolutions with contraction factor the inverse of a Pisot number.

There are two major objectives to this paper. The first is to characterize
finite quasi-Assouad dimension for such measures in terms of a geometric
doubling-like condition that is strictly weaker than doubling. In
particular, as a special case of Theorem \ref{qdimpliesqA}, (see in
particular Corollary~\ref{char}) we obtain

\begin{theorema*}
If $\mu $ is a self-similar measure that satisfies the weak separation
condition and has support $[0,1],$ then the quasi-Assouad dimension of $\mu $
is finite if and only if for every $\varepsilon >0$ there is a constant $%
C_{\varepsilon }$ such that 
\begin{equation}
\mu (B(x,r))\geq C_{\varepsilon }r^{\varepsilon }\mu (B(x,2r))\text{ for all 
}0<r<1\text{ and }x\in \supp\mu .  \label{qdwsc}
\end{equation}
\end{theorema*}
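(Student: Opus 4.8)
My plan is to prove the two implications in \eqref{qdwsc} separately. The implication ``$\dimqa\mu<\infty\Rightarrow$ \eqref{qdwsc}'' is soft and uses only that $\supp\mu=[0,1]$ is bounded; the converse is the substantive direction, is where the weak separation condition enters, and is the weak-separation instance of Theorem~\ref{qdimpliesqA}.

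For the forward implication, suppose $\dimqa\mu=s_{0}<\infty$, fix $\varepsilon>0$, and put $s=s_{0}+1$. Since $s$ strictly exceeds the quasi-Assouad dimension, for every $\theta>0$ there is a constant $C_{\theta}$ with $\mu(B(x,R))\le C_{\theta}(R/\rho)^{s}\mu(B(x,\rho))$ whenever $x\in\supp\mu$, $R<1$ and $0<\rho\le R^{1+\theta}$. Choose $\theta$ so small that $s\theta/(1+\theta)\le\varepsilon$ and set $r_{\ast}=2^{-(1+\theta)/\theta}$. For $0<r<r_{\ast}$ apply this inequality to $(\rho,R)=(r,\,r^{1/(1+\theta)})$: one has $2r<R<1$ and $r=R^{1+\theta}$, so $\mu(B(x,2r))\le\mu(B(x,R))\le C_{\theta}(R/r)^{s}\mu(B(x,r))=C_{\theta}\,r^{-s\theta/(1+\theta)}\mu(B(x,r))\le C_{\theta}\,r^{-\varepsilon}\mu(B(x,r))$. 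For $r_{\ast}\le r<1$ one has $\mu(B(x,2r))\le1$ while $\mu(B(x,r))\ge\mu(B(x,r_{\ast}))\ge\inf_{y\in[0,1]}\mu(B(y,r_{\ast}))=:m_{\ast}>0$ by compactness of $[0,1]$ and $\supp\mu=[0,1]$, so \eqref{qdwsc} again holds. Thus \eqref{qdwsc} holds with $C_{\varepsilon}=\min\{C_{\theta}^{-1},m_{\ast}\}$, using only that $\supp\mu$ is a bounded interval.

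For the converse, assume \eqref{qdwsc}. The plan rests on two standard facts about a self-similar $\mu$ with $\supp\mu=[0,1]$: \emph{(a)} a crude polynomial lower bound $\mu(J)\ge c\,|J|^{t}$ for every subinterval $J\subseteq[0,1]$, with $t<\infty$ depending only on the contraction ratios and probabilities (a routine consequence of the attractor equation, which forces $J$ to contain a cylinder $f_{\omega}([0,1])$ with $p_{\omega}$, hence $\mu(f_{\omega}([0,1]))$, bounded polynomially below in $|J|$); and \emph{(b)} the weak-separation comparability: there is a constant $N$ such that for every $x\in[0,1]$ and $0<\rho<1$ one has $\mu(B(x,\rho))\asymp\sum_{g}P_{g}$, the sum over the at most $N$ distinct maps $g=f_{\omega}$ from the $\rho$-stopping set whose cylinders meet $B(x,\rho)$, where $P_{g}$ is the total probability of the words defining $g$. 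Fact \emph{(b)} is what turns the \emph{two-scale} hypothesis \eqref{qdwsc} into control across \emph{all} pairs of scales: expressing both $\mu(B(x,R))$ and $\mu(B(x,r))$ through the common stopping set at scale $R$, the ratio $\mu(B(x,R))/\mu(B(x,r))$ is governed by the heaviest cylinder meeting $B(x,R)$ against the boundedly many cylinders meeting $B(x,r)$. I would estimate the cylinders sitting just outside $B(x,r)$ — the mechanism behind the failure of doubling — by feeding their positions into \eqref{qdwsc}, and the surviving terms by rescaling into the self-similar structure and invoking \emph{(a)}. The target is a bound
\[
\mu(B(x,R))\le C_{\delta}\,(R/r)^{\beta}\,\mu(B(x,r))\qquad\text{whenever } r\le R^{1+\delta},\ R<1,\ x\in\supp\mu,
\]
with $\beta<\infty$ that stays bounded as $\delta\to0$, which gives $\dimqa\mu\le\beta<\infty$.

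I expect this essentially $\delta$-uniform estimate to be the main obstacle. A naive telescoping through the $\asymp\log(R/r)$ intermediate scales, applying \eqref{qdwsc} once per step, accumulates a factor that is super-polynomial in $\log(R/r)$ and is useless; and fact \emph{(a)} on its own yields only an exponent of order $t/\delta$, which blows up as $\delta\to0$. The resolution must use that, under the weak separation condition, the overlap multiplicity $N$ is finite and $\mu$ at each scale is comparable to a bounded union of rescaled copies of $\mu$ itself, so that the non-doubling contributions occur only boundedly often along a chain of scales rather than compounding at every generation; this is where the combinatorial finiteness from the separation condition, the precise polynomial form of \eqref{qdwsc}, and the constraint $r\le R^{1+\delta}$ all have to be used simultaneously. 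In the general Theorem~\ref{qdimpliesqA} the role of \eqref{qdwsc} is taken by the analogous geometric doubling-type condition and that of the weak separation condition by the stated weaker separation hypothesis, and Corollary~\ref{char} is the specialization back to the weak separation condition.
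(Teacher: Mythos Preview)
Your forward implication is correct and essentially the same as the paper's Proposition~\ref{doubling}(ii).

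The converse, however, is only a sketch with an explicitly admitted gap, and your proposed resolution misses the actual mechanism. You correctly see that naive telescoping of \eqref{qdwsc} is useless and that fact~(a) alone gives an exponent of order $t/\delta$. But your intuition that the fix is ``non-doubling contributions occur only boundedly often along a chain of scales'' is not how the argument goes, and I do not see how to make it work.

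The paper's idea is different and cleaner: quasi-doubling is \emph{not} iterated across scales at all. One works with the net intervals $\Delta_k(x)$ and the quantities $P_k(\Delta_k(x))$ of \eqref{eq:netIntervals}. The crucial point is Lemma~\ref{PnRel}(a): for \emph{every} $x$ and $k$ one has $P_k(\Delta_k(x))\ge A\,P_{k-1}(\Delta_{k-1}(x))$ with a \emph{fixed} constant $A=\min_j p_j^{\Theta}$. Iterating gives $P_N(\Delta_N(x))/P_n(\Delta_n(x))\le t^{\,n-N}$ with $t=A^{-1}$, which is already a $\delta$-independent polynomial bound in $R/r$. Quasi-doubling \eqref{qdwsc} is applied just \emph{once}, at the top scale $N$, to pass from the ball $B(x,R)$ to the net interval $\Delta_N(x)$ by absorbing the adjacent net intervals $\Delta_N^L,\Delta_N^R$; this contributes only a factor $q^N$ with $q>1$ arbitrary. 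Since $n-N\ge \delta N/(1+\delta)$, one may take $q=2^{\delta/(1+\delta)}$ so that $q^N\le 2^{\,n-N}$, and the final exponent is $2\log(2t)/|\log\lambda|$, independent of $\delta$. A second case (where $\Delta_N(x)$ is abnormally short and coincides with its descendants down to some level $j$) requires one more application of quasi-doubling at level $j$, but the structure is the same.

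So the missing idea is: the descent from scale $R$ to scale $r$ is controlled entirely by the nested structure of net intervals via Lemma~\ref{PnRel}, and the hypothesis \eqref{qdwsc} serves only to compare a ball with its containing net interval at a single level. Your facts (a) and (b) are not substitutes for this; (a) is too crude by exactly the margin you identified, and (b) does not by itself yield the monotone-in-$k$ lower bound that Lemma~\ref{PnRel}(a) provides.
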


It is easy to see that any measure that is doubling satisfies property (\ref%
{qdwsc}). More generally, we also prove that any measure (whether
self-similar or not) that has finite quasi-Assouad dimension satisfies this
property.

Fraser and Howroyd prove in \cite{FraserHowroyd} that if $\mu $ is a
self-similar measure that satisfies the strong separation condition, then
the Assouad dimension of $\mu $ is equal to the supremal local dimension of $%
\mu $. Hence the quasi-Assouad dimension of the measure is also equal to its
supremal local dimension. It is natural to ask if this is more generally
true for self-similar measures. In fact, it need not be true even for
self-similar measures that satisfy the open set condition since their
quasi-Assouad dimension can be infinite; see Example \ref{exam:OSC}.

The second major goal of the paper is to prove that the quasi-Assouad
dimension is equal to the maximum local dimension for an interesting class
of `overlapping' self-similar measures which we now briefly describe. An
equicontractive self-similar measure is said to be\textit{\ regular} if the
probabilities associated with the left and right-most contractions from the
underlying iterated function system are equal and minimal. Much studied
examples of equicontractive, regular self-similar measures that satisfy the
weak separation condition include Bernoulli convolutions with contraction
ratios the inverse of Pisot numbers and convolutions of uniform Cantor
measures on Cantor sets with integer ratios of dissection; c.f. \cite{F1,
HHM, PSS, Sh}. An extension of the notion of regular to self-similar
measures that are not necessarily equicontractive is known as \textit{%
generalized regular}; see Definition \ref{genreg}. It is a consequence of
Theorem~\ref{thm:theLocDim} (see in particular Corollary~\ref{cor:regqA})
that

\begin{theoremb*}
If $\mu $ is a generalized regular, self-similar measure that satisfies the
weak separation condition and has support $[0,1]$, then the quasi-Assouad
dimension of $\mu $ is 
\begin{equation*}
\dim _{qA}\mu =\max \{\overline{\dim }_{\loc}\,\mu (x):x\in \supp\mu \}.
\end{equation*}%
In particular, this is true for equicontractive, regular self-similar
measures.
\end{theoremb*}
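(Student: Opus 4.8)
The plan is to prove the single inequality $\dim_{qA}\mu\le\alpha$, where $\alpha:=\overline{\dim}_{\loc}\mu(0)$ is the upper local dimension of $\mu$ at the left endpoint of $[0,1]$, and to extract the full statement from it. The general lower bound proved earlier in the paper gives $\dim_{qA}\mu\ge\sup\{\overline{\dim}_{\loc}\mu(x):x\in\supp\mu\}\ge\overline{\dim}_{\loc}\mu(0)=\alpha$, so once the upper bound is in hand we get $\alpha\le\sup\{\overline{\dim}_{\loc}\mu(x):x\in\supp\mu\}\le\dim_{qA}\mu\le\alpha$; hence all three quantities coincide, the supremum is attained (at $0$, and by the symmetry of the two endpoints inherent in the notion of generalized regularity also at $1$), and in particular $\dim_{qA}\mu<\infty$, consistent with Theorem~\ref{qdimpliesqA}. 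It is convenient to record at the outset what $\alpha$ is: writing $f_1(x)=\rho_1x$ for the left-most map, with weight $p_1$, the weak separation condition bounds how many cylinders can crowd into a neighbourhood of $0$, so $\mu([0,t])\asymp t^{\log p_1/\log\rho_1}$ and $\alpha=\log p_1/\log\rho_1$; the defining feature of generalized regularity (Definition~\ref{genreg}) is that this exponent is the largest among $\log p_i/\log\rho_i$ over the maps of the IFS and is matched at the right endpoint.

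To prove $\dim_{qA}\mu\le\alpha$ I would work with the net interval decomposition of $[0,1]$ attached to the IFS, which exists precisely because $\mu$ satisfies the weak separation condition: at each level $h$ one has a partition of $[0,1]$ into finitely many net intervals, all of comparable length and contracting geometrically in $h$, organised in a descendant tree, together with a finite collection of transition matrices so that the $\mu$-measure of a net interval is comparable to the associated product of transition matrices read along its symbolic address. The standard reduction then replaces $\mu(B(x,R))/\mu(B(x,r))$, for $0<r\le R^{1+\delta}<R<1$, by the comparison of $\mu$ on two nested net intervals: choosing levels $m\le n$ so that level-$m$ and level-$n$ net intervals have lengths comparable to $R$ and $r$ respectively, the ball $B(x,R)$ meets only boundedly many level-$m$ net intervals while $B(x,r)$ contains a level-$n$ net interval $\Delta'\ni x$ that is a descendant of one of them, $\Delta$, the weak separation condition keeping all overlap constants uniform. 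One is thus reduced to bounding $\mu(\Delta)/\mu(\Delta')$, i.e.\ the reciprocal of a product of $n-m$ transition matrices. The role of the \emph{quasi}-Assouad dimension is that the constraint $r\le R^{1+\delta}$ becomes, up to an additive $O(1)$, the inequality $n-m\ge\delta m$: only cascades occupying a definite proportion of the whole depth are relevant, and the short, anomalous cascades responsible for the infinitude of the ordinary Assouad dimension of $\mu$ are excluded.

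The crux is then the estimate that there is a function $\varepsilon(\delta)\to0$ as $\delta\to0$ and constants $c_\delta>0$ such that, whenever $\Delta'$ is a level-$n$ descendant of a level-$m$ net interval $\Delta$ with $n-m\ge\delta m$,
\begin{equation*}
\frac{\mu(\Delta')}{\mu(\Delta)}\ \ge\ c_\delta\left(\frac{|\Delta'|}{|\Delta|}\right)^{\alpha+\varepsilon(\delta)}.
\end{equation*}
To establish this one studies products of transition matrices. Phrased in this language, generalized regularity says that the constant cascade staying at the left (equivalently, right) endpoint is the one that decays most slowly, at rate $\alpha$ measured against the geometric rate at which net intervals shrink; a general long cascade is split into a bulk part, along which the decay rate per step is at most $\alpha$, plus a controlled collection of anomalous transitions contributing together only a subexponential-in-$(n-m)$ multiplicative defect, and since $n-m\ge\delta m$ this defect inflates the exponent by at most $\varepsilon(\delta)\to0$. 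Substituting the displayed bound into the reduction of the previous paragraph gives $\mu(B(x,R))/\mu(B(x,r))\le C_\delta(R/r)^{\alpha+\varepsilon(\delta)}$ for every $x\in\supp\mu$ and all $0<r\le R^{1+\delta}<R<1$; letting $\delta\to0$ yields $\dim_{qA}\mu\le\alpha$.

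I expect the displayed estimate to be the main obstacle, and it is hard for two reasons. First, one has to identify $\alpha=\overline{\dim}_{\loc}\mu(0)$ with the slowest sustainable cascade rate — this is exactly where the generalized regularity hypothesis is used — which amounts to ruling out the possibility that some non-constant cascade deep in the interior, built from a near-coincidence between two maps of the IFS, decays more slowly (and would thereby produce a point with larger local dimension than the endpoints). Second, and more delicately, one must control the anomalous transitions finely enough that the exponent inflation genuinely tends to $0$ with $\delta$; this is precisely the point at which the quasi-Assouad dimension (finite here) parts company with the ordinary Assouad dimension (typically infinite). A lesser, bookkeeping difficulty is that for a net interval $\Delta'$ lying near the boundary of its parent the crude inequality $\mu(B(x,r))\ge\mu(\Delta')$ loses a scale-dependent factor; this is absorbed by passing to a neighbouring level, which is exactly the kind of loss that the `quasi' in quasi-Assouad is built to tolerate.
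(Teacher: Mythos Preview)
Your overall strategy matches the paper's: obtain the lower bound from Proposition~\ref{prop:boundedBylocDim}, reduce the upper bound to a ratio of $P$-values on nested net intervals, and use the quasi-gap $n-N\gtrsim\delta N$ to absorb subexponential corrections. But several of your concrete assertions are wrong and the key step does not work as you describe.

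First, your reading of Definition~\ref{genreg} is incorrect. Generalized regularity is \emph{not} the statement that $\log p_i/\log r_i$ is maximal for the endpoint maps; it is the condition $\lim_n Q_n q^{-n}\Gamma_n=0$ for every $q>1$, where $Q_n=\sup_{x,N}P_N(\Delta_N(x))/P_{N+n}(\Delta_{N+n}(x))$ and $\Gamma_n=P_n(\Delta_n(0))+P_n(\Delta_n(1))$. Correspondingly, $\overline{\dim}_{\loc}\mu(0)=\log s/\log\lambda$ with $s=\liminf(\Gamma_n^L)^{1/n}$ (Proposition~\ref{LocDim0}); your formula $\alpha=\log p_1/\log\rho_1$ holds only when a single map fixes $0$, and your justification ``WSC bounds how many cylinders crowd into a neighbourhood of $0$, so $\mu([0,t])\asymp t^{\log p_1/\log\rho_1}$'' is not what the WSC gives. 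The paper uses generalized regularity exactly once, to pass from $Q_m$ to $q^m(\Gamma_m^L)^{-1}$ and thence to $s^{-m(1+\varepsilon/2)}$; your picture of comparing ``$\log p_i/\log\rho_i$'' would at best reproduce the crude bound $Q_m\le(\min_j p_j)^{-\Theta m}$ of Lemma~\ref{PnRel}(a), which is too weak.

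Second, net intervals of a given level are \emph{not} of comparable length, even under the WSC: an interval whose endpoints are an image of $0$ and an image of $1$ can be arbitrarily short. Only adjacent \emph{pairs} have length bounded below. Your reduction ``$B(x,r)$ contains a level-$n$ net interval $\Delta'\ni x$'' therefore need not hold; the paper avoids this by working with $P_n(\Delta_n(x))$ and the inequality $\mu(B(x,\lambda^n))\ge P_n(\Delta_n(x))$ of~(\ref{PnLower}), which holds regardless of the length of $\Delta_n(x)$.

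Third, there is no ``finite collection of transition matrices'' under the WSC; that is the finite type property, which is strictly stronger. The paper's substitute is the scalar $P_n(\Delta)$ of~(\ref{eq:netIntervals}) and the ratio $Q_n$, and your displayed crux inequality is, in that language, precisely $Q_{n-N}\le q^{n-N}(\Gamma_{n-N}^L)^{-1}\le q^{n-N}s^{-(n-N)(1+\varepsilon/2)}$ --- obtained directly from Definition~\ref{genreg}, not from any matrix-product splitting into ``bulk'' and ``anomalous'' transitions.
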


As we observe in Remark \ref{wkcomp}, it is not necessary for a measure to
be generalized regular for its quasi-Assouad dimension to coincide with its
maximum upper local dimension. This is a consequence of the bounds we obtain
in Theorem \ref{LocComp} for the quasi-Assouad dimension of a larger class
of sets known as \textit{weakly comparable}. It is unknown if all the weakly
comparable measures have the property that their quasi-Assouad dimensions
coincide with their maximum upper local dimensions.

The results stated above actually hold for a strictly larger class of
self-similar measures than those satisfying the weak separation condition.
We call this weaker property the \textit{asymptotic gap weak separation
condition}. This property, described in Definition \ref{agwsc}, is closely
related to the asymptotic weak separation condition introduced by Feng in 
\cite{FengSalem} (and may coincide with it). One reason for the interest in
this property is that it is satisfied by Bernoulli convolutions with
contraction ratios the inverse of Salem numbers since these measures do not
satisfy the weak separation condition.\smallskip

The paper is organized as follows. In Section \ref{sect:basicProp} we define
the quasi-Assouad dimension of a measure and prove basic properties such as
the relationship with the Assouad dimension and the local dimensions of the
measure. In Section \ref{sect:IFSs}, we introduce the asymptotic gap weak
separation condition and see that it lies between the weak separation
condition and the asymptotic weak separation condition. Many of our results
on the quasi-Assouad dimension of self-similar measures that satisfy this
separation property rely upon good estimates of the measure of net
intervals, certain subintervals of $[0,1]$ that arise naturally through the
iterative process. Useful technical results relating the measures of net
intervals and the measures of balls can be found in Section \ref{sect:IFSs}.
These were motivated by the study of measures of finite type; see \cite{F3}, 
\cite{HHM}, or \cite{HHS} for further background information on net
intervals, measures of finite type, and other related notions.

In Section \ref{sect:finite} we characterize finite quasi-Assouad dimension
for measures satisfying the asymptotic gap weak separation condition in
terms of the quasi-doubling property and show that all measures with finite
quasi-Assouad dimension have this property. In Sections \ref{sect:wc} and %
\ref{sect:regular} we introduce weakly comparable and generalized regular
measures. Our proof that the generalized regular, self-similar measures
satisfying the asymptotic gap weak separation condition have quasi-Assouad
dimension equal to their maximum upper local dimension is given in Section %
\ref{sect:dimreg}. Dimensional properties of weakly comparable measures are
found in Section \ref{sect:dimwc}. We conclude the paper by giving an
example that demonstrates the importance of the assumption that the support
of the self-similar measure is $[0,1]$.

\section{Basic properties of the quasi-Assouad dimension}

\label{sect:basicProp} Given $X$, a compact subset of $\R^d$, we write $%
N_r(E)$ for the least number of sets of diameter at most $r$ that are
required to cover $E$. Let 
\begin{equation*}
h(\delta)= \inf\left\{ \alpha : (\exists C_1,C_2>0)(\forall
0<r<R^{1+\delta}<R<C_1) \ \sup_{x\in X}N_r(B(x,R)\cap E)\leq C_2 \left( 
\frac{R}{r} \right)^\alpha \right\} .
\end{equation*}
The Assouad dimension of $E$ is given by 
\begin{equation*}
{\dim}\sb{\,\mathrm{A}}\, E=h(0).
\end{equation*}%
The quasi-Assouad dimension is characterized by an exponential gap between $%
r $ and $R$ and is given by 
\begin{equation*}
\dim\sb{\,\mathrm{qA}}\, E=\lim_{\delta\to0}h(\delta).
\end{equation*}

Note that it is not necessary to have both constants, $C_1$ and $C_2$, in
the definition above. However, we introduced both constants as it is
convenient to be able to change between the two definitions.

Our interest in this paper is to study a natural analogue of the Assouad and
quasi-Assouad dimension for measures. By a measure we will always mean a
Borel probability measure on $\R^{d}$.

\begin{definition}
Given a measure $\mu$ and $\delta\geq 0$, set 
\begin{equation*}
H(\delta ) =\inf \left\{s:(\exists C_1,C_2>0)(\forall 0<r\leq R^{1+\delta
}\leq R\leq C_1) \sup_{x\in\supp\mu}\frac{\mu (B(x,R))}{\mu (B(x,r))} \leq
C_2\left( \frac{R}{r}\right)^{s}\right\}.
\end{equation*}
The \textbf{Assouad dimension} of $\mu$ is ${\dim}\sb{\,\mathrm{A}}%
\,\mu=H(0) $ and the \textbf{quasi-Assouad dimension} of $\mu$ is $\dim%
\sb{\,\mathrm{qA}}\,\mu=\lim_{\delta \rightarrow 0}H(\delta)$.
\end{definition}

We note that the limit must exist by monotonicity, but may be infinite. The
Assouad dimension of a measure has also been referred to as the upper
regularity dimension, see \cite{FraserHowroyd} and \cite{Kaen2}.

Since $H(\delta)$ is a non-decreasing function, we clearly have $\dim%
\sb{\,\mathrm{qA}}\,\mu\leq{\dim}\sb{\,\mathrm{A}}\,\mu$. As with the
quasi-Assouad/Assouad dimensions of sets, the quasi-Assouad and Assouad
dimensions of a measure need not coincide and it is even possible for the
Assouad dimension of a measure to be infinite, while the quasi-Assouad
dimension is finite. See Example \ref{strictex}.

In \cite[Theorem 2.1]{FraserHowroyd} it is shown that ${\dim}%
\sb{\,\mathrm{A}}\,\supp\mu \leq {\dim}\sb{\,\mathrm{A}}\,\mu$. The
analogous statement holds for the quasi-Assouad dimension of measures.

\begin{proposition}
Let $\mu$ be a Borel probability measure on $\R^d$ with bounded support.
Then %\begin{equation*}
%  \dimqa\supp\mu \leq \dimqa\mu 
%\end{equation*}
\begin{center}
\begin{tikzpicture}
  \node at (-1.1,0) {$\dimqa \supp \mu$};
  \node at (4.3,0) {$\dima\mu$.};
  \node at (2,0.5) {$\dima\supp\mu$};
  \node[rotate=20] at (0.53,0.3) {$\leq$};
  \node[rotate=-20] at (0.54,-0.3) {$\leq$};
  \node at (2,-0.5) {$\dimqa\mu$};
  \node[rotate=20] at (3.33,-0.34) {$\leq$};
  \node[rotate=-20] at (3.33,0.34) {$\leq$};
\end{tikzpicture}
\end{center}
\end{proposition}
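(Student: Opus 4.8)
The plan is to observe that three of the four inequalities in the diagram are already at hand, and to concentrate effort on the fourth. Indeed, $\dim_{A}\supp\mu\le\dim_{A}\mu$ is \cite[Theorem 2.1]{FraserHowroyd}; $\dim_{qA}\mu\le\dim_{A}\mu$ was recorded just above from the monotonicity of $H$; and $\dim_{qA}\supp\mu\le\dim_{A}\supp\mu$ follows in exactly the same way from the monotonicity in $\delta$ of the set-function $h$ (equivalently it is the corresponding inequality for sets, \cite{LuXi}). So the substantive task is to prove
\[
\dim_{qA}\supp\mu\ \le\ \dim_{qA}\mu ,
\]
and I would do this by upgrading the proof of $\dim_{A}\supp\mu\le\dim_{A}\mu$ from \cite{FraserHowroyd} so that it respects the exponential gap between the two scales.

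Write $E=\supp\mu$ and let $s>\dim_{qA}\mu$. Since $\dim_{qA}\mu=\lim_{\delta\to0}H(\delta)$ and $H$ is monotone, there is $\delta_{0}>0$ with $H(\delta)<s$ for every $\delta\in(0,\delta_{0}]$; I fix such a $\delta$ together with the associated constants $C_{2}$ and $C_{1}$, where shrinking $C_1$ we may assume $C_{1}\le1$. For $x\in E$ and scales $0<r<R^{1+\delta}<R<C_{1}/3$, take a maximal $r$-separated set $\{x_{1},\dots,x_{N}\}$ in $B(x,R)\cap E$, so that $N_{r}(B(x,R)\cap E)\le c_{d}N$ for a constant $c_d$ depending only on $d$, the balls $B(x_{i},r/2)$ are pairwise disjoint, and their union lies in $B(x,2R)$. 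The crucial estimate is the lower bound on each $\mu(B(x_{i},r/2))$: rather than comparing back to $\mu(B(x,R))$ — which would require a ratio-$2$ instance of the regularity inequality, and such instances are unavailable at small scales once the gap is imposed — I enlarge to $B(x_{i},3R)\supseteq B(x,2R)$ (valid since $x_{i}\in B(x,R)$), note that the pair $(r/2,3R)$ is admissible for $H(\delta)$, and get
\[
\mu(B(x_{i},r/2))\ \ge\ C_{2}^{-1}\Bigl(\tfrac{r}{6R}\Bigr)^{s}\mu(B(x_{i},3R))\ \ge\ C_{2}^{-1}\Bigl(\tfrac{r}{6R}\Bigr)^{s}\mu(B(x,2R)).
\]
Summing over the disjoint half-balls gives $N\,C_{2}^{-1}(r/6R)^{s}\mu(B(x,2R))\le\mu(B(x,2R))$, and since $x\in\supp\mu$ the positive factor $\mu(B(x,2R))$ cancels, leaving $N\le C_{2}(6R/r)^{s}$. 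Hence $h(\delta)\le s$; letting $\delta\to0$ and then $s\downarrow\dim_{qA}\mu$ yields $\dim_{qA}E\le\dim_{qA}\mu$.

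The single real obstacle is the point highlighted in the middle of the argument. In the Assouad proof one gets away with replacing $\mu(B(x,2R))$ by a constant multiple of $\mu(B(x,R))$ and cancelling the latter; under the quasi-Assouad gap that comparison is illegitimate at small radii, so the cancellation must instead be arranged with the \emph{same} ball $B(x,2R)$ on both sides, which is exactly what enlarging $B(x,2R)$ to $B(x_{i},3R)$ (rather than shrinking it to $B(x,R)$) achieves. Everything else is routine bookkeeping: the $r$-separated net and the dimensional covering constant, the admissibility check $r/2<r<R^{1+\delta}\le(3R)^{1+\delta}$ together with $3R<C_{1}\le1$, the harmless enlargements of $R$ to $2R$ and $3R$, and the reduction to $x\in\supp\mu$ (if $B(x,R)$ meets $\supp\mu$, replace $x$ by a point of $B(x,R)\cap\supp\mu$ and $R$ by $2R$, and otherwise $B(x,R)\cap E=\emptyset$).
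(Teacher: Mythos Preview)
Your proof is correct and follows essentially the same approach as the paper's: a packing argument in which the obstruction caused by the missing doubling comparison $\mu(B(x,2R))\sim\mu(B(x,R))$ is bypassed by shifting the centre of the large ball to one of the packing centres (the paper uses $B(y,2R)\subseteq B(x_{j_0},4R)$ for the minimizing $j_0$, you use $B(x,2R)\subseteq B(x_i,3R)$ for every $i$ and sum). The only differences are cosmetic: you bound $h(\delta)$ directly, whereas the paper argues by choosing near-extremal $y,r,R$ for the set dimension and comparing.
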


\begin{proof}
Suppose that $s=\dim\sb{\,\mathrm{qA}}\,\mu$ and $t=\dim\sb{\,\mathrm{qA}}\,%
\supp\mu$. Then, given $\varepsilon >0$ and $\delta >0$ there exists a
constant $c_1>0$ such that for all $r\leq R^{1+\delta }$ and all $x\in\supp%
\mu$,%
\begin{equation*}
\frac{\mu (B(x,R))}{\mu (B(x,r))}\leq c_1\left( \frac{R}{r}\right)
^{s+\varepsilon }.
\end{equation*}
Similarly, there exists $c_2>0$ and points $y\in \supp\mu$ and $r,R$ with $%
2r\leq R^{1+\delta }$, such that 
\begin{equation*}
N_{2r}(B(y,R)\cap \supp\mu )\geq c_2\left( \frac{R}{2r}\right)
^{t-\varepsilon }.
\end{equation*}

Let $B_{j}(x_{j},r)$, \thinspace $j=1,...,k$ be a maximal collection of
disjoint balls with centres in $B(y,R)\cap \supp\mu$. Then $\bigcup
_{j=1}^{k}B(x_{j},2r)$ covers $B(y,R)\cap \supp \mu$, and so $k\geq
N_{2r}(B(y,R)\cap \supp \mu )$.

These comments imply 
\begin{equation*}
\mu (B(y,2R))\geq k\min_{j}\mu (B(x_{j},r))=k\mu (B(x_{j_{0}},r))
\end{equation*}
for a suitable choice of index $j_{0}$. Moreover, $B(y,2R)\subseteq
B(x_{j_{0}},4R)$, thus 
\begin{align*}
c_2\left( \frac{R}{2r}\right) ^{t-\varepsilon } &\leq N_{2r}(B(y,R)\cap \supp%
\mu )\leq k\leq \frac{\mu (B(y,2R))}{\mu (B(x_{j_{0}},r))} \leq \frac{\mu
(B(x_{j_{0}},4R))}{\mu (B(x_{j_{0}},r))}\leq c_1\left( \frac{2R}{r}\right)
^{s+\varepsilon }\text{.}
\end{align*}
Since we can find arbitrarily small $R$ satisfying this inequality, we must
have $s\geq t$.
\end{proof}

\begin{example}
\label{strictex} (i) A measure $\mu $ satisfying $\dim\sb{\,\mathrm{qA}}\,%
\supp\mu <\dim\sb{\,\mathrm{qA}}\,\mu <{\dim}\sb{\,\mathrm{A}}\,\mu $: Let $%
C $ be the classic middle third Cantor set. We will label the Cantor
intervals at step $n$ of the standard construction as $I_{\omega }$ where $%
\omega \in \{0,1\}^{n}$, with the meaning that if $I_{\nu }$ is a Cantor
interval of step $n-1$, then its left descendent is the interval labelled $%
I_{\nu 0}$ and the right descendent is labelled $I_{\nu 1}$. Choose a sparse
sequence $(n_{k})$. Put $p_{0}^{(n)}=1/3$, $p_{1}^{(n)}=2/3$ if $n\neq n_{k}$
and $p_{0}^{(n_{k})}= 1/4,$ $p_{1}^{(n_{k})}=3/4$. We define the measure $%
\mu $ by the rule that $\mu (I_{\omega })=p_{\omega _{1}}^{(1)}p_{\omega
_{2}}^{(2)}\cdot \cdot \cdot p_{\omega _{n}}^{(n)}$ for $\omega =(\omega
_{1},...,\omega _{n})$. The support of $\mu$ is $C$. Provided $(n_{k})$ is
sufficiently sparse, it can be verified that 
\begin{equation*}
{\dim}\sb{\,\mathrm{A}}\,\supp\mu=\dim\sb{\,\mathrm{qA}}\,\supp\mu =\dim_{%
\mathrm{H}}C=\log 2/\log 3,
\end{equation*}
\begin{equation*}
1=\frac{-\log p_0^{(n)}}{\log 3}=\dim\sb{\,\mathrm{qA}}\,\mu,
\end{equation*}
and 
\begin{equation*}
\log 4/\log 3 = -\log (\liminf p_{0}^{(n_{k})})/\log 3={\dim}%
\sb{\,\mathrm{A}}\,\mu .
\end{equation*}
If instead $p_{0}^{(n_{k})}=1/k$, then ${\dim}\sb{\,\mathrm{A}}\,\mu =\infty 
$. The details are left to the reader. \vskip0.5em (ii) A measure $\mu$
satisfying ${\dim}\sb{\,\mathrm{A}}\,\supp\mu > \dim\sb{\,\mathrm{qA}}\,\mu$%
: By taking as $\mu$ the uniform Cantor measure on a Cantor set $C$ with
suitably varying ratios of dissection, we can arrange for%
\begin{equation*}
\dim\sb{\,\mathrm{qA}}\, \supp\mu=\log2/\log3=\dim\sb{\,\mathrm{qA}}\, \mu <{%
\dim}\sb{\,\mathrm{A}}\, \supp\mu={\dim}\sb{\,\mathrm{A}}\, \mu.
\end{equation*}

(iii) By varying both the probabilities and ratios we can construct measures 
$\mu$ satisfying 
\begin{equation*}
\dim\sb{\,\mathrm{qA}}\,\supp\mu <{\dim}\sb{\,\mathrm{A}}\,\supp\mu <\dim%
\sb{\,\mathrm{qA}}\,\mu<{\dim}\sb{\,\mathrm{A}}\,\mu \text{ or}
\end{equation*}
\begin{equation*}
\dim\sb{\,\mathrm{qA}}\,\supp\mu <\dim\sb{\,\mathrm{qA}}\,\mu<{\dim}%
\sb{\,\mathrm{A}}\,\supp\mu<{\dim}\sb{\,\mathrm{A}}\,\mu.
\end{equation*}
\end{example}

In \cite[Proposition 3.1]{FraserHowroyd} it was observed that the Assouad
dimension of a measure is finite if and only if the measure is doubling. In
Example \ref{NotDoubling} we show that a measure can fail to be doubling,
but have finite quasi-Assouad dimension. In Section~\ref{sect:finite} we
characterize finite quasi-Assouad dimension in terms of a weaker
doubling-type condition.

Recall that the \textit{{\textbf{upper local dimension}}} of a Borel
probability measure $\mu$ at $x\in \supp \mu$ is 
\begin{equation*}
\overline{\dim}_{\loc}\,\mu (x)=\limsup_{r\rightarrow 0}\frac{\log \mu
(B(x,r))}{\log r}.
\end{equation*}%
The \textit{{\textbf{lower local dimension}}}, denoted $\underline{\dim}_{%
\loc}\,\mu (x)$, is defined analogously, replacing $\limsup$ by $\liminf$.
If the upper and lower local dimensions coincide, their common value is
known as the \textit{{\textbf{local dimension}}} of $\mu$ at $x$ and we
write $\dim_{\loc}\,\mu (x)$.

The Assouad dimension is bounded below by the upper local dimension for any
point in the support of $\mu$, see \cite{FraserHowroyd}. The same holds for
the quasi-Assouad dimension.

\begin{proposition}
\label{prop:boundedBylocDim} Let $\mu$ be a Borel probability measure on $\R%
^d$. Then 
\begin{equation}
\dim\sb{\,\mathrm{qA}}\,\mu \geq \sup \{\overline{\dim}_{\loc}\,\mu (x):x\in %
\supp\mu \}.  \label{eq:boundedBylocDim}
\end{equation}
\end{proposition}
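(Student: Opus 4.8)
The plan is to run the exponential-gap estimate defining $H(\delta)$ down a geometric tower of scales and telescope, getting a lower bound for $\mu(B(x,r))$ that forces the upper local dimension at $x$ to be at most $(1+\delta)s$ for every $s>H(\delta)$; letting $s\downarrow H(\delta)$ and then $\delta\downarrow 0$ then gives the stated inequality. The extra factor $1+\delta$ is precisely what disappears in the quasi-Assouad limit.

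In detail, I would fix $x\in\supp\mu$, $\delta>0$ and $s>H(\delta)$, so that there are $C_1,C_2>0$ with $\mu(B(x,R))/\mu(B(x,r))\le C_2(R/r)^s$ whenever $0<r\le R^{1+\delta}\le R\le C_1$. Pick $R_0\in(0,C_1)$ and set $r_n=R_0^{(1+\delta)^n}$, so that $r_0=R_0$, $r_n\downarrow 0$ and $r_{n+1}=r_n^{1+\delta}$; hence each consecutive pair $(r_{n+1},r_n)$ is admissible in the estimate, giving $\mu(B(x,r_n))\le C_2\,r_n^{-s\delta}\,\mu(B(x,r_{n+1}))$. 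Telescoping from $0$ to $n$ and summing the geometric series $\sum_{k<n}(1+\delta)^k=((1+\delta)^n-1)/\delta$ yields
\[
\mu(B(x,r_n))\ \ge\ \mu(B(x,R_0))\,C_2^{-n}\,R_0^{\,s((1+\delta)^n-1)},
\]
where $\mu(B(x,R_0))>0$ because $x\in\supp\mu$.

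The remaining step, and the one needing the most care, is to upgrade this from the sparse radii $r_n$ to all small $r$ --- this is genuinely necessary, since $\overline{\dim}_{\loc}\mu(x)$ is a $\limsup$ over all radii, not a subsequential limit. Given small $r$, I would take $n$ with $r_{n+1}<r\le r_n$, combine $B(x,r)\supseteq B(x,r_{n+1})$ with the displayed bound, pass to logarithms, and divide by $\log r<0$, using $|\log r|\ge(1+\delta)^n|\log R_0|$. Then the contributions of $\log\mu(B(x,R_0))$ and $(n+1)\log C_2$ are $O(n)/(1+\delta)^n\to 0$, while the leading term contributes $s\,((1+\delta)^{n+1}-1)/(1+\delta)^n\to s(1+\delta)$; with careful sign-chasing (numerator and denominator are eventually both negative) this gives $\overline{\dim}_{\loc}\mu(x)\le s(1+\delta)$. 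Letting $s\downarrow H(\delta)$ gives $\overline{\dim}_{\loc}\mu(x)\le(1+\delta)H(\delta)$, and letting $\delta\downarrow 0$ gives $\overline{\dim}_{\loc}\mu(x)\le\lim_{\delta\to 0}H(\delta)=\dimqa\mu$ (the case $\dimqa\mu=\infty$ being trivial). Taking the supremum over $x\in\supp\mu$ finishes the proof. I expect the uniformity over each block $(r_{n+1},r_n]$ and the sign bookkeeping to be the only real obstacle; the rest is a routine geometric-series computation.
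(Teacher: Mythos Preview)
Your argument is correct, but it is organised quite differently from the paper's. The paper argues \emph{directly}: given the target $s=\sup_x\overline{\dim}_{\loc}\mu(x)$ and small $\tau>0$, it picks $x$ with $\overline{\dim}_{\loc}\mu(x)\ge s-\varepsilon$ and a sequence $R_n\to 0$ with $R_{n+1}\le R_n^{1+\delta}$ along which $\log\mu(B(x,R_n))/\log R_n$ lies in $[s-2\varepsilon,\,s+\varepsilon]$; a single two-scale comparison then shows $\mu(B(x,R_n))/\mu(B(x,R_{n+1}))\ge (R_n/R_{n+1})^{s-\tau}$, which witnesses $H(\delta)\ge s-\tau$. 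You instead run the \emph{contrapositive}: you fix $s>H(\delta)$, take the deterministic tower $r_n=R_0^{(1+\delta)^n}$, telescope the Assouad-type estimate to obtain a global lower bound $\mu(B(x,r))\gtrsim r^{(1+\delta)s+o(1)}$ valid for \emph{all} small $r$, and deduce $\overline{\dim}_{\loc}\mu(x)\le(1+\delta)H(\delta)$. The paper's route is shorter and never needs to interpolate between tower scales (it only ever looks at two consecutive radii), whereas your route yields a genuinely stronger intermediate conclusion --- a uniform scaling lower bound for $\mu(B(x,r))$ at every point of the support --- at the cost of the extra bookkeeping you flagged (handling $r\in(r_{n+1},r_n]$ and the sign-chasing). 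Both are standard manoeuvres; yours is closer in spirit to how one typically relates Assouad-type exponents to local dimensions via iteration.
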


\begin{proof}
Let $s=\sup\{\overline{\dim}_{\loc}\mu (x) : x\in\supp\mu\}$. Temporarily
fix $\tau >0$. We will show the quasi-Assouad dimension of $\mu$ is at least 
$s-\tau $.

To begin the proof, fix $\delta>0$, choose $0<\varepsilon <\tau /2$ so small
that $\delta (\tau -2\varepsilon )>3\varepsilon $ and select $x\in \supp \mu$
such that $\overline{\dim}_{\loc}\,\mu (x)\geq s-\varepsilon $. Choose a
decreasing sequence $R_{n}\rightarrow 0$ such that $R_{n+1}\leq
R_{n}^{1+\delta }$ and satisfying the property%
\begin{equation*}
s-2\varepsilon \leq \frac{\log \mu (B(x,R_{n}))}{\log R_{n}}\leq
s+\varepsilon
\end{equation*}%
for all $n.$ Then%
\begin{equation*}
\frac{\mu (B(x,R_{n}))}{\mu (B(x,R_{n+1}))}\geq \frac{R_{n}^{s+\varepsilon }%
}{R_{n+1}^{s-2\varepsilon }}.
\end{equation*}

The choice of $\varepsilon $ ensures that $R_{n+1}^{\tau -2\varepsilon }\leq
R_{n}^{(1+\delta )(\tau -2\varepsilon )}\leq R_{n}^{\tau +\varepsilon }$ and
therefore%
\begin{equation*}
\frac{R_{n}^{s+\varepsilon }}{R_{n+1}^{s-2\varepsilon }}\;\geq\; \left( 
\frac{R_{n}}{R_{n+1}}\right) ^{s-\tau }.
\end{equation*}%
That implies $\dim\sb{\,\mathrm{qA}}\,\mu \geq s-\tau $, as claimed, and as $%
\tau$ was arbitrary the desired conclusion holds.
\end{proof}

Note that the measure of Example \ref{strictex}(i) has its supremal local
dimension occuring at~$0$ and this value coincides with the quasi-Assouad
(but not the Assouad) dimension of the measure. In Theorem \ref%
{thm:theLocDim} we exhibit a class of measures for which this continues to
be true. However, Example \ref{exam:OSC} shows that it is also possible for
the quasi-Assouad dimension to be strictly larger.

\section{Self-similar Iterated Function Systems}

\label{sect:IFSs}

\subsection{Iterated function systems and separation conditions}

Let $\{S_{j}\}$ be a finite family of contractions on $\R$ such that $%
S_j(x)=r_{j}x+d_{j}$, $j=0,...,m-1$, where $r_{j}>0$. Let $%
(p_{0},...,p_{m-1})$ be a non-degenerate probability vector, i.e.\ $p_{j}>0$
and $\sum_{j=0}^{m-1}p_{j}=1$. We refer to the collection $\{S_j\}$ as an 
\textbf{\textit{iterated function system (IFS)}} and the collection of
tuples $\{S_j,p_j\}$ as a \textbf{\textit{weighted iterated function system}}%
.

There exists a unique non-empty compact set satisfying $F=%
\bigcup_{j=0}^{m-1}S_j(F)$, called the attractor, associated with the
collection $S_j$. As all maps are similarities the attractor is known as a
self-similar set. Throughout this article we will assume that the
self-similar set is the unit line $[0,1]$. There is no loss of generality in
assuming $S_{0}(0)=0 $ and $S_{m-1}(1)=1$ as $r_j>0$.

We can similarly define a unique Borel probability measure by assigning
weights to the maps. The resulting measure has support $F$. In fact, this
measure is simply the projection of a Bernoulli measure from the underlying
symbolic dynamics onto $\R$ and is referred to as the self-similar measure
associated with the weighted iterated function system $\{S_j,p_j\}$. More
precisely, the self-similar measure is the unique probability measure $\mu$
satisfying 
\begin{equation*}
\mu (E)=\sum_{j=0}^{m-1}p_{j}\mu (S_{j}^{-1}(E))\text{ for all Borel sets }E.
\end{equation*}%
We let 
\begin{equation*}
\lambda =\min r_{j}
\end{equation*}
and write $\Omega$ for the set of finite words on the alphabet $%
\{0,1,...,m-1\}$. If all $r_j$ are equal, we say that the iterated function
system is \textbf{\textit{equicontractive}}. Given a (finite) word $%
w=(w_{1},...,w_{n})$, we let $w^{-}=(w_{1},...,w_{n-1})$ and denote the
length of $w$ by $\left\vert w\right\vert $. We usually write words by
concatenation, that is $w=w_1w_2w_3\cdots$, and we define 
\begin{eqnarray*}
r_{w} = r_{w_{1}} r_{w_{2}} \cdots r_{w_{n}}\quad\text{and} \quad p_{w} =
p_{w_{1}} p_{w_{2}}\cdots p_{w_{n}}.
\end{eqnarray*}

Commonly, separation conditions are employed to give precise results about
these attractors and measures. If $F$ is the attractor of the IFS $\{S_{i}\}$
and $S_{i}(F)\cap S_{j}(F)=\varnothing $ for all $i\neq j$, we say that the
IFS satisfies the \textbf{\textit{strong separation condition (SSC)}}. If
there is an open set $U$ such that $S_{i}(U)\subseteq U$ for all $i\in
\Lambda $ and $S_{i}(U)\cap S_{j}(U)=\varnothing $ for all $i\neq j$, we say
that IFS $\{S_{i}\}$ satisfies the \textbf{\textit{open set condition (OSC)}}%
. The OSC is a less restrictive condition than the SSC, but in both cases
the Hausdorff and Assouad dimensions coincide for the attractor and their
common value is given by the unique $s$ that satisfies $\sum r_{i}^{s}=1$,
see \cite[Cor. 2.11]{Fraser14}.

Fraser and Howroyd proved that in the case when self-similar measures $\mu $
satisfy the strong separation condition, the Assouad dimension coincides
with the supremal local dimension of $\mu $, \cite[Theorem 2.4]%
{FraserHowroyd}. Since the quasi-Assouad dimension falls between these two
we immediately have the following.

\begin{proposition}
If $\mu $ is a self-similar measure satisfying the strong separation
condition, then 
\begin{equation*}
\dim _{A}\mu =\dim _{qA}\mu =\sup_{x}\{\overline{\dim }_{loc}\mu (x)\}.
\end{equation*}
\end{proposition}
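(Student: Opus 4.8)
The plan is to deduce the equality by sandwiching $\dim_{qA}\mu$ between two quantities that are already known to agree. First I would record the two general bounds, valid for an arbitrary Borel probability measure. Since $H(\delta)$ is non-decreasing in $\delta$, letting $\delta\to 0$ gives $\dim_{qA}\mu \le H(0) = \dim_A\mu$. And Proposition~\ref{prop:boundedBylocDim} gives $\dim_{qA}\mu \ge \sup\{\overline{\dim}_{\loc}\mu(x):x\in\supp\mu\}$. Neither of these uses any separation hypothesis.

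Next I would bring in the strong separation condition through the theorem of Fraser and Howroyd \cite[Theorem~2.4]{FraserHowroyd}, which asserts that a self-similar measure $\mu$ satisfying the SSC has $\dim_A\mu = \sup\{\overline{\dim}_{\loc}\mu(x):x\in\supp\mu\}$. Chaining the three relations,
\[
\sup\{\overline{\dim}_{\loc}\mu(x):x\in\supp\mu\}\ \le\ \dim_{qA}\mu\ \le\ \dim_A\mu\ =\ \sup\{\overline{\dim}_{\loc}\mu(x):x\in\supp\mu\},
\]
forces all three to be equal, which is the assertion.

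I do not expect any genuine obstacle here: the only substantive ingredient is the cited Fraser--Howroyd computation of the Assouad dimension under the SSC, and the rest is the elementary squeeze recorded above. Should a self-contained argument be wanted, the one inequality that would need a direct proof from the SSC is $\dim_A\mu \le \sup\{\overline{\dim}_{\loc}\mu(x):x\in\supp\mu\}$ (the reverse inequality being immediate from Proposition~\ref{prop:boundedBylocDim} together with $\dim_{qA}\mu\le\dim_A\mu$); but since this is exactly what \cite{FraserHowroyd} provides, nothing further is required.
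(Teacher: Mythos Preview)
Your proof is correct and is essentially identical to the paper's own argument: the paper simply notes that Fraser--Howroyd \cite[Theorem~2.4]{FraserHowroyd} gives $\dim_A\mu=\sup_x\overline{\dim}_{\loc}\mu(x)$ under the SSC, and since $\dim_{qA}\mu$ lies between these two quantities (by the general bounds already recorded), equality follows immediately. One tiny slip: $H(\delta)$ is non-\emph{increasing} in $\delta$ (larger $\delta$ imposes a more restrictive constraint on $r$), which is precisely what gives $\dim_{qA}\mu=\lim_{\delta\to 0}H(\delta)\le H(0)=\dim_A\mu$; the conclusion is unaffected.
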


This, however, can fail when relaxing the condition to the open set
condition as the following example shows.

\begin{example}
A self-similar measure satisfying the OSC, with $\dim _{qA}\mu >\sup_{x}\{%
\overline{\dim }_{loc}\mu (x)\}$.\label{exam:OSC}

Consider the IFS $S_{0}(x)=x/2$ and $S_{1}(x)=x/2+1/2,$ with probabilities $%
p_{0}>p_{1}$. Although the self-similar set of the IFS is $[0,1],$ the open
set condition is satisfied with the open set $U=(0,1)$. Temporarily fix $%
\delta >0$. Choose $N$ large and let $k=\lfloor \delta N\rfloor $. Take $x$
to be the midpoint of the interval $S_{0\,1^{[N+k]}}[0,1]$, where $1^{[N+k]}$
is the word consisting of $N+k$ many letters $1$. This interval has left
endpoint $1/2$ and length $2^{-(N+k)}$. Choose $R=2^{-N}$ and $%
r=2^{-(N+k+2)} $, so $r\leq R^{1+\delta }$. Then $B(x,R)$ contains $%
S_{1\,0^{[N]}}[0,1]$, so $\mu (B(x,R))\geq p_{1}p_{0}^{N}$, while $%
B(x,r)\subseteq S_{0\,1^{[N+k]}}[0,1]$ and hence has $\mu $-measure at most $%
p_{0}p_{1}^{N+k} $. Thus if we are to have 
\begin{equation*}
\frac{\mu (B(x,R))}{\mu (B(x,r))}\leq C_{1}\left( \frac{R}{r}\right) ^{s}
\end{equation*}%
for all large $N$, it must be true that $2^{\delta s}\geq
p_{0}p_{1}^{-(1+\delta )}$, in other words, 
\begin{equation*}
s\geq \log 2\left( \frac{1}{\delta }+1\right) \left\vert \log
p_{1}\right\vert -\frac{|\log p_{0}|}{\delta }=\left\vert \log
p_{1}\right\vert +\frac{\left\vert \log p_{1}\right\vert -\left\vert \log
p_{0}\right\vert }{\delta }.
\end{equation*}%
This inequality shows that $H(\delta )\rightarrow \infty $ as $\delta
\rightarrow 0$, hence $\dim\sb{\,\mathrm{qA}}\,\mu $ is infinite.

As this IFS satisfies the open set condition, it is known that 
\begin{equation*}
\{\overline{\dim}_{\loc}\mu (x):x\in \supp\mu \}=\left[\frac{\left\vert \log
p_{0}\right\vert }{\log 2},\frac{\left\vert \log p_{1}\right\vert }{\log 2}%
\right],
\end{equation*}
thus the inequality of Proposition \ref{prop:boundedBylocDim} can be strict.
\end{example}

\vskip1em

In this article we will focus on the quasi-Assouad dimension and will show
that various desirable properties hold under even weaker conditions that we
will now state.

Recall that $\lambda = \min r_j$. Set%
\begin{equation*}
\Lambda_{n}=\{u\in\Omega :r_{u}\leq \lambda ^{n}\text{ and }%
r_{u^{-}}>\lambda ^{n}\}
\end{equation*}
for the set of words that are comparable to $\lambda^n$. In the
equicontractive setting, $\Lambda_{n}$ are simply the words of length $n$.

Lau and Ngai~\cite{LauNgai} studied self-similar IFS under a weaker
separation condition that limits the number of overlapping distinct images.
This so-called weak separation condition subsequently turned out to be the
`correct' separation condition to consider when dealing with the Assouad
dimension of sets. Indeed, the Assouad dimension coincides with the
Hausdorff dimension for self-similar sets satisfying the weak separation
condition and is maximal otherwise, see \cite{FHOR}.

We now recall this definition. Let 
\begin{equation*}
\mathcal{A}(x,r)=\{v\in\Omega \;:\; \lvert S_v([0,1])\rvert\leq r ,\;\;
\lvert S_{v^-}([0,1]) \rvert> r\text{ and }x\in S_v([0,1]) \}
\end{equation*}
and 
\begin{equation*}
\mathcal{M}(x,r) = \{S_v \;:\; v\in\mathcal{A}(x,r)\}.
\end{equation*}
The self-similar IFS $\{S_i\}$ is said to satisfy the \textbf{\textit{weak
separation condition}} if 
\begin{equation*}
\sup_{r\in(0,1)}\sup_{x\in[0,1]} \# \mathcal{M}(x,r)<\infty.
\end{equation*}

Zerner~\cite{Zerner} showed that this is equivalent to the identity not
being an accummulation point of 
\begin{equation*}
\mathcal{E}=\{S_v^{-1}\circ S_w \;:\; v,w\in\Omega\}
\end{equation*}
with respect to the pointwise topology.

Iterated function systems generating Bernoulli convolutions where the
contraction ratio is the reciprocal of a Pisot number, as well as iterated
function systems of the form $(S_{j})$ where $S_{j}(x)=x/d+d_{j}$ where $%
d\in \mathbb{N}$ and $d_{j}\in \mathbb{Q}$ are examples that satisfy the WSC.

Using Zerner's definition, it is straightforward to show that Definition~\ref%
{wsc} below is another equivalent way of stating the weak separation
condition. We have opted to state it in this version as this is the form
that we will use in this article.

\begin{definition}
\label{wsc}An iterated function system $\{S_{j}\}$ satisfies the \textbf{%
weak separation condition (WSC)} if there exists $a>0$ such that if $u, w\in
\Lambda_{n}$ and $S_{u }(0)\neq S_{w}(0)$, then 
\begin{equation*}
\left\vert S_{u}(0)-S_{w}(0)\right\vert \geq a\lambda ^{n}\quad\text{and}%
\quad \left\vert S_{u}(1)-S_{w}(1)\right\vert \geq a\lambda ^{n}.
\end{equation*}
\end{definition}

Motivated by Ngai and Lau's original definition of the weak separation
condition, Feng~\cite{FengSalem} introduced a separation condition, known as
the asymptotic weak separation condition, also in terms of overlapping
images.

\begin{definition}
\label{awsc*} Let $\{S_i\}$ be a self-similar IFS. We say that $\{S_i\}$
satisfies the \textbf{asymptotic weak separation condition (AWSC)} if there
exists non-decreasing function $g(r)$ such that 
\begin{equation*}
\log g(r)/\log r\to 0\quad\text{ and }\quad\sup_{x\in[0,1]}\#\mathcal{M}%
(x,r)\leq g(r).
\end{equation*}
\end{definition}

It is easily observed that the AWSC is weaker than the WSC. In a similar
fashion, we define a useful separation condition on the asymptotic
separation of images of $0$ and $1$.

\begin{definition}
\label{agwsc} An iterated function system $\{S_j\}$ satisfies the \textbf{%
asymptotic gap weak separation condition (AGWSC)} if there exists some
non-increasing function $f(n)>0$ such that $(\log f(n))/n\rightarrow 0$ as $%
n\rightarrow \infty $ and%
\begin{equation*}
\left\vert S_{u}(0)-S_{w}(0)\right\vert \geq f(n)\lambda ^{n} \quad\text{and}%
\quad \left\vert S_{u}(1)-S_{w}(1)\right\vert \geq f(n)\lambda ^{n}
\end{equation*}%
whenever $u,w\in \Lambda_{n}$ and $S_{u}(0)\neq S_{w}(0)$.
\end{definition}

Note that the AGWSC is similar in spirit to the AWSC by allowing the
defining feature to vary on a subexponential scale rather than be finite. We
will show below that the AGWSC implies the AWSC. While closely related, we
are not able to show that these two conditions are equivalent. Note,
however, that the two notions coincide in the only known family to satisfy
the AWSC (or AGWSC), but not the WSC. This family are the IFSs generating
Bernoulli convolutions with contraction ratio the inverse of Salem numbers,
see \cite{FengSalem}.

\begin{lemma}
Let $\{S_i\}$ be a self-similar IFS of the unit line that satisfies the
AGWSC. Then $\{S_i\}$ satisfies the AWSC.
\end{lemma}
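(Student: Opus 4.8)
The plan is to verify the AWSC straight from its definition, i.e.\ to produce a monotone function $g$ with $\log g(r)/\log r\to0$ and $\sup_{x}\#\mathcal M(x,r)\le g(r)$. The key structural observation is that every element of $\mathcal M(x,r)$ is an orientation-preserving similarity $S_v(t)=r_vt+S_v(0)$, hence is determined by the pair $\big(S_v(0),\,r_v\big)$ of its left endpoint and contraction ratio. Consequently $\#\mathcal M(x,r)\le A(x,r)\cdot B(r)$, where $A(x,r)=\#\{S_v(0):v\in\mathcal A(x,r)\}$ and $B(r)=\#\{r_v:v\in\mathcal A(x,r)\}$, and I would bound these two factors separately.

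The factor $B(r)$ needs no separation hypothesis whatsoever. If $v\in\mathcal A(x,r)$ then $r_v\le r$ while $r_v=r_{v^-}r_{v_{|v|}}>\lambda r_{v^-}>\lambda r$, so $r_v\in(\lambda r,r]$. Writing $m_j$ for the number of occurrences of the letter $j$ in $v$, the ratio $r_v=\prod_j r_j^{m_j}$ depends only on the multiplicity vector $(m_0,\dots,m_{m-1})$, and the condition $r_v\in(\lambda r,r]$ forces $\sum_j m_j\,(-\log r_j)$ into an interval of fixed length $-\log\lambda$ around $\log(1/r)$. As each $-\log r_j$ is at least the positive constant $-\log\max_j r_j$, this confines $(m_j)$ to a simplex of side $O(\log(1/r))$, so $B(r)=O\big((\log(1/r))^{m}\big)$; in particular $B(r)$ is only polynomial in $\log(1/r)$.

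For $A(x,r)$ I would push the words in $\mathcal A(x,r)$ onto one of the distinguished scales $\lambda^{N}$ at which the AGWSC is formulated. Choose $N=N(r)$ minimal with $\lambda^{N}\le\lambda r$, so $\lambda^{N}\asymp r$ and $r_v>\lambda r\ge\lambda^{N}$ for every $v\in\mathcal A(x,r)$. For such $v$ let $v^{\flat}=v0^{k}$, where $k\ge1$ is minimal with $r_{v^{\flat}}\le\lambda^{N}$. Appending $0$'s fixes the left endpoint (since $S_0(0)=0$), so $S_{v^{\flat}}(0)=S_v(0)$; minimality of $k$ gives $r_{(v^{\flat})^{-}}>\lambda^{N}\ge r_{v^{\flat}}$, hence $v^{\flat}\in\Lambda_{N}$; and because $\mathcal A(x,r)$ is an antichain (the value $r_w$ is non-increasing along any word), the map $v\mapsto v^{\flat}$ is injective on $\mathcal A(x,r)$. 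All the $S_v(0)$ lie in $[x-r,x]$, and whenever $S_{v_1}(0)\ne S_{v_2}(0)$ the AGWSC applied to $v_1^{\flat},v_2^{\flat}\in\Lambda_{N}$ yields $|S_{v_1}(0)-S_{v_2}(0)|=|S_{v_1^{\flat}}(0)-S_{v_2^{\flat}}(0)|\ge f(N)\lambda^{N}$. Therefore $A(x,r)\le r/\big(f(N)\lambda^{N}\big)+1=O\big(1/f(N)\big)$.

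Putting the pieces together, $\#\mathcal M(x,r)\le O\big((\log(1/r))^{m}/f(N(r))\big)$ uniformly in $x$. Since $N(r)\sim\log(1/r)/\log(1/\lambda)\to\infty$ and $(\log f(n))/n\to0$ by hypothesis, the right side is $\exp\big(o(\log(1/r))\big)$; replacing it by $g(r)=\sup\{\sup_x\#\mathcal M(x,r'):r\le r'<1\}$ gives a monotone $g$ (finite because $\#\mathcal M(x,r')$ is uniformly bounded when $r'$ is bounded away from $0$, words then having bounded length) with $\log g(r)/\log r\to0$, which is exactly the AWSC. The step I expect to need the most care is the bridge in the third paragraph: the AGWSC speaks only to the special radii $\lambda^{n}$, whereas $\mathcal M(x,r)$ is tied to an arbitrary radius $r$, and one must both $0$-pad words up to level $\Lambda_{N}$ without disturbing left endpoints and recognise that the remaining freedom --- the contraction ratio at scale $r$ --- is automatically only polynomially abundant and so needs no separation input.
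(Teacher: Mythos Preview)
Your proof is correct, and it takes a genuinely different route from the paper's.

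The paper identifies a similarity $S$ by the pair $\big(S(0),S(1)\big)$ and then argues, directly from the AGWSC, that two distinct maps in $\mathcal M(x_0,r)$ cannot have both endpoints within $f(n)\lambda^n$ of each other; since $S(0),S(1)$ lie in an interval of length $3r$, this yields $\#\mathcal M(x_0,r)\le\big(3/(\lambda f(n))\big)^2$ and the AWSC follows. You instead identify $S$ by the pair $\big(S(0),r_v\big)$ and treat the two coordinates asymmetrically: the left endpoints you separate via the AGWSC, while the contraction ratios you count by a purely combinatorial argument (lattice points in a simplex), requiring no separation hypothesis at all. Your resulting bound is of order $(\log(1/r))^m/f(N)$ rather than $1/f(n)^2$; both are sub-exponential, which is all that matters. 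Two things your argument buys: first, the combinatorial bound on $B(r)$ is a nice observation that isolates exactly where the separation condition is needed; second, your $0$-padding step $v\mapsto v^\flat$ makes fully explicit the passage from the arbitrary scale $r$ of $\mathcal M(x,r)$ to the quantised scale $\lambda^N$ at which the AGWSC is formulated, a point the paper's proof handles somewhat loosely. (Incidentally, you do not actually need the injectivity of $v\mapsto v^\flat$: the inclusion $\{S_v(0):v\in\mathcal A(x,r)\}\subseteq\{S_w(0):w\in\Lambda_N\}$ already suffices to bound $A(x,r)$, since $S_{v^\flat}(0)=S_v(0)$.)
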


\begin{proof}
Let $x_0\in[0,1]$ and let $v\in\Omega$ be such that $x_0\in S_v([0,1])$. Set 
$r= \lvert S_v([0,1]) \rvert$, then any element $S\in\mathcal{M}(x_0, r)$
satisfies $\gamma r<\lvert S([0,1])\rvert \leq r$ for some uniform $\gamma>0 
$. Thus 
\begin{equation*}
S(0),S(1)\in [S_v(0)-r,S_v(0)+2r].
\end{equation*}
Now, since the IFS satisfies the AGWSC, no two distinct maps $S_1, S_2\in%
\mathcal{M}(x_0, r)$ may have both $\lvert S_1(0)-S_2(0)\rvert< \lambda^n
\cdot f(n)$ and $\lvert S_1(1)-S_2(1) \rvert < \lambda^n \cdot f(n)$, where $%
n$ is such that $r\leq \lambda^{n+1}$ and $r>\lambda^{n+2}$. Thus, there are
at most 
\begin{equation*}
\frac{3r}{\lambda^n f(n)} \leq \frac{3}{\lambda f(n)}
\end{equation*}
choices for $S(0)$ and $S(1)$, giving 
\begin{equation*}
\# \mathcal{M}(x_0,r) \leq \left( \frac{3}{\lambda f(n)} \right)^2.
\end{equation*}
Now $v$, and thus $r$, was arbitrary and so the above inequality holds for
all $n$. We obtain 
\begin{align*}
\lim_{r\to0}\frac{ \# \mathcal{M}(x_0,r)}{\lvert\log r\rvert} \leq
\lim_{n\to \infty}\frac{2\log3-2\log (\lambda f(n))}{\lvert\log
\lambda^{n+2}\rvert}=0,
\end{align*}
showing that the AWSC is satisfied.
\end{proof}

Since we will fix a self-similar measure by fixing a weighted iterated
function system, we will also refer to a measure $\mu$ as satisfying the
AGWSC or WSC, where we should say ``the weighted iterated function system
associated with $\mu$''.

In the weak separation case we may assume without loss of generality that
the constant $a$ arising in the definition of the weak separation condition
satisfies $a<\lambda$. Similarly, in the asymptotic gap weak separation case
we can assume that $f(n)<\lambda$ for all $n$.

\begin{definition}
Let $\{S_i\}$ be an equicontractive iterated function system with
contraction ratio $\lambda$. Further, let $c>0$ be such that $c(1-\lambda)$
is the diameter of the associated attractor. The IFS is said to be of 
\textbf{finite type} if there is a finite set $F\subseteq \mathbb{R}$ such
that if $u,w\in \Lambda_{n}$ then either 
\begin{equation*}
\left\vert S_{u}(0)-S_{w}(0)\right\vert >c\lambda ^{n}\quad\text{ or }\quad
\lambda ^{-n}(S_{u}(0)-S_{w}(0))\in F.
\end{equation*}
\end{definition}

The notion of finite type was introduced by Ngai and Wang \cite{NW}. It can
also be defined for IFS that are not equicontractive, but as this is more
technical and not needed in this article, we omit its definition. Examples
of iterated function systems of finite type include Bernoulli convolutions
with contraction ratio the inverse of Pisot numbers. For further
information, the interested reader may peruse \cite{HHS} and the references
cited therein.

We have the following inclusions among these classes, all of which are known
to be proper except for the last: 
\begin{equation*}
\text{OSC}\subset \text{Finite Type}\subset \text{WSC}\subset \text{AGWSC}
\subseteq \text{AWSC}.
\end{equation*}

\subsection{Net intervals}

Recall that the attractor of the IFS is assumed to be $[0,1]$. For each $%
n\in \mathbb{N}$, let $h_{1},...,h_{s_{n}}$ denote the elements of the set $%
\{S_{u}(0),S_{u}(1):u\in \Lambda_{n}\}$, listed in increasing order. Put%
\begin{equation*}
\mathcal{F}_{n}=\{[h_{j},h_{j+1}]:1\leq j\leq s_{n}-1\}.
\end{equation*}%
The elements of $\mathcal{F}_{n}$ are called the\textit{\ net intervals of
level }$n$. Of course, the net intervals of a given level $n\geq 1$ cover $%
[0,1]$. The interval $[0,1]$ will be the (unique) net interval of level $0$.
Since $\left\vert S_{u}(0)-S_{u}(1)\right\vert \leq r_{u }\leq \lambda ^{n}$
when $u\in \Lambda_{n}$, any net interval of level $n$ has length at most $%
\lambda ^{n}$. We denote the length of the net interval $\Delta $ by $%
l(\Delta )$.

Each $x\in [0,1]$ belongs to either one or two net intervals of level $n$.
The point $x$ will belong to two net intervals if and only if $x$ is an
endpoint of a net interval of level $n$. In both cases we refer to the net
interval of level $n$ containing $x$ by $\Delta_{n}(x)$, choosing
arbitrarily when it is not unique. Each net interval $\Delta $ of level $n$
is contained in a unique net interval of level $n-1$ which we refer to as
the parent of $\Delta $.

Given a net interval $\Delta $ of level $n$, let 
\begin{equation}  \label{eq:netIntervals}
P_{n}(\Delta )=\sum_{\substack{ u\in \Lambda_{n}  \\ S_{u }[0,1]\supseteq
\Delta }}p_{u}.
\end{equation}%
Note that $P_{n}(\Delta )\geq \min p_{j}^{n}$. Since $P_n(\Delta)$ is the
sum of all weights of words whose images cover the net interval $\Delta$, we
must have $P_{n}(\Delta_{n}(x))\geq \mu (\Delta_{n}(x))$. As $%
l(\Delta_{n}(x))\leq \lambda ^{n}$, the ball $B(x,\lambda ^{n})$ contains $%
\Delta_{n}(x)$. In particular, if $u\in \Lambda_{n}$ and $x\in S_{u}([0,1])$%
, then $S_{u }([0,1])\subseteq B(x,\lambda ^{n})$. Thus, 
\begin{equation}
\mu (B(x,\lambda ^{n}))\geq P_{n}(\Delta_{n}(x))\geq \mu (\Delta_{n}(x)).
\label{PnLower}
\end{equation}

To compare the minimal and maximal contraction rate we define $\Theta\in\N$
implicitly as the least integer satisfying 
\begin{equation}  \label{eq:ThetaDefinition}
\left( \max_{j\in\Lambda}r_j \right)^{\Theta+1}<\lambda^2 = \left(
\min_{j\in\Lambda}r_j \right)^2.
\end{equation}

We collect some further properties of $P_{n}(\Delta )$ below.

\begin{lemma}
\label{PnRel}Let $\Theta$ be as above. Then

\begin{enumerate}
\item[(a)] $P_{n}(\Delta_{n}(x))\geq \min
p_{j}^{\Theta}P_{n-1}(\Delta_{n-1}(x))$ and

\item[(b)] $P_{n}(\Delta_{n}(x))\leq P_{n-1}(\Delta_{n-1}(x))$.
\end{enumerate}
\end{lemma}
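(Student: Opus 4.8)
The plan is to prove both inequalities by comparing the words of $\Lambda_n$ with those of $\Lambda_{n-1}$ through the prefix structure of words, the organising fact being that every $u\in\Lambda_n$ has a \emph{unique} prefix $\widehat u\in\Lambda_{n-1}$: along the chain of prefixes of $u$ the ratios $1=r_{u^{(0)}}>r_{u^{(1)}}>\cdots>r_u$ are strictly decreasing and $r_u\le\lambda^n\le\lambda^{n-1}$, so exactly one prefix $\widehat u$ satisfies $r_{\widehat u}\le\lambda^{n-1}<r_{(\widehat u)^-}$. Before starting I would fix, once and for all, a nested sequence of net intervals $[0,1]=\Delta_0(x)\supseteq\Delta_1(x)\supseteq\cdots$ containing $x$; such a choice is always available, since every net interval of level $n$ lies in a unique net interval of level $n-1$ and $\Delta_{n-1}(x)$ is covered by level-$n$ net intervals, one of which contains $x$. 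This makes the geometric inclusions below unambiguous even when $x$ is an endpoint of a net interval.

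For part (b) I would group the sum defining $P_n(\Delta_n(x))$ according to the value of $\widehat u$. If $S_u([0,1])\supseteq\Delta_n(x)$ then, $\widehat u$ being a prefix of $u$, $S_{\widehat u}([0,1])\supseteq S_u([0,1])\supseteq\Delta_n(x)\ni x$, hence $S_{\widehat u}([0,1])\supseteq\Delta_{n-1}(x)$; thus every $\widehat u$ that occurs is one of the words $v\in\Lambda_{n-1}$ counted by $P_{n-1}(\Delta_{n-1}(x))$, and it suffices to show $\sum_{\widehat u=v}p_u\le p_v$ for each fixed $v$. Writing $u=vw$, one has $vw\in\Lambda_n$ exactly when $r_w\le\lambda^n/r_v<r_{w^-}$, and since $r_v>\lambda^n$ (as $r_v\ge\lambda\,r_{v^-}>\lambda^n$, or $v$ is empty) this threshold is $<1$, so the admissible suffixes $w$ form a complete prefix-free set (every infinite word has exactly one such prefix); therefore $\sum_{\widehat u=v}p_u=p_v\sum_w p_w=p_v$. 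Restricting the inner sum to those $u$ with $S_u([0,1])\supseteq\Delta_n(x)$ only decreases it, and summing over $v$ gives $P_n(\Delta_n(x))\le P_{n-1}(\Delta_{n-1}(x))$.

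For part (a) I would run this construction in reverse. Given $v\in\Lambda_{n-1}$ with $S_v([0,1])\supseteq\Delta_{n-1}(x)$, so $x\in S_v([0,1])$, descend towards $x$: pick letters $j_1,j_2,\dots$ with $x\in S_{vj_1\cdots j_i}([0,1])$ for every $i$ (possible because $[0,1]=\bigcup_j S_j([0,1])$) and stop at the first $k$ with $r_{vj_1\cdots j_k}\le\lambda^n$. Then $u(v):=vj_1\cdots j_k$ lies in $\Lambda_n$, contains $x$ so $S_{u(v)}([0,1])\supseteq\Delta_n(x)$, and $v=\widehat{u(v)}$, so $v\mapsto u(v)$ is injective into the words counted by $P_n(\Delta_n(x))$. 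The one quantitative input needed is $p_{u(v)}=p_v\,p_{j_1}\cdots p_{j_k}\ge p_v(\min_j p_j)^{\Theta}$, that is, $k\le\Theta$: from the stopping rule $r_{j_1}\cdots r_{j_{k-1}}=r_{u(v)^-}/r_v>\lambda^n/r_v\ge\lambda$ (using $r_v\le\lambda^{n-1}$), hence $r_{j_1}\cdots r_{j_k}>\lambda^2$, while $r_{j_i}\le\max_j r_j$ for each $i$ forces $(\max_j r_j)^k>\lambda^2>(\max_j r_j)^{\Theta+1}$ by the definition of $\Theta$, whence $k\le\Theta$. Summing, $P_{n-1}(\Delta_{n-1}(x))=\sum_v p_v\le(\min_j p_j)^{-\Theta}\sum_v p_{u(v)}\le(\min_j p_j)^{-\Theta}P_n(\Delta_n(x))$, which rearranges to (a).

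The routine parts are the prefix-tree bookkeeping and the geometric inclusions (made harmless by the nested choice of net intervals); the only step carrying real content is the bound $k\le\Theta$ in part (a), which is precisely what the constant $\Theta$ was introduced to guarantee, and I would treat establishing it cleanly as the crux of the proof.
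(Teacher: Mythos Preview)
Your proof is correct and follows essentially the same approach as the paper's: both parts proceed by relating each word in $\Lambda_n$ to its unique prefix in $\Lambda_{n-1}$, with the crux of (a) being the bound $|w|\le\Theta$ on the suffix length obtained from $r_{w^-}>\lambda$, exactly as you derive it. Your version is more detailed---you spell out the injectivity of $v\mapsto u(v)$, justify the nested choice of net intervals to handle boundary points, and in (b) you actually prove the stronger fact $\sum_w p_w=1$ via the complete prefix-free structure, whereas the paper is content with $\sum_w p_w\le 1$---but the underlying argument is the same.
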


\begin{proof}
(a) Let $u\in \Lambda_{n-1}$ and suppose $S_{u}[0,1]\supseteq
\Delta_{n-1}(x) $. Then there exists some word $w$ such that $u w\in
\Lambda_{n}$ and $S_{uw}[0,1]\supseteq \Delta _{n}(x) $. Note that $%
r_{w^{-}}>\lambda$ and so $r_{w}\geq \lambda ^{2}$. Using the definition of $%
\Theta$ we find that $\left\vert w\right\vert \leq \Theta$. As $p_{u}=p_{u
w}p_{w}^{-1}$, we have 
\begin{equation*}
P_{n-1}(\Delta_{n-1}(x))\leq P_{n}(\Delta_{n}(x))\left( \min p_{w }\right)
^{-1}\leq P_{n}(\Delta_{n}(x))(\min p_{j}^{\Theta})^{-1}\text{.}
\end{equation*}

(b) Suppose $v\in \Lambda_{n}$ and $S_{v}[0,1]\supseteq \Delta _{n}(x) $.
Then $v=u w$ where $u \in \Lambda_{n-1}$ and $S_{u }[0,1]\supseteq
\Delta_{n-1}(x)$. Furthermore, the sum of $p_{w}$ taken over such $w$ is at
most one. Thus 
\begin{equation*}
P_{n}(\Delta_{n}(x))=\sum_{\substack{ u w\in \Lambda_{n},u \in \Lambda_{n-1} 
\\ S_{u w}[0,1]\supseteq \Delta_{n}(x)}}p_{u }p_{w}\leq
P_{n-1}(\Delta_{n-1}(x)).\qedhere
\end{equation*}
\end{proof}

Now suppose the iterated function system satisfies the asymptotic gap weak
separation condition with function $f(n)$. For each $n$, choose the minimal
integer $\kappa_{n}$ such that $\lambda ^{\kappa_{n}}\leq f(n)$. By
definition, 
\begin{equation*}
\kappa_{n}\leq \frac{\log f(n)}{\log \lambda }+1
\end{equation*}%
and therefore $\kappa_{n}/n\rightarrow 0$. As $\lambda ^{n+\kappa_{n}}\leq
f(n)\lambda ^{n}$, 
\begin{equation*}
B(x,\lambda ^{n+\kappa_{n}})\subseteq B(x,f(n)\lambda ^{n}).
\end{equation*}
If $u \in \Lambda _{n+\kappa_{n}}$, then $r_{u }\leq \lambda ^{n+\kappa_{n}}$%
. If also $S_{u }[0,1]\supseteq \Delta_{n+\kappa _{n}}(x)$, then $S_{u
}[0,1]\subseteq B(x,f(n)\lambda ^{n})$, and so by \eqref{eq:netIntervals}, 
\begin{equation}
\mu (B(x,f(n)\lambda ^{n}))\geq P_{n+\kappa_{n}}(\Delta_{n+\kappa _{n}}(x)).
\label{PnUpper}
\end{equation}%
If $u \in \Lambda_{n}$ and $S_{u }(0)=0$, then $S_{u}(1)=r_{u}\geq \lambda
^{n+1}\geq f(n)\lambda _{n}$, hence $l(\Delta_{n}(0))\geq f(n)\lambda ^{n}$.
Consequently, 
\begin{equation}
B(0,f(n)\lambda ^{n})\cap \lbrack 0,1]\subseteq \Delta_{n}(0)\subseteq
B(0,\lambda ^{n}).  \label{Delta0}
\end{equation}

Combining these observations with the previous lemma gives the following
bounds.

\begin{proposition}
\label{prop:theAbound} \label{bounds}Suppose $\mu$ is a self-similar measure
that satisfies the asymptotic gap weak separation condition with function $%
f(n)$. There is a constant $0<A<1$ such that for any $x,n$%
\begin{equation*}
A^{\kappa_{n}}P_{n}(\Delta_{n}(x))\leq \mu (B(x,f(n)\lambda ^{n})).
\end{equation*}%
In particular, 
\begin{equation*}
A^{\kappa_{n}}P_{n}(\Delta_{n}(0))\leq \mu (B(0,f(n)\lambda ^{n}))\leq \mu
(\Delta_{n}(0))\leq P_{n}(\Delta_{n}(0)).
\end{equation*}
\end{proposition}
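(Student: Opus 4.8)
The plan is to obtain this purely by combining three facts already in hand: the ball lower bound \eqref{PnUpper}, the one-step descent estimate in Lemma~\ref{PnRel}(a), and the two-sided enclosure \eqref{Delta0} of $\Delta_n(0)$ by balls centred at the origin.

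First I would iterate Lemma~\ref{PnRel}(a). Applying it at levels $n+1,n+2,\dots,n+\kappa_n$ in turn gives
\[
P_{n+\kappa_n}(\Delta_{n+\kappa_n}(x)) \;\geq\; \bigl(\min_j p_j^{\Theta}\bigr)^{\kappa_n}\,P_n(\Delta_n(x)),
\]
where the factor lost at each step is the same constant $\min_j p_j^{\Theta}$, independent of the level and of $x$. Set $A=\min_j p_j^{\Theta}$. Since the attractor is $[0,1]$ we have $m\geq 2$, so $0<\min_j p_j<1$, and $\Theta\geq 1$ gives $0<A<1$; moreover $\kappa_n$ depends only on $n$ (through $f$), so $A^{\kappa_n}$ is an admissible factor. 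Feeding the displayed inequality into \eqref{PnUpper} yields
\[
\mu(B(x,f(n)\lambda^n)) \;\geq\; P_{n+\kappa_n}(\Delta_{n+\kappa_n}(x)) \;\geq\; A^{\kappa_n}P_n(\Delta_n(x)),
\]
which is the first assertion of the proposition.

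For the ``in particular'' chain I would specialise to $x=0$. The leftmost inequality is exactly the case $x=0$ of what was just proved. For the middle inequality, $\supp\mu=[0,1]$, so by the first inclusion in \eqref{Delta0} we get $\mu(B(0,f(n)\lambda^n))=\mu\bigl(B(0,f(n)\lambda^n)\cap[0,1]\bigr)\leq\mu(\Delta_n(0))$. The final inequality $\mu(\Delta_n(0))\leq P_n(\Delta_n(0))$ is the general observation $\mu(\Delta_n(x))\leq P_n(\Delta_n(x))$ recorded just after \eqref{eq:netIntervals}.

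The whole argument is essentially bookkeeping with inequalities that are already available; the only place to be a touch careful is the iteration of Lemma~\ref{PnRel}(a) over the $\kappa_n$ levels from $n$ up to $n+\kappa_n$, and checking that the resulting uniform constant $A$ really is strictly less than $1$. I do not anticipate any genuine obstacle here.
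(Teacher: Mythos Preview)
Your proposal is correct and follows essentially the same route as the paper: iterate Lemma~\ref{PnRel}(a) $\kappa_n$ times with $A=\min_j p_j^{\Theta}$, combine with \eqref{PnUpper} for the first statement, and then use \eqref{Delta0} together with the inequality $\mu(\Delta_n(x))\leq P_n(\Delta_n(x))$ (i.e.\ the right half of \eqref{PnLower}) for the chain at $x=0$. Your added verifications that $m\geq 2$ forces $0<A<1$ and that $\kappa_n$ depends only on $n$ are reasonable extra care but not points of divergence.
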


\begin{proof}
Lemma \ref{PnRel}(a) implies $P_{n+\kappa_{n}}(\Delta_{n+\kappa
_{n}}(x))\geq A^{\kappa_{n}}P_{n}(\Delta_{n}(x))$ for $A=\min p_{j}^{\Theta}$%
. This fact, coupled with (\ref{PnUpper}), gives the first statement. The
second statement follows similarly from (\ref{PnLower}) and (\ref{Delta0}).
\end{proof}

Let $s,t$ be 
\begin{equation}
s=\liminf \left( P_{n}(\Delta_{n}(0)\right) ^{1/n}\quad\text{and}\quad
t=\liminf \left( P_{n}(\Delta_{n}(1)\right) ^{1/n}.  \label{s,t}
\end{equation}%
Note that $s,t>0$ since $P_{n}(\Delta_{n}(x))\geq \left(
\min_{j}p_{j}\right) ^{n}$ for all $x,n.$

\begin{proposition}
\label{LocDim0}Suppose $\mu$ is a self-similar measure that satisfies the
asymptotic gap weak separation condition. Then 
\begin{equation*}
\overline{\dim}_{\loc}\,\mu (0)=\frac{\log s}{\log \lambda }\quad\text{ and }%
\quad \overline{\dim}_{\loc}\,\mu (1)=\frac{\log t}{\log \lambda }.
\end{equation*}
Furthermore, $s,t<1$.
\end{proposition}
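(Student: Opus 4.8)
\emph{Proof sketch.} The plan is to work at the geometric scales $r=\lambda^{n}$ and transfer the problem to the quantities $P_{n}(\Delta_{n}(0))$ via the estimates of Section~\ref{sect:IFSs}. By monotonicity of $r\mapsto\mu(B(0,r))$ and because consecutive scales $\lambda^{n+1},\lambda^{n}$ differ by the fixed factor $\lambda$, one has the standard reduction
\[
\overline{\dim}_{\loc}\,\mu(0)=\limsup_{n\to\infty}\frac{\log\mu(B(0,\lambda^{n}))}{n\log\lambda},
\]
so it suffices to evaluate this limit superior. Since $\min_{j}p_{j}^{\,n}\le P_{n}(\Delta_{n}(0))\le 1$ (the lower bound is the remark after \eqref{eq:netIntervals}; the upper bound because $P_{n}(\Delta_{n}(0))$ is a partial sum of the masses $p_{u}$, $u\in\Lambda_{n}$, which total $1$), the numbers $P_{n}(\Delta_{n}(0))^{1/n}$ lie in a fixed compact subinterval of $(0,1]$, so by continuity of $\log$ and $\log\lambda<0$,
\[
\limsup_{n\to\infty}\frac{\log P_{n}(\Delta_{n}(0))}{n\log\lambda}=\frac{1}{\log\lambda}\,\liminf_{n\to\infty}\log\bigl(P_{n}(\Delta_{n}(0))^{1/n}\bigr)=\frac{\log s}{\log\lambda}.
\]
Hence the first identity will follow once $\mu(B(0,\lambda^{n}))$ is sandwiched between $P_{n}(\Delta_{n}(0))$ and $P_{m(n)}(\Delta_{m(n)}(0))$ for some sequence $m(n)$ with $m(n)/n\to 1$.

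The lower end is immediate: \eqref{PnLower} gives $\mu(B(0,\lambda^{n}))\ge P_{n}(\Delta_{n}(0))$, and dividing the logarithm by $n\log\lambda<0$ already yields $\overline{\dim}_{\loc}\,\mu(0)\le\log s/\log\lambda$. For the reverse inequality we need an \emph{upper} bound on $\mu(B(0,\lambda^{n}))$, and this is the crux of the argument; it is where the asymptotic gap weak separation condition enters, via \eqref{Delta0}: the leftmost net interval of level $m$ is $\Delta_{m}(0)=[0,l(\Delta_{m}(0))]$ with $l(\Delta_{m}(0))\ge f(m)\lambda^{m}$. Let $m(n)$ be the largest integer $m$ with $m+(\log f(m))/\log\lambda\le n$; this is well defined for large $n$ because $m\mapsto m+(\log f(m))/\log\lambda$ is non-decreasing ($f$ non-increasing, $\log\lambda<0$) and tends to $\infty$. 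Then $f(m(n))\lambda^{m(n)}\ge\lambda^{n}$, so $[0,\lambda^{n}]\subseteq\Delta_{m(n)}(0)$, and since $\supp\mu\subseteq[0,1]$, Proposition~\ref{bounds} gives
\[
\mu(B(0,\lambda^{n}))\le\mu(\Delta_{m(n)}(0))\le P_{m(n)}(\Delta_{m(n)}(0)).
\]
One then checks the routine facts that $m(n)\le n$, that $m(n)/n\to 1$ (because $(\log f(m))/m\to 0$), and that $n\mapsto m(n)$ is non-decreasing with jumps of size at most $1$, hence eventually attains every integer value; combined with boundedness of $\tfrac1k\log P_{k}(\Delta_{k}(0))$ these give
\[
\limsup_{n\to\infty}\frac{\log P_{m(n)}(\Delta_{m(n)}(0))}{n\log\lambda}=\limsup_{n\to\infty}\frac{m(n)}{n}\cdot\frac{\log P_{m(n)}(\Delta_{m(n)}(0))}{m(n)\log\lambda}=\limsup_{k\to\infty}\frac{\log P_{k}(\Delta_{k}(0))}{k\log\lambda}=\frac{\log s}{\log\lambda}.
\]
Squeezing $\mu(B(0,\lambda^{n}))$ between the two $P$-quantities and passing to $\limsup$ now yields $\overline{\dim}_{\loc}\,\mu(0)=\log s/\log\lambda$. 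The formula for $\mu(1)$ is obtained identically, replacing $\Delta_{n}(0)$ by $\Delta_{n}(1)$ and using the right-endpoint analogue of \eqref{Delta0} (valid since $S_{u}(1)=1$ for a suitable $u\in\Lambda_{n}$).

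It remains to prove $s,t<1$, for which a crude bound on $P_{n}(\Delta_{n}(0))$ suffices. If $u\in\Lambda_{n}$ contributes to $P_{n}(\Delta_{n}(0))$, then $S_{u}([0,1])\supseteq\Delta_{n}(0)$; as $S_{u}([0,1])\subseteq[0,1]$ and $0$ is the left endpoint of $\Delta_{n}(0)$, this forces $S_{u}(0)=0$, i.e.\ every letter of $u$ lies in $J_{0}:=\{j:S_{j}(0)=0\}$. From $r_{u}\le\lambda^{n}$ and $r_{j}\ge\lambda$ we get $|u|\ge n$, so $u$ has a length-$n$ prefix $v\in J_{0}^{\,n}$; for fixed $v$ the words $u\in\Lambda_{n}$ with prefix $v$ are of the form $vw$ with $w$ ranging over a prefix-free set, whence $\sum_{u\in\Lambda_{n},\,u|_{n}=v}p_{u}\le p_{v}$. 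Therefore $P_{n}(\Delta_{n}(0))\le\sum_{v\in J_{0}^{\,n}}p_{v}=\bigl(\sum_{j\in J_{0}}p_{j}\bigr)^{n}$. Since $S_{m-1}(1)=1$ with $r_{m-1}<1$, we have $S_{m-1}(0)=1-r_{m-1}>0$, so $m-1\notin J_{0}$ and $\sum_{j\in J_{0}}p_{j}\le 1-p_{m-1}<1$, giving $s<1$; symmetrically, $S_{0}(0)=0$ and $r_{0}<1$ force $S_{0}(1)=r_{0}\ne 1$, so $0$ is excluded from the corresponding index set and $t<1$. As noted, the main obstacle in the whole argument is the upper bound on $\mu(B(0,\lambda^{n}))$: confining $[0,\lambda^{n}]$ to a single net interval containing $0$ at a level $m(n)$ with $m(n)/n\to 1$ is precisely what fails without the lower bound $l(\Delta_{m}(0))\ge f(m)\lambda^{m}$ supplied by the separation hypothesis.
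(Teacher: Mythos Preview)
Your argument is correct and follows the same overall strategy as the paper: sandwich $\mu(B(0,\lambda^{n}))$ between $P$-quantities at levels differing from $n$ by a sublinear amount, using \eqref{PnLower} for the easy direction and the inclusion \eqref{Delta0} (which encodes the AGWSC) for the harder upper bound.

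The only real difference is in bookkeeping. The paper invokes Proposition~\ref{prop:theAbound} and Lemma~\ref{PnRel}(a) to obtain the two-sided estimate
\[
A^{\kappa_{n}}P_{n}(\Delta_{n}(0))\;\le\;\mu(B(0,\lambda^{n}))\;\le\;P_{n-\kappa_{n}}(\Delta_{n-\kappa_{n}}(0))\;\le\;A^{-\kappa_{n}}P_{n}(\Delta_{n}(0)),
\]
and then simply uses $\kappa_{n}/n\to 0$; this avoids any discussion of surjectivity of a reindexing. You instead construct the inverse sequence $m(n)$ directly and verify that it is non-decreasing with unit jumps (which follows from $g(m+1)-g(m)\ge 1$, since $f$ is non-increasing), so that the $\limsup$ along $m(n)$ recovers the full $\limsup$. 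Both devices do the same job; the paper's is shorter because the comparison constant $A$ has already been set up, while yours is more self-contained.

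For $s,t<1$ the paper argues recursively, showing $P_{n}(\Delta_{n}(0))\le(1-p_{j_{0}})P_{n-1}(\Delta_{n-1}(0))$ for some $j_{0}$ with $S_{j_{0}}(0)\neq 0$, whereas you give a direct prefix-counting bound $P_{n}(\Delta_{n}(0))\le\bigl(\sum_{j\in J_{0}}p_{j}\bigr)^{n}$. These yield the same exponential decay; your version is arguably cleaner since it identifies the full obstruction set $J_{0}$ at once.
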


\begin{proof}
Our earlier observations show that 
\begin{eqnarray*}
B(0,\lambda_{n}^{n+\kappa_{n}})\cap \lbrack 0,1] &\subseteq &B(0,f(n)\lambda
^{n})\cap \lbrack 0,1]\quad \subseteq\quad \Delta_{n}(0)\quad\subseteq\quad
B(0,\lambda ^{n}) \\
&\subseteq &B(0,f(n-\kappa_{n})\lambda ^{n-\kappa_{n}})\cap \lbrack
0,1]\quad\subseteq\quad \Delta_{n-\kappa_{n}}(0),
\end{eqnarray*}%
and by Proposition~\ref{prop:theAbound}, 
\begin{eqnarray*}
A^{\kappa_{n}}P_{n}(\Delta_{n}(0)) &\leq &\mu (B(0,\lambda
^{n}))\quad\leq\quad \mu (\Delta_{n-\kappa_{n}}(0)) \\
&\leq &P_{n-\kappa_{n}}(\Delta_{n-\kappa_{n}}(0))\quad\leq\quad A^{-\kappa
_{n}}P_{n}(\Delta_{n}(0)).
\end{eqnarray*}

Since $\kappa_{n}/n\rightarrow 0$ as $n\to\infty$, 
\begin{align*}
\limsup_{n\to\infty}\frac{\log(P_n(\Delta_n(0)))}{n\log\lambda} \leq&
\limsup_{n\to\infty}\frac{-\kappa_n \log A+\log (P_{n}(\Delta_{n}(0)))}{%
n\log \lambda } \leq \limsup_{n\to\infty}\frac{\log (\mu (B(0,\lambda ^{n})))%
}{n\log \lambda } \\
=\; \overline{\dim}_{\loc}\,\mu(0)\; \leq& \limsup_{n\to\infty}\frac{%
\kappa_n\log A+\log (P_{n}(\Delta_{n}(0)))}{n\log \lambda } \leq
\limsup_{n\to\infty}\frac{\log(P_n(\Delta_n(0)))}{n\log\lambda}.
\end{align*}%
This proves the first equality. The second equality follows similarly, and
is omitted for brevity.

To prove that $s<1$, note that there must be at least one index $j$ such
that $S_{j}(0)\neq 0$. Without loss of generality assume the index is $j_{0}$%
. Thus, if $\tau\in \Lambda_{n}$ with $S_{\tau}[0,1]\supseteq \Delta _{n}(0)$%
, and $\tau=u w$ where $u\in \Lambda_{n-1}$ and $S_{u}[0,1]\supseteq
\Delta_{n-1}(0)$, then $w$ does not contain the letter $j_{0}$. It follows
that 
\begin{equation*}
P_{n}(\Delta_{n}(0))\leq (1-p_{j_{0}})\hspace{-1.3em}\sum_{\substack{ u \in
\Lambda _{n-1}  \\ S_{u}[0,1]\supseteq \Delta_{n-1}}}\hspace{-1.2em}p_{u
}=(1-p_{j_{0}})P_{n-1}(\Delta_{n-1}(0))
\end{equation*}%
and hence $P_{n}(\Delta_{n}(0))\leq (1-p_{j_{0}})^{n}$. Thus $s\leq
1-p_{j_{0}}<1$. The case for $t<1$ follows along the same lines and is
omitted.
\end{proof}

\section{Measures with finite quasi-Assouad dimension}

\label{sect:finite}

In \cite[Proposition 3.1]{FraserHowroyd} it was proven that the Assouad
dimension of a measure is finite if and only if the measure is doubling.
This is not required for finite quasi-Assouad dimension, as is shown in
Example~\ref{NotDoubling}.

In this section we characterize the measures with finite quasi-Assouad
dimension that satisfy the asymptotic gap weak separation condition in terms
of a weak doubling-like condition. This characterization is stated in
Corollary \ref{char}.

\begin{definition}
A compactly supported Borel probability measure $\mu $ is \textbf{%
quasi-doubling} if for every $\varepsilon >0$ and positive, non-decreasing
function $G(r)$ satisfying $\log G(r)/\log r\rightarrow 0$ as $r\rightarrow
0 $, there exists $c>0$ such that%
\begin{equation}
\mu (B(x,G(r)r))\geq c\,r^{\varepsilon }\mu (B(x,2r))  \label{qdoublingdef}
\end{equation}%
for all $x\in \supp\mu $ and $r\in (0,1).$
\end{definition}

By considering $r=\lambda ^{n}$ for any (fixed) $\lambda <1$ and $%
g(n)=G(\lambda ^{n}),$ one can check that this definition is equivalent to
the statement that $\mu $ is quasi-doubling if and only if for every $q>1,$
constant $b>0$ and positive, non-increasing sequence $(g(n))_{n}$ satisfying 
$\log g(n)/n\rightarrow 0$, there exists $c>0$ such that 
\begin{equation}
\mu (B(x,g(n)\lambda ^{n}))\geq c\,q^{-n}\mu (B(x,b\lambda ^{n})),
\label{finiteqA}
\end{equation}%
for all $x\in \supp\mu $ and $n\in \N$. We will often use the definition of
quasi-doubling in this formulation, or for a particular choice of $b,$ which
is also equivalent.

We first check that doubling implies quasi-doubling and that quasi-doubling
is necessary for finite quasi-Assouad dimension.

\begin{proposition}
\label{doubling}(i) If $\mu $ is a doubling measure, then $\mu $ is
quasi-doubling.

(ii) If $\dim _{qA}\mu <\infty ,$ then $\mu $ is quasi-doubling.
\end{proposition}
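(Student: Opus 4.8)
The plan is to prove both parts directly from the definitions, using the reformulation \eqref{finiteqA} of quasi-doubling throughout. For part (i), suppose $\mu$ is doubling, so there is $c_0>0$ with $\mu(B(x,2r))\le c_0^{-1}\mu(B(x,r))$ for all $x\in\supp\mu$ and $r\in(0,1)$; iterating, $\mu(B(x,2^k r))\le c_0^{-k}\mu(B(x,r))$. Fix $q>1$, $b>0$, and a non-increasing sequence $(g(n))$ with $\log g(n)/n\to0$. I would compare $B(x,g(n)\lambda^n)$ and $B(x,b\lambda^n)$ by a chain of radius-doublings: since $g(n)\to 0$ on a subexponential scale and $b$ is fixed, the number of doublings $k_n$ needed to pass from $g(n)\lambda^n$ up to (at least) $b\lambda^n$ is at most $\log_2(b/g(n)) + O(1)$, which is $o(n)$ because $|\log g(n)|=o(n)$. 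Hence $\mu(B(x,b\lambda^n))\le \mu(B(x,2^{k_n}g(n)\lambda^n))\le c_0^{-k_n}\mu(B(x,g(n)\lambda^n))$, and since $k_n=o(n)$ we have $c_0^{-k_n}\le c\,q^{n}$ for a suitable $c>0$ and all $n$ (as $q>1$). Rearranging gives \eqref{finiteqA}, so $\mu$ is quasi-doubling.

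For part (ii), assume $\dim_{qA}\mu=:s<\infty$. Fix $\varepsilon>0$. By definition of the quasi-Assouad dimension there is $\delta>0$ with $H(\delta)<s+1<\infty$, so there are constants $C_1,C_2>0$ such that
\begin{equation*}
\frac{\mu(B(x,R))}{\mu(B(x,r))}\le C_2\Big(\frac{R}{r}\Big)^{s+1}\qquad\text{whenever }0<r\le R^{1+\delta}\le R\le C_1,\ x\in\supp\mu.
\end{equation*}
Now fix a positive non-decreasing $G$ with $\log G(r)/\log r\to0$. Given small $r$, I want to bound $\mu(B(x,2r))/\mu(B(x,G(r)r))$. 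The obstruction is that $2r$ and $G(r)r$ are comparable up to the subexponential factor $G(r)$, so they are not separated on the exponential scale $R$ versus $R^{1+\delta}$ required to apply the inequality directly. I would circumvent this exactly as in the proof of Proposition~\ref{prop:boundedBylocDim}: choose an auxiliary large radius $\rho=\rho(r)$ with $\rho\le C_1$ and such that $G(r)r\le \rho^{1+\delta}$ and also $2r\le\rho^{1+\delta}$ — one can take $\rho$ a fixed small power of $r$, say $\rho=r^{\theta}$ for $\theta<1$ chosen so $(1+\delta)\theta<1$, noting that then $\rho^{1+\delta}\ge r$ eventually dominates $2r$ and $G(r)r$ since $G(r)=r^{o(1)}$. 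Then apply the inequality twice, to the pairs $(G(r)r,\rho)$ and $(2r,\rho)$ — actually it suffices to apply it once to $(G(r)r,\rho)$ and use monotonicity $\mu(B(x,2r))\le\mu(B(x,\rho))$ — to get
\begin{equation*}
\frac{\mu(B(x,2r))}{\mu(B(x,G(r)r))}\le \frac{\mu(B(x,\rho))}{\mu(B(x,G(r)r))}\le C_2\Big(\frac{\rho}{G(r)r}\Big)^{s+1}\le C_2\,\rho^{s+1}(G(r)r)^{-(s+1)}.
\end{equation*}
Since $\rho=r^\theta$ and $G(r)=r^{o(1)}$, the right side is bounded by $C\,r^{-(s+1)(1-\theta)+o(1)}$, hence by $C'r^{-\varepsilon}$ once $\theta$ is taken close enough to $1$ (so that $(s+1)(1-\theta)<\varepsilon/2$) and $r$ is small; small $r$ is all that matters since for $r$ bounded below one absorbs the finitely many cases into the constant, using that $\mu(B(x,G(r)r))$ is bounded below there. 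This yields \eqref{qdoublingdef}, so $\mu$ is quasi-doubling.

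The main obstacle in both parts is the same mismatch: the quasi-doubling condition compares two radii that differ only by a subexponential factor, whereas the hypotheses (doubling; finite $\dim_{qA}$) naturally control ratios across either a single doubling or an exponential gap. In (i) this is handled by chaining $o(n)$ doublings and absorbing the resulting constant $c_0^{-o(n)}$ into the slack $q^{-n}$; in (ii) it is handled by inserting an intermediate radius $\rho$ that is a genuine power of $r$, which restores the exponential gap needed to invoke the $\dim_{qA}$ inequality, at the cost of a loss that is made smaller than $\varepsilon$ by taking $\rho$ close to $r$. Care is needed only to confirm that $\log G(r)/\log r\to0$ (equivalently $\log g(n)/n\to0$) genuinely makes these losses $o(n)$ respectively $r^{o(1)}$, which is immediate, and to handle the regime of $r$ bounded away from $0$ separately by compactness.
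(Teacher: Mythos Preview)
Your approach is essentially the same as the paper's in both parts: iterate doubling $o(n)$ times in (i), and in (ii) insert an auxiliary radius to create the exponential gap needed to invoke the quasi-Assouad inequality.

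There is, however, a quantifier slip in your part (ii). You fix $\varepsilon>0$, then pick \emph{some} $\delta>0$ with $H(\delta)<s+1$, and only later demand that $\theta$ be ``close enough to $1$ so that $(s+1)(1-\theta)<\varepsilon/2$''. But you also need $(1+\delta)\theta<1$, i.e.\ $\theta<1/(1+\delta)$, so $1-\theta>\delta/(1+\delta)$ is bounded away from zero by a quantity depending only on $\delta$. If $\delta$ was chosen without reference to $\varepsilon$, these two constraints on $\theta$ can be incompatible when $\varepsilon$ is small. The fix is easy: since $H$ is non-increasing with $\lim_{\delta\to0}H(\delta)=s$, in fact $H(\delta)\le s<s+1$ for \emph{every} $\delta>0$, so you are free to take $\delta$ small enough (depending on $\varepsilon$) that $1/(1+\delta)>1-\varepsilon/(2(s+1))$, and then choose $\theta$ in between. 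The paper sidesteps this entirely by working in the $\lambda^n$-formulation and choosing $\delta$ explicitly via $\lambda^{-\delta t}=q$ (with $t>s$), so that the loss incurred is exactly $q^n$ rather than something to be tuned after the fact; this makes the dependence of $\delta$ on the target transparent from the outset. (Also, a minor point in (i): you write ``$g(n)\to0$ on a subexponential scale'', but $g$ need not tend to $0$; what you actually use, and what is true, is $|\log g(n)|=o(n)$.)
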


\begin{proof}
(i) Pick $\lambda <1$ assume the constant $C$ is chosen so that $\mu
(B(x,\lambda r))\geq C\mu (B(x,r))$ for all $x,r$. Fix $q>1$ and choose $%
\varepsilon >0$ so that $C^{\varepsilon }>q^{-1}$. Let $(g(n))_{n}$ be a
non-increasing sequence satisfying $\log g(n)/n\rightarrow 0$. As $\log
g(n)/n\rightarrow 0$ there is some $n_{0}$ such that $g(n)\geq \lambda
^{\varepsilon n}$ for all $n\geq n_{0}$. Since $g$ is non-increasing, $%
g(n)\geq g(n_{0})\geq \lambda ^{\varepsilon n_{0}}\geq A\lambda
^{\varepsilon n}$ for all $n\leq n_{0}$ and a suitable constant $A$. By
repeated application of the doubling property, 
\begin{equation*}
\mu (B(x,g(n)\lambda ^{n}))\geq \mu (B(x,A\lambda ^{\varepsilon n}\lambda
^{n}))\geq C^{n\varepsilon +1}\mu (B(x,A\lambda ^{n}))\geq Cq^{-n}\mu
(B(x,A\lambda ^{n}))
\end{equation*}%
and this gives (\ref{finiteqA}).

(ii) Suppose $\dim \sb{\,\mathrm{qA}}\,\mu <t<\infty $ and let $q>1>\lambda $%
. Select $\delta >0$ so that $\lambda ^{-\delta t}=q$. The definition of the
quasi-Assouad dimension ensures that for some $C>0$, 
\begin{equation}
\frac{\mu (B(x,\lambda ^{n}))}{\mu (B(x,\lambda ^{n}{}^{(1+\delta )}))}\leq
C\lambda ^{-n\delta t}=Cq^{n}.
\end{equation}%
Assume $\log g(n)/n\rightarrow 0$. Then $g(n)\geq \lambda ^{\delta n}$ for $%
n $ sufficiently large. Hence $B(x,\lambda ^{n}{}^{(1+\delta )})\subseteq
B(x,g(n)\lambda ^{n})$ and therefore 
\begin{equation*}
C^{-1}q^{-n}\,\mu (B(x,\lambda ^{n}))\leq \mu (B(x,\lambda ^{n}{}^{(1+\delta
)}))\leq \mu (B(x,g(n)\lambda ^{n}))
\end{equation*}%
for sufficiently large $n$ and this suffices to show $\mu $ is
quasi-doubling.
\end{proof}

Before stating our main result of this section, we introduce further
notation for self-similar measures $\mu $ with support $[0,1]$ and minimal
contraction factor $\lambda $.

Given a level $n$ net interval, $\Delta _{n}$, other than $\Delta _{n}(1)$
or $\Delta _{n}(0)$, we let $\Delta _{n}^{R}$ be the union of the two net
intervals of level $n$ immediately to the right of $\Delta _{n}$, and let $%
\Delta _{n}^{L}$ be the union of the two net intervals immediately to the
left, with the understanding that $\Delta _{n}^{R}=\Delta _{n}(1)$ if $%
\Delta _{n}$ is immediately adjacent to $\Delta _{n}(1)$ and similarly for $%
\Delta _{n}^{L}$. If $\Delta _{n}=\Delta _{n}(1)$ we only define $\Delta
_{n}^{L}$ and if $\Delta _{n}=\Delta _{n}(0)$ we only define $\Delta
_{n}^{R} $.

We remark that if the IFS associated with $\mu $ satisfies the asymptotic
gap weak separation condition with function $f(n)$, any net interval whose
endpoints are $S_{u}(0)$ and $S_{w}(0)$ for some $u,w\in \Lambda _{n}$ has
length at least $f(n)\lambda ^{n}$ and likewise if the endpoints are both
iterates of $1$. Consequently, the length of the union of any two adjacent
net intervals has length at least $f(n)\lambda ^{n}$. In particular, this is
true for $\Delta _{n}^{R}$ and $\Delta _{n}^{L}$ since $\Delta _{n}(0)$ and $%
\Delta _{n}(1)$ also have length at least $f(n)\lambda ^{n}$.

\begin{definition}
\label{def:netDoubl} A self-similar measure $\mu $ that satisfies the
asymptotic gap weak separation condition with function $f(n)$ is \textbf{%
quasi-net doubling} if for every $q>1$ there exist $c_{1},c_{2}>0$ such that
if $l(\Delta _{n})\geq f(n+1)\lambda ^{n+1}$, then 
\begin{equation}
c_{1}q^{-n}\mu (\Delta _{n}^{\ast })\leq \mu (\Delta _{n})\leq c_{2}q^{n}\mu
(\Delta _{n}^{\ast })  \label{qdoubling}
\end{equation}%
for $\Delta _{n}^{\ast }=\Delta _{n}^{L}$ and $\Delta _{n}^{\ast }=\Delta
_{n}^{R}$, where defined.
\end{definition}

\begin{theorem}
\label{qdimpliesqA}Suppose the self-similar measure $\mu$ has support $[0,1]$
and satisfies the asymptotic gap weak separation condition. Then $\dim%
\sb{\,\mathrm{qA}}\,\mu<\infty$ if and only if $\mu$ is quasi-net doubling.
\end{theorem}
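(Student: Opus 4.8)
The plan is to translate the quasi-doubling condition \eqref{finiteqA} (which, by Proposition~\ref{doubling}(ii), is necessary for finite quasi-Assouad dimension) and the converse implication into statements about net intervals, using the comparison estimates from Section~\ref{sect:IFSs} that relate $\mu(B(x,r))$ to the quantities $P_n(\Delta_n(x))$ and $\mu(\Delta_n(x))$. For the forward direction, assume $\dim_{qA}\mu=:t<\infty$. Proposition~\ref{doubling}(ii) gives that $\mu$ is quasi-doubling, so for every $q>1$ and any admissible sequence $g(n)$ we have $\mu(B(x,g(n)\lambda^n))\geq c\,q^{-n}\mu(B(x,b\lambda^n))$. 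I would apply this with $g(n)=f(n)$ (the AGWSC function), together with Proposition~\ref{prop:theAbound} — which yields $A^{\kappa_n}P_n(\Delta_n(x))\leq \mu(B(x,f(n)\lambda^n))$ and $\mu(B(x,\lambda^n))\geq P_n(\Delta_n(x))\geq\mu(\Delta_n(x))$ — and the fact that $\kappa_n/n\to0$, so that $A^{\kappa_n}=q^{-o(n)}$ gets absorbed into a modified $q^{n}$ factor. The upshot should be: for every $q>1$ there is $c>0$ with $\mu(\Delta_n(x))\geq c\,q^{-n}\mu(B(x,b\lambda^n))$ for all $x,n$. To get the net-doubling inequalities \eqref{qdoubling}, note that when $l(\Delta_n)\geq f(n+1)\lambda^{n+1}$, the adjacent unions $\Delta_n^L,\Delta_n^R$ have length $\geq f(n)\lambda^n$ (by the remark preceding Definition~\ref{def:netDoubl}), hence a ball of radius $\asymp\lambda^n$ centred at a point of $\Delta_n$ covers $\Delta_n^\ast$, so $\mu(B(x,b\lambda^n))\geq\mu(\Delta_n^\ast)$; combining with the displayed inequality gives $\mu(\Delta_n)\geq c_1q^{-n}\mu(\Delta_n^\ast)$, and the symmetric inequality follows by swapping the roles of $\Delta_n$ and one of its neighbours (which is itself a net interval of the same kind, or $\Delta_n(0)$/$\Delta_n(1)$).

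For the converse, assume $\mu$ is quasi-net doubling and fix $q>1$; I want to bound $H(\delta)$ for $\delta$ small. Given $x$ and $r\leq R^{1+\delta}\leq R$, write $R\asymp\lambda^m$ and $r\asymp\lambda^N$ with $N\geq(1+\delta)m$ roughly. The strategy is to bound $\mu(B(x,R))/\mu(B(x,r))$ by telescoping through net-interval levels from $m$ down to $N$. At each level, $B(x,c\lambda^k)$ is covered by $\Delta_k(x)$ together with its immediate neighbours $\Delta_k^L(x),\Delta_k^R(x)$; applying the quasi-net doubling inequality \eqref{qdoubling} at each level (when the relevant net interval is long enough — when it is not, one passes to a deeper level, which is where $\kappa_n$ and $f(n)$ enter) lets me compare $\mu(\Delta_k(x))$ to $\mu(\Delta_{k+1}(x))$ up to a factor $c_2q^k$. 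Meanwhile the lower bound $\mu(B(x,r))\gtrsim \mu(\Delta_N(x))\gtrsim A^{\kappa_N}P_N(\Delta_N(x))$ and Lemma~\ref{PnRel}(a) controls how fast $P_k(\Delta_k(x))$ can shrink level to level, namely by at most a factor $(\min p_j)^{\Theta}$. Putting these together, $\mu(B(x,R))/\mu(B(x,r))$ is bounded by $\prod_{k=N}^{m}(\text{bounded factor})\cdot q^{k}$, i.e.\ by something like $C\,q^{N^2}$, which is $C(R/r)^{s}$ for $s$ depending on $q$ and the contraction data but $\emph{not}$ blowing up as $\delta\to0$; hence $H(\delta)$ is bounded uniformly in small $\delta$ and $\dim_{qA}\mu<\infty$.

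The main obstacle, I expect, is the bookkeeping in the converse: turning the level-by-level net-interval comparisons into a clean bound on the ball ratio. Two issues need care. First, one must handle the levels $k$ at which $l(\Delta_k(x))<f(k+1)\lambda^{k+1}$, so that \eqref{qdoubling} does not directly apply — here one should skip ahead using the relation $B(x,f(k)\lambda^k)\subseteq$ a union of a bounded number of net intervals of level $k+\kappa_k$, absorbing the $A^{\kappa_k}$ and the subexponential growth of $f$ into the $q^k$-type error, exactly as in Proposition~\ref{prop:theAbound} and Proposition~\ref{LocDim0}. Second, one must be careful that a ball $B(x,R)$ is comparable to $\Delta_m(x)\cup\Delta_m^L(x)\cup\Delta_m^R(x)$ only from \emph{above} in measure, and use \eqref{PnLower}/\eqref{PnUpper} for the matching lower bounds, so that the telescoping is between like quantities. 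Once the \emph{single-step} estimate ``$\mu(\Delta_k(x))\leq c\,q^{k}\mu(\Delta_{k+1}(x))$ for all $k$'' is established (possibly after replacing $q$ by a slightly larger constant to swallow the $A^{\kappa_k}$ and neighbour-union factors), the product estimate and the conclusion $H(\delta)\leq s(q)$ are routine, and letting $q\downarrow1$ is not needed — finiteness for one fixed $q$ suffices.
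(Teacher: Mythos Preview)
Your forward direction is essentially the paper's argument: quasi-doubling (via Proposition~\ref{doubling}(ii)) applied with $g(n)=f(n+1)\lambda/2$ and $b=3$, together with the containments $B(z,f(n+1)\lambda^{n+1}/2)\subseteq\Delta_n$ and $\Delta_n^\ast\subseteq B(z,3\lambda^n)$ for $z$ the midpoint, gives both inequalities in \eqref{qdoubling}. (Proposition~\ref{prop:theAbound} is not actually needed here; the containments are enough.)

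The converse, however, has a genuine gap. Two problems compound each other. First, quasi-net doubling compares $\mu(\Delta_k)$ with $\mu(\Delta_k^\ast)$, a union of \emph{same-level} neighbours; it says nothing directly about the parent--child ratio $\mu(\Delta_k(x))/\mu(\Delta_{k+1}(x))$, so your ``single-step estimate'' $\mu(\Delta_k(x))\le c\,q^k\mu(\Delta_{k+1}(x))$ does not follow from \eqref{qdoubling} without a separate argument (and $\Delta_k(x)$ may have on the order of $1/f(k+1)$ children, so chaining neighbour comparisons across all of them costs far more than $q^k$). Second, even granting that estimate, the telescoped product is
\[
\prod_{k=m}^{N-1} c\,q^{k}\;=\;c^{\,N-m}\,q^{\,\frac{(m+N-1)(N-m)}{2}}\;\asymp\;q^{\,N(N-m)},
\]
while $R/r\asymp\lambda^{-(N-m)}$. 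Bounding $q^{N(N-m)}$ by $(R/r)^s=\lambda^{-s(N-m)}$ forces $s\gtrsim N\log q/|\log\lambda|\to\infty$, so no finite $s$ works and the argument yields $H(\delta)=\infty$, not a finite bound.

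The paper avoids this by applying quasi-net doubling \emph{once}, at the outer level $N$ only, to obtain $\mu(B(x,R))\le c\,q^N P_N(\Delta_N(x))$; the remaining descent from level $N$ to level $n$ is handled entirely by Lemma~\ref{PnRel}(a), giving $P_N(\Delta_N(x))\le t^{\,n-N}P_n(\Delta_n(x))$ with $t=(\min_j p_j)^{-\Theta}$, and $P_n(\Delta_n(x))\le\mu(B(x,\lambda^n))$ by \eqref{PnLower}. This produces a bound $c\,q^N t^{\,n-N}$, linear in the exponent. The single $q^N$ is then neutralised by choosing $q=2^{\delta/(1+\delta)}$, so that $q^N\le 2^{\,n-N}$ whenever $n\ge N(1+\delta)/(1+\delta)\cdot(1+\delta)$, yielding $\mu(B(x,R))/\mu(B(x,r))\le c(2t)^{\,n-N}\le c(R/r)^{2\beta}$ with $\beta=\log(2t)/|\log\lambda|$ independent of $\delta$. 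A second case (when $l(\Delta_N(x))<f(N+1)\lambda^{N+1}$, so \eqref{qdoubling} is not directly available at level $N$) is handled by passing to the first deeper level $j$ at which $\Delta_j(x)\ne\Delta_N(x)$ and applying the same idea there. The moral: use the net-doubling hypothesis once to collapse the neighbour union at the top scale, then let Lemma~\ref{PnRel} do all the multi-level work.
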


\begin{proof}
Suppose $\mu $ satisfies the AGWSC with function $f(n)$ and has finite
quasi-Assouad dimension. By Prop. \ref{doubling}(ii) $\mu $ is
quasi-doubling and, in particular, satisfies (\ref{finiteqA}) for all $q>1$
with $g(n)=f(n+1)\lambda /2$ and $b=3$. We will see that this already
implies $\mu $ is quasi-net doubling. In other words, we will prove that if
there is a constant $c$ so that 
\begin{equation}
\mu (B(x,f(n+1)\lambda ^{n+1}/2))\geq c\,q^{-n}\mu (B(x,3\lambda ^{n}))
\label{qndwsc}
\end{equation}%
for all $x\in \supp\mu $ and positive integers $n,$ then $\mu $ is quasi-net
doubling.

Assume $\Delta _{n}$ is a level $n$ net interval with $l(\Delta _{n})\geq
f(n+1)\lambda ^{n+1}$. Let $z$ be the midpoint of $\Delta _{n}$ and let $%
\Delta _{n}^{\ast }$ refer to either $\Delta _{n}^{L}$ or $\Delta _{n}^{R}$.
Then $B(z,f(n+1)\lambda ^{n+1}/2)\subseteq \Delta _{n}$ and $B(z,3\lambda
^{n})\supseteq \Delta _{n}^{\ast }$. Thus, using \eqref{finiteqA}, 
\begin{equation*}
\mu (\Delta _{n})\,\geq \,\mu (B(z,f(n+1)\lambda ^{n+1}/2))\,\geq
\,c\,q^{-n}\mu (B(z,3\lambda ^{n}))\geq c\,q^{-n}\mu (\Delta _{n}^{\ast }).
\end{equation*}%
This proves the left hand inequality in (\ref{qdoubling}).

Similarly, we can prove the other inequality. Recall that $l(\Delta
_{n}^{\ast })\geq f(n)\lambda ^{n}$ and so, letting $z$ be the midpoint of $%
\Delta _{n}^{\ast }$, and using \eqref{finiteqA}, we obtain 
\begin{equation*}
\mu (\Delta _{n}^{\ast })\geq \mu (B(z,f(n)\lambda ^{n}))\geq cq^{-n}\mu
(B(z,3\lambda ^{n}))\geq cq^{-n}\mu (\Delta _{n}).
\end{equation*}%
This proves the right hand inequality and therefore $\mu $ is quasi-net
doubling.

Now assume $\mu $ is quasi-net doubling and denote by $\lambda $ the minimal
contraction factor. Without loss of generality we can assume $\lambda <1/2$,
for if not, we can replace the IFS $\{S_{j}\}$ with suitable $k$-fold
compositions of the maps $S_{j}$.

Fix $\delta >0$ and let $N_0$ be large enough such that 
\begin{equation}  \label{eq:fNlambdabound}
f(N+1)\geq \lambda^{N\delta/2}
\end{equation}
for all $N\geq N_0$. Such an $N_0$ exists as the asymptotic gap weak
separation condition guarantees $\log(f(n))/n\to0$ as $n\to0$. We will be
using the bounds of \eqref{qdoubling} with $q=2^{\delta /(1+\delta )}>1 $
and consider $\Delta_N(x)$. \vskip1em \textbf{Case 1:} Assume that $%
l(\Delta_{N}(x))\geq f(N+1)\lambda ^{N+1}$. As $\Delta _{N}^{R}$ and $%
\Delta_{N}^{L}$ have length at least $f(N)\lambda ^{N}$, 
\begin{equation*}
B(x,f(N)(1-\lambda )\lambda ^{N})\cap \lbrack 0,1]\subseteq \Delta
_{N}(x)\cup \Delta_{N}^{R}\cup \Delta_{N}^{L}
\end{equation*}%
and thus by the quasi-net doubling condition,%
\begin{eqnarray*}
\mu (B(x,f(N)(1-\lambda )\lambda ^{N})) &\leq &\mu (\Delta_{N}(x))+\mu
(\Delta_{N}^{R})+\mu (\Delta_{N}^{L}) \\
&\leq &c\,q^{N}\mu (\Delta_{N}(x))
\end{eqnarray*}
for some $c>0$. From (\ref{PnLower}), we have $\mu (\Delta_{N}(x))\leq
P_{N}(\Delta _{N}(x))$ and $\mu (B(x,\lambda ^{n}))\geq P_{n}(\Delta_{n}(x))$
for any $n $. Let $t=(\min p_{j})^{-\Theta}$, where $\Theta$ is given in %
\eqref{eq:ThetaDefinition}. It now follows from Lemma \ref{PnRel} that%
\begin{equation*}
\frac{\mu (B(x,f(N)(1-\lambda )\lambda ^{N}))}{\mu (B(x,3\lambda ^{n}))}\leq 
\frac{c\,q^{N}P_{N}(\Delta_{N}(x))}{P_{n}(\Delta_{n}(x))}\leq
c\,q^{N}t^{n-N}.
\end{equation*}

The ``gap'' between $r$ and $R$ in the definition of the quasi-Assouad
dimension means that we can restrict our attention to the case where 
\begin{equation*}
\lambda ^{n}\leq (f(N)(1-\lambda )\lambda ^{N})^{1+\delta }.
\end{equation*}
We can therefore assume without loss of generality that $n\geq
N(1+\delta^{\prime })$ for all $\delta^{\prime }<\delta$. In particular,
this holds for $\delta^{\prime }=\delta/(1+\delta)$. Rearranging gives $%
N\delta/(1+\delta)= N\delta^{\prime }\leq(n-N)$ and hence $q^{N}=
2^{N\delta^{\prime }}\leq 2^{n-N}$. Taking $\beta =\log 2t/|\log \lambda |$
we have 
\begin{equation*}
\frac{\mu (B(x,f(N)(1-\lambda )\lambda ^{N}))}{\mu (B(x,3\lambda ^{n}))}\leq
c\lambda ^{-\beta (n-N)}.
\end{equation*}

Using \eqref{eq:fNlambdabound}, we have 
\begin{equation*}
\frac{f(N+1)(1-\lambda )\lambda ^{N}}{3\lambda ^{n}}\geq c_1\lambda
^{N-n}\lambda ^{N\delta /2}\geq c_1\lambda ^{-(n-N)/2},
\end{equation*}
for all $n\geq(1+\delta^{\prime })N>N\geq N_0$ and some $c_1>0$. Redefining $%
c$, if necessary, we obtain 
\begin{equation}
\frac{\mu (B(x,f(N)(1-\lambda )\lambda ^{N}))}{\mu (B(x,3\lambda ^{n}))}\leq
c\left( \frac{f(N+1)(1-\lambda )\lambda ^{N}}{3\lambda ^{n}}\right)
^{2\beta}.  \label{1}
\end{equation}

\vskip1em We next show that in the second case we obtain the same bound,
before establishing that this is sufficient to guarantee finite
quasi-Assouad dimension.

\vskip1em

\textbf{Case 2:} Assume $l(\Delta_{N}(x))<f(N+1)\lambda ^{N+1}$. In this
case, $\Delta _{N}(x)$ cannot contain two net subintervals of level $N+1$ as
their union would have length at least $f(N+1)\lambda ^{N+1}$. Thus $\Delta
_{N+1}(x)=\Delta_{N}(x)$. Fix $n$ such that $3\lambda ^{n}\leq
(f(N)(1-\lambda )\lambda ^{N})^{1+\delta }$ and choose the maximal integer $%
j $ such that $N<j\leq n$ and $\Delta_{N}(x)= \cdots =\Delta _{j}(x)$.

Since the union of two adjacent level $N$ net intervals has length at least $%
f(N)\lambda ^{N}$, it follows that the level $N$ net intervals immediately
adjacent to $\Delta_{N}(x)$ have length at least $f(N)(1-\lambda )\lambda
^{N}$. Denote the left and right net intervals of $\Delta_N(x)$ by $\Delta
_{N}^{r}$ and $\Delta_{N}^{l}$ respectively. Thus 
\begin{equation*}
B(x,f(N)(1-\lambda )\lambda ^{N})\cap \lbrack 0,1]\subseteq \Delta
_{N}(x)\cup \Delta_{N}^{r}\cup \Delta_{N}^{l}.
\end{equation*}%
Let $x_{1},x_{2}$ be the midpoints of $\Delta_{j}^{r}$ and $\Delta_{j}^{l}$
respectively. As each level $j$ net interval has length at most $\lambda
^{j} $ we have $B(x_{i},\lambda ^{j})\subseteq B(x,3\lambda ^{j})$ for $%
i=1,2 $.

These observations yield the bounds 
\begin{eqnarray*}
\mu (B(x,3\lambda ^{j})) &\geq &\max \left( \mu (B(x,\lambda ^{j})),\mu
(B(x_{1},\lambda ^{j})),\mu (B(x_{2},\lambda ^{j}))\right) \\
&\geq &\max (P_{j}(\Delta_{j}(x)),P_{j}(\Delta_{j}^{r}),P_{j}(\Delta
_{j}^{l}))
\end{eqnarray*}%
and 
\begin{eqnarray*}
\mu (B(x,f(N)(1-\lambda )\lambda ^{N})) &\leq &3\max (\mu (\Delta
_{N}(x)),\mu (\Delta_{N}^{r}),\mu (\Delta_{N}^{l})) \\
&\leq &3\max (P_{N}(\Delta_{N}(x)),P_{N}(\Delta_{N}^{r}),P_{N}(\Delta
_{N}^{l})).
\end{eqnarray*}%
Since $\Delta_{N}(x)=\Delta_{j}(x)$, it follows that $\Delta
_{j}^{r}\subseteq \Delta_{N}^{r}$ and $\Delta_{j}^{l}\subseteq \Delta
_{N}^{l}$, so 
\begin{equation*}
\frac{\mu (B(x,f(N)(1-\lambda )\lambda ^{N}))}{\mu (B(x,3\lambda ^{j}))}\leq
3\max \left( \frac{P_{N}(\Delta_{N}(x))}{P_{j}(\Delta_{j}(x))},\frac{%
P_{N}(\Delta_{N}^{r})}{P_{j}(\Delta_{j}^{r})},\frac{P_{N}(\Delta_{N}^{l}))}{%
P_{j}(\Delta_{j}^{l}))}\right) \leq 3t^{j-N},
\end{equation*}
where $t=(\min p_j)^{-\Theta}$. If $j=n$, then as in Case 1, we have 
\begin{equation}
\frac{\mu (B(x,f(N)(1-\lambda )\lambda ^{N}))}{\mu (B(x,3\lambda ^{n}))}\leq
c\left( \frac{f(N+1)(1-\lambda )\lambda ^{N}}{3\lambda ^{n}}\right)
^{2\beta}.  \label{2}
\end{equation}

Otherwise, $j<n$ so that $\Delta_{j+1}(x)\neq \Delta_{j}(x)$. That ensures $%
\Delta_{j}(x)$ contains at least two $(j+1)$-level net intervals and so its
length is at least $f(j+1)\lambda ^{j+1}$. Thus the quasi-net doubling
condition implies 
\begin{equation*}
\mu (\Delta_{j}(x))+\mu (\Delta_{j}^{R})+\mu (\Delta_{j}^{L})\leq
c\,q^{j}\mu (\Delta_{j}(x))\leq c\,q^{j}P_{j}(\Delta_{j}(x))
\end{equation*}%
and hence%
\begin{equation*}
\frac{\mu (\Delta_{j}(x)\cup \Delta_{j}^{R}\cup \Delta_{j}^{L})}{\mu
(B(x,3\lambda ^{n}))}\leq c\,q^{j}\frac{P_{j}(\Delta_{j}(x))}{P_{n}(\Delta
_{n}(x))}\leq c\,q^{j}t^{n-j}.
\end{equation*}
\vskip1em We will deal with the case where the maximum of $P_{N}(\Delta
_{N}(x)),P_{N}(\Delta_{N}^{r})$, and $P_{N}(\Delta_{N}^{l})$ is $%
P_{N}(\Delta _{N}^{r})$. The other two cases are analogous and left to the
reader.

Let $y_{1}$ be the right endpoint of $\Delta_{j}(x)$. Then 
\begin{equation*}
B(y_{1},f(j+1)\lambda ^{j+1})\cap \lbrack 0,1]\subseteq \Delta_{j}(x)\cup
\Delta_{j}^{R}\cup \Delta_{j}^{L},
\end{equation*}%
so applying (\ref{PnUpper}) we have 
\begin{equation*}
\mu (\Delta_{j}(x))+\mu (\Delta_{j}^{R})+\mu (\Delta_{j}^{L})\geq \mu
(B(y_{1},f(j+1)\lambda ^{j+1})\geq P_{j+1+\kappa_{j+1}}(\Delta_{j+1+\kappa
_{j+1}}(y_{1})).
\end{equation*}%
where we are free to take $\Delta_{j+1+\kappa_{j+1}}(y_1)$ to be the net
interval having $y_1$ as the left endpoint. But as $\Delta_{j}(x)=%
\Delta_{N}(x)$, $y_{1}$ is also the left endpoint of $\Delta_{N}^{r}$. So we
can choose $\Delta_{N}(y_{1})=\Delta_{N}^{r}$. Combining these observations
gives 
\begin{eqnarray*}
\frac{\mu (B(x,f(N)(1-\lambda )\lambda ^{N}))}{\mu (\Delta_{j}(x)\cup
\Delta_{j}^{R}\cup \Delta_{j}^{L})} &\leq &\frac{3P_{N}(\Delta_{N}^{r})}{%
P_{j+1+\kappa_{j}}(\Delta_{j+1+\kappa_{j+1}}(y_{1}))} \\
&\leq &\frac{3P_{N}(\Delta_{N}(y_{1}))}{P_{j+1+\kappa_{j+1}}(\Delta
_{j+1+\kappa_{j+1}}(y_{1}))}\leq ct^{j+\kappa_{j+1}-N}\text{.}
\end{eqnarray*}

Consequently, 
\begin{eqnarray*}
\frac{\mu (B(x,f(N)(1-\lambda )\lambda ^{N}))}{\mu (B(x,3\lambda ^{n}))}
&\leq &\frac{\mu (B(x,f(N)(1-\lambda )\lambda ^{N}))}{\mu (\Delta
_{j}(x)\cup \Delta_{j}^{R}\cup \Delta_{j}^{L})}\frac{\mu (\Delta _{j}(x)\cup
\Delta_{j}^{R}\cup \Delta_{j}^{L})}{\mu (B(x,3\lambda ^{n}))} \\
&\leq &ct^{j+\kappa_{j+1}+1-N}q^{j}t^{n-j}\leq c\,q^{n}t^{n-N+\kappa
_{n}}\leq c2^{n-N}t^{n-N+\kappa_{n}}.
\end{eqnarray*}

Since $(\log f(N+1))/N$ and $\kappa_n/n$ tend to zero for increasing $n,N$,
there exists $N_1$ such that 
\begin{equation*}
\left( 1-\frac{N}{n} \right)\log(2t\lambda^{2\beta}) \leq 2\beta\frac{\log
f(N+1)}{n}-\frac{\kappa_n}{n}\log t
\end{equation*}
for all $n\geq(1+\delta)N\geq N_1$. Thus 
\begin{equation*}
(2t)^{n-N}t^{\kappa_{n}}\leq \left( f(N+1)\lambda ^{-(n-N)}\right) ^{2\beta}
\end{equation*}%
and that ensures 
\begin{equation}
\frac{\mu (B(x,f(N)(1-\lambda )\lambda ^{N}))}{\mu (B(x,3\lambda ^{n}))}\leq
c\left( \frac{f(N+1)(1-\lambda )\lambda ^{N}}{3\lambda ^{n}}\right) ^{2\beta}
\label{3}
\end{equation}%
for all $N\geq N_1$.

\vskip1em

Having established the same upper bound in \eqref{1}, \eqref{2}, and %
\eqref{3} it remains to show that this is sufficient for the quasi-Assouad
dimension to be finite. Let $N_2=\max\{N_0,N_1\}$. For fixed $\delta^{\prime
}>\delta$, let $r\leq R^{1+\delta^{\prime }}\leq R <
f(N_2)(1-\lambda)\lambda^{N_2}$ and choose $N\geq N_2$ and $n$ such that 
\begin{equation*}
f(N+1)(1-\lambda )\lambda ^{N+1}\leq R\leq f(N)(1-\lambda )\lambda ^{N}
\qquad\text{and}\qquad 3\lambda ^{n}\leq r\leq 3\lambda ^{n-1}.
\end{equation*}
We note that this is well-defined as $f$ is non-increasing and $r\leq
R^{1+\delta^{\prime }}$ gives $n\geq (1+\delta)N$. Now, by appealing to %
\eqref{1}, \eqref{2}, and \eqref{3}, we have%
\begin{equation*}
\frac{\mu (B(x,R))}{\mu (B(x,r))}\leq \frac{\mu (B(x,f(N)(1-\lambda )\lambda
^{N}))}{\mu (B(x,3\lambda ^{n}))}\leq c\left( \frac{f(N+1)(1-\lambda
)\lambda ^{N}}{3\lambda ^{n}}\right) ^{2\beta}\leq c\left( \frac{R}{r}%
\right) ^{2\beta}.
\end{equation*}%
This proves that $\dim\sb{\,\mathrm{qA}}\,\mu \leq 2\beta <\infty$.
\end{proof}

We finish this section by providing a classification of the quasi-Assouad
dimension in terms of quasi-doubling.

\begin{corollary}
Let $\mu $ be a self-similar measure that satisfies the asymptotic gap weak
separation condition and with $\supp\mu =[0,1]$. Then $\dim %
\sb{\,\mathrm{qA}}\,\mu <\infty $ if and only if $\mu $ is quasi-doubling.
\end{corollary}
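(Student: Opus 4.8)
The plan is to deduce this corollary from the combination of Theorem~\ref{qdimpliesqA}, Proposition~\ref{doubling}, and the equivalence between quasi-doubling and quasi-net doubling that is already essentially contained in the proof of Theorem~\ref{qdimpliesqA}. Concretely, I would argue as follows. For the forward direction, suppose $\dim\sb{\,\mathrm{qA}}\,\mu<\infty$. Then Proposition~\ref{doubling}(ii) applies to \emph{any} Borel probability measure, so in particular it gives that $\mu$ is quasi-doubling. No self-similarity is needed here, so this half is immediate.

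For the reverse direction, suppose $\mu$ is quasi-doubling. The key observation is that the first half of the proof of Theorem~\ref{qdimpliesqA} shows that quasi-doubling implies quasi-net doubling: indeed, there it is shown that if $\mu$ is quasi-doubling then it satisfies \eqref{qndwsc} for all $q>1$ (taking $g(n)=f(n+1)\lambda/2$ and $b=3$ in the reformulation \eqref{finiteqA} of quasi-doubling), and \eqref{qndwsc} is then shown to imply the quasi-net doubling condition \eqref{qdoubling} by placing balls at midpoints of net intervals and using the nesting relations $B(z,f(n+1)\lambda^{n+1}/2)\subseteq\Delta_n\subseteq B(z,3\lambda^n)$ together with the lower bound $l(\Delta_n^\ast)\ge f(n)\lambda^n$ on the lengths of the adjacent net intervals. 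Hence $\mu$ is quasi-net doubling. Now Theorem~\ref{qdimpliesqA} (its other direction) gives $\dim\sb{\,\mathrm{qA}}\,\mu<\infty$.

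So the proof is essentially a two-line citation chain: quasi-doubling $\Leftrightarrow$ quasi-net doubling (the first for the support hypothesis, via the argument already in Theorem~\ref{qdimpliesqA}) and quasi-net doubling $\Leftrightarrow$ finite quasi-Assouad dimension (Theorem~\ref{qdimpliesqA}), with finite quasi-Assouad dimension $\Rightarrow$ quasi-doubling handled separately by Proposition~\ref{doubling}(ii). I do not anticipate a genuine obstacle; the only mild subtlety is being careful that the implication ``quasi-doubling $\Rightarrow$ quasi-net doubling'' really is established in full generality in the proof of Theorem~\ref{qdimpliesqA} (it is — that is exactly what the paragraph containing \eqref{qndwsc} does, and it uses only the AGWSC and $\supp\mu=[0,1]$, both of which are standing hypotheses of the corollary). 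One should therefore phrase the corollary's proof so as to invoke that portion of Theorem~\ref{qdimpliesqA} cleanly rather than re-deriving it. An alternative, even more direct route is to observe that Proposition~\ref{doubling}(ii) gives one implication for arbitrary measures, and for the converse to feed quasi-doubling (in the form \eqref{finiteqA}) directly into the ``quasi-net doubling $\Rightarrow$ finite quasi-Assouad'' half of Theorem~\ref{qdimpliesqA}, since that half only ever used the quasi-net doubling hypothesis through its consequence on net intervals — but going through quasi-net doubling as an intermediate is cleaner and is what I would write.
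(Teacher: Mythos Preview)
Your proposal is correct and matches the paper's own proof essentially verbatim: the paper's one-sentence argument is precisely that the first part of the proof of Theorem~\ref{qdimpliesqA} already shows quasi-doubling implies quasi-net doubling (hence finite quasi-Assouad dimension by the theorem), with the converse direction coming from Proposition~\ref{doubling}(ii). Your write-up is simply a more explicit unpacking of this same chain of citations.
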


\begin{proof}
The first part of the proof of Theorem \ref{qdimpliesqA} actually shows that
quasi-doubling implies quasi-net doubling and thus has finite quasi-Assouad
dimension.
\end{proof}

\begin{corollary}
\label{char} Let $\mu$ be a self-similar measure with $\supp\mu=[0,1]$.
Suppose $\mu $ satisfies the weak separation condition with minimal
contraction factor $\lambda $. Then $\dim \sb{\,\mathrm{qA}}\,\mu <\infty $
if and only if for every $q>1$ and $0<A<B$ there exist constants $c=c(q,A,B)$
such that%
\begin{equation}
\mu (B(x,A\lambda ^{n}))\geq c\,q^{-n}\mu (B(x,B\lambda ^{n}))
\label{wscdoubling}
\end{equation}%
for all $n\in \mathbb{N}$ and $x\in \supp\mu $. Equivalently, $\dim %
\sb{\,\mathrm{qA}}\,\mu <\infty $ if and only if for every $\varepsilon >0,$
there is a constant $C$ such that 
\begin{equation*}
\mu (B(x,r))\geq Cr^{\varepsilon }\mu (B(x,2r))
\end{equation*}
for all $r\in (0,1)$ and $x\in \supp\mu$.
\end{corollary}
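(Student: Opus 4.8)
The plan is to deduce the corollary from Proposition~\ref{doubling} and Theorem~\ref{qdimpliesqA}, using the observation that the weak separation condition is exactly the asymptotic gap weak separation condition with constant function $f(n)\equiv a$ for some $a<\lambda$. It therefore suffices to prove the cycle: $\dimqa\mu<\infty$ $\Rightarrow$ $\mu$ is quasi-doubling $\Rightarrow$ \eqref{wscdoubling} holds for every $q,A,B$ $\Rightarrow$ $\mu$ is quasi-net doubling $\Rightarrow$ $\dimqa\mu<\infty$, together with the side implications quasi-doubling $\Rightarrow$ the inequality $\mu(B(x,r))\geq Cr^{\varepsilon}\mu(B(x,2r))$ for every $\varepsilon>0$ $\Rightarrow$ \eqref{wscdoubling} for every $q,A,B$. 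Combining these closes the loop and identifies all three statements.

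First I would dispatch the implications that are immediate from the definitions. That $\dimqa\mu<\infty$ implies quasi-doubling is Proposition~\ref{doubling}(ii). Given quasi-doubling, the inequality \eqref{wscdoubling} for a prescribed triple $q,A,B$ is just the reformulation \eqref{finiteqA} applied to the constant sequence $g(n)\equiv A$ and the constant $b=B$; and the $r^{\varepsilon}$-inequality is \eqref{qdoublingdef} applied with $G\equiv 1$. Finally, assuming \eqref{wscdoubling} holds for every $q>1$ (with the particular constants $A=a\lambda/2$ and $B=3$), I would invoke the first half of the proof of Theorem~\ref{qdimpliesqA}: there the only input used is \eqref{qndwsc}, which in the present setting, where $f\equiv a$ is constant, is precisely \eqref{wscdoubling} for the triple $(q,a\lambda/2,3)$. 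Hence that same net-interval computation shows $\mu$ is quasi-net doubling, and Theorem~\ref{qdimpliesqA} then gives $\dimqa\mu<\infty$.

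The one step requiring an actual estimate is that the $r^{\varepsilon}$-inequality for all $\varepsilon>0$ implies \eqref{wscdoubling}. Fix $q>1$ and $0<A<B$, and set $k=\lceil\log_{2}(B/A)\rceil$, so that $B(x,B\lambda^{n})\subseteq B(x,2^{k}A\lambda^{n})$. Applying the $r^{\varepsilon}$-inequality successively at the radii $A\lambda^{n},2A\lambda^{n},\dots,2^{k-1}A\lambda^{n}$ and multiplying the $k$ resulting inequalities yields a constant $C'=C'(\varepsilon,k,A)>0$ with $\mu(B(x,A\lambda^{n}))\geq C'\lambda^{nk\varepsilon}\mu(B(x,B\lambda^{n}))$ for all $n$ and $x$. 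Now choose $\varepsilon$ small enough that $\lambda^{k\varepsilon}\geq q^{-1}$, which is legitimate exactly because the hypothesis is available for \emph{every} $\varepsilon$; then $\lambda^{nk\varepsilon}\geq q^{-n}$ and \eqref{wscdoubling} follows with $c=C'$.

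The point to watch, and the reason the machinery of Section~\ref{sect:finite} is needed rather than a direct argument, is that one must not attempt to prove \eqref{wscdoubling} $\Rightarrow\dimqa\mu<\infty$ by iterating a single-scale inequality all the way from scale $R$ down to scale $r$: such a chain has $\asymp n$ steps, each costing a factor $q^{-n}$, producing a loss of order $q^{-n^{2}}$, which is far too large to yield a polynomial bound $(R/r)^{s}$. In the chaining used above for the $r^{\varepsilon}$-inequality the number of steps $k$ is \emph{bounded} — it depends only on $B/A$, not on $n$ — so no such blow-up occurs; and the genuinely cross-scale comparison of $\mu(B(x,R))$ with $\mu(B(x,r))$ for $r\ll R$ is performed efficiently inside the proof of Theorem~\ref{qdimpliesqA} via Lemma~\ref{PnRel} and Proposition~\ref{prop:theAbound}. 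This is why it is cleanest to route the hard direction through the quasi-net doubling property rather than handle it by hand.
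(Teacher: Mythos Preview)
Your proposal is correct and follows essentially the same route as the paper: both prove the hard direction by observing that under the WSC the function $f$ in the AGWSC is constant, so condition~\eqref{wscdoubling} with the specific choice $A=a\lambda/2$, $B=3$ is exactly hypothesis~\eqref{qndwsc}, whence the first half of the proof of Theorem~\ref{qdimpliesqA} gives quasi-net doubling and hence finite quasi-Assouad dimension. The paper's proof records only this step and leaves the remaining implications (finite $\dimqa$ $\Rightarrow$ quasi-doubling $\Rightarrow$ \eqref{wscdoubling}, and the equivalence with the $r^{\varepsilon}$ formulation) to the reader via Proposition~\ref{doubling}(ii) and the remarks following~\eqref{qdoublingdef}; you have spelled these out explicitly, including the $k$-step chaining argument for $r^{\varepsilon}\Rightarrow$\eqref{wscdoubling}, which the paper does not write down. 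Your closing paragraph on why one must route through quasi-net doubling rather than naively iterate is a helpful gloss but not needed for the proof itself.
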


Note that this Corollary includes Theorem A.

\begin{proof}
As we observe in (\ref{qndwsc}), satisfying (\ref{wscdoubling}) with $%
A=a\lambda /2$ for $a$ the constant arising in the definition of the WSC and 
$B=3$ is enough to ensure the measure is quasi-net doubling and hence has
finite Assouad dimension.
\end{proof}

\section{Dimensions of weakly comparable and generalized regular measures}

\label{sect:comparable} The equicontractive self-similar measure $\mu $
arising from the weighted IFS, $\{S_{j},p_{j}\},$ \ is said to be \textit{%
regular} if $p_{0}=p_{m-1}=\min p_{j}$. In this section we study the more
general classes of \emph{generalized regular} and \emph{weakly comparable
measures} with the goal of proving Theorem B.

\subsection{Weakly comparable measures\label{sect:wc}}

\begin{definition}
A self-similar measure $\mu $ with support $[0,1]$ is \textbf{weakly
comparable} if for each $q>1$ there is a constant $c$, depending on $q$,
such that for all $n$ and adjacent net intervals $\Delta _{1},\Delta _{2}$
of level $n$, we have%
\begin{equation}
\frac{1}{c}q^{-n}P_{n}(\Delta _{2})\leq P_{n}(\Delta _{1})\leq
c\,q^{n}P_{n}(\Delta _{2}).  \label{comp}
\end{equation}
\end{definition}

Weakly comparable measures were originally introduced in \cite{HHS} where
they were called comparable measures. Property (\ref{comp}) is very useful
for studying dimensional properties since it allows one to approximate the
(often poorly understood) quantities $\mu (B(x,\lambda ^{n}))$ by the (often
better understood) $P_{n}(\Delta )$ for $\Delta $ containing $x$.

In Corollary \ref{RegimpliesComp} we will show that any regular self-similar
measure is weakly comparable. If an IFS is of finite type, then the net
intervals of level $n$ are comparable in size, thus if a finite type measure
is doubling, then it is weakly comparable. On the other hand, Example \ref%
{NotDoubling} gives an equicontractive, finite type IFS that is regular,
hence weakly comparable, but not doubling and therefore has infinite Assouad
dimension.

However, weakly comparable measures have finite quasi-Assouad dimension, as
we see next.

\begin{proposition}
\label{wkcompfinite}If $\mu$ is a weakly comparable, self-similar measure
that satisfies the asymptotic gap weak separation condition, then $\mu $ is
quasi-net doubling and hence has finite quasi-Assouad dimension.
\end{proposition}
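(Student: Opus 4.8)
The plan is to show directly that a weakly comparable measure $\mu$ satisfying the AGWSC is quasi-net doubling, and then invoke Theorem~\ref{qdimpliesqA} to conclude finiteness of the quasi-Assouad dimension. Recall the quasi-net doubling condition (Definition~\ref{def:netDoubl}) requires, for every $q>1$, constants $c_1,c_2>0$ with
\[
c_1 q^{-n}\mu(\Delta_n^{*})\le \mu(\Delta_n)\le c_2 q^{n}\mu(\Delta_n^{*})
\]
for $\Delta_n^{*}=\Delta_n^L,\Delta_n^R$ whenever $l(\Delta_n)\ge f(n+1)\lambda^{n+1}$. So it suffices to produce such estimates.

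First I would fix $q>1$ and apply the defining property (\ref{comp}) of weak comparability with some auxiliary ratio $q_0>1$ to be chosen (say $q_0 = q^{1/3}$, to leave room for the losses below). The key point is that $\Delta_n^{*}$ is a \emph{union of two} adjacent net intervals of level $n$, each adjacent to $\Delta_n$ (or to each other); so by two applications of (\ref{comp}) — first comparing $\Delta_n$ to the net interval of $\Delta_n^{*}$ that touches it, then comparing that one to the second constituent of $\Delta_n^{*}$ — one gets
\[
\tfrac1{c^2}q_0^{-2n}\,P_n(\Delta_n^{*})\;\le\;P_n(\Delta_n)\;\le\;2c\,q_0^{n}\,P_n(\Delta_n^{*}),
\]
where I've used subadditivity $P_n(\Delta_n^{*})\le P_n(\text{left})+P_n(\text{right})$ for the upper side and kept the larger of the two constituents for the lower side; the factor $2$ and $c^2$ are harmless once reabsorbed. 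Then I would convert $P_n$-estimates into $\mu$-estimates. Here is where (\ref{PnLower}) and Proposition~\ref{prop:theAbound} come in: on the one hand $\mu(\Delta_n(y))\le P_n(\Delta_n(y))$ for any $y$, and on the other $\mu(B(y,f(n)\lambda^n))\ge A^{\kappa_n}P_n(\Delta_n(y))$, while each net interval of level $n$ containing $y$ sits inside a ball $B(y,\lambda^n)$ and, because it has length at least $f(n)\lambda^n$ (this is exactly the remark preceding Definition~\ref{def:netDoubl}, valid for $\Delta_n^L,\Delta_n^R$ and for any single net interval whose endpoints are both iterates of $0$ or both of $1$ — and more care is needed for a general net interval, see below), contains a comparable ball. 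Since $\kappa_n/n\to 0$, the factors $A^{\pm\kappa_n}$ are absorbed into $q^{n}$ at the cost of enlarging $n_0$, and the same for any $t^{\kappa_n}$-type loss coming from Lemma~\ref{PnRel}.

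The main obstacle I anticipate is the passage between $\mu$ of a net interval and $\mu$ of a ball \emph{for a general net interval $\Delta_n$}, as opposed to $\Delta_n(0)$ or $\Delta_n(1)$: a generic net interval of level $n$ need not have length bounded below by $f(n)\lambda^n$ (its endpoints may be one iterate of $0$ and one of $1$), so one cannot directly say $B(z,c f(n)\lambda^n)\subseteq\Delta_n$ for $z$ its midpoint. The way around this is to note that the hypothesis $l(\Delta_n)\ge f(n+1)\lambda^{n+1}$ is precisely what we are given in the quasi-net doubling condition, so for $\Delta_n$ itself we do have a ball of radius $\sim f(n+1)\lambda^{n+1}/2$ inside it and hence $\mu(\Delta_n)\ge\mu(B(z,f(n+1)\lambda^{n+1}/2))\ge A^{\kappa_{n+1}}P_{n+1}(\Delta_{n+1}(z))\ge A^{\kappa_{n+1}}(\min p_j)^{\Theta}P_n(\Delta_n)$ using Proposition~\ref{prop:theAbound} and Lemma~\ref{PnRel}(a); combined with $\mu(\Delta_n)\le P_n(\Delta_n)$ this pins $\mu(\Delta_n)$ to $P_n(\Delta_n)$ up to a subexponential factor. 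For $\Delta_n^{*}$, which by the remark before Definition~\ref{def:netDoubl} does have length at least $f(n)\lambda^n$, the analogous two-sided comparison $\mu(\Delta_n^{*})\asymp_{\text{subexp}} P_n(\Delta_n^{*})$ follows the same way. Assembling these three comparisons — $\mu(\Delta_n)\asymp P_n(\Delta_n)$, $P_n(\Delta_n)\asymp q_0^{\pm O(n)}P_n(\Delta_n^{*})$, and $\mu(\Delta_n^{*})\asymp P_n(\Delta_n^{*})$ — and absorbing every subexponential error into $q^{\pm n}$ by choosing $n_0$ large (and adjusting $c_1,c_2$ to cover the finitely many $n<n_0$, where everything is bounded below by $(\min p_j)^n$) yields (\ref{qdoubling}). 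Hence $\mu$ is quasi-net doubling, and Theorem~\ref{qdimpliesqA} gives $\dim\sb{\,\mathrm{qA}}\,\mu<\infty$.
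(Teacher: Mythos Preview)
Your proposal is correct and follows essentially the same route as the paper: use weak comparability to compare $P_n(\Delta_n)$ with $P_n$ of the (at most two) adjacent net intervals making up $\Delta_n^{*}$, then convert between $\mu$ and $P_n$ via the inclusion $B(z,f(n+1)\lambda^{n+1}/2)\subseteq\Delta_n$ together with Proposition~\ref{prop:theAbound} and Lemma~\ref{PnRel}, absorbing the $A^{\kappa_n}$-type losses as subexponential factors into $q^{\pm n}$. The paper organizes the bookkeeping slightly differently (it only writes out the one-sided bound $\mu(\Delta_n)\ge c\,q_0^{-n}\mu(\Delta_n^{*})$ explicitly and declares the reverse analogous), but the ingredients and the logic are the same; one small technical point is that to apply Proposition~\ref{prop:theAbound} to $B(z,f(n+1)\lambda^{n+1}/2)$ you should drop one more level (to $n+2$) so the radius has the form $f(m)\lambda^m$, as the paper does --- this costs only another bounded power of $A$ and is already covered by your ``subexponential loss'' remark.
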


\begin{proof}
Fix $q_{0}>1$ and let $q=(q_{0})^{1/3}>1$. By Lemma \ref{PnRel} there is a
constant $A=\min p_j^\Theta$ such that $P_{n}(\Delta _{n}(x))\geq
A\,P_{n-1}(\Delta_{n-1}(x))$ for all $n$ and $x\in[0,1]$. Choose $%
\varepsilon >0$ so that $A^{\varepsilon }\geq q^{-1}$. Given any net
interval $\Delta_{n}(x)$ we have 
\begin{equation*}
\mu (\Delta_{n}^{\ast })\leq P_{n}(\Delta_{n}^{L})+P_{n}(\Delta
_{n}^{R})\leq c\,q^{2n}P_{n}(\Delta_{n}(x)),
\end{equation*}
where $\Delta^\ast_n, \Delta^L_n,\Delta^R_n$ are as in Definition \ref%
{def:netDoubl}. Note that by the definition of quasi-net doubling we only
need to check the case when $l(\Delta_{n}(x))\geq f(n+1)\lambda ^{n+1}$.
Taking $z$ to be the midpoint of $\Delta_{n}(x)$ we have 
\begin{equation*}
\Delta_{n}(x)\supseteq B(z,f(n+1)\lambda ^{n+1}/2)\supseteq
B(z,f(n+2)\lambda ^{n+2}),
\end{equation*}%
hence Lemma \ref{PnRel} and \eqref{PnUpper} yield 
\begin{equation*}
\mu (\Delta_{n}(x))\geq \mu (B(z,f(n+2)\lambda ^{n+2}))\geq P_{n+2+\kappa
_{n+2}}(\Delta_{n+2+\kappa_{n+2}}(x))\geq A^{2+\kappa_{n+2}}P_{n}(\Delta
_{n}(x)).
\end{equation*}

For large enough $n$, the weakly comparable assumption thus implies%
\begin{eqnarray*}
\mu (\Delta_{n}(x)) &\geq &A^{2}A^{n\varepsilon }P_{n}(\Delta_{n}(x))\geq
A^{2}q^{-n}P_{n}(\Delta_{n}(x)) \\
&\geq &A^{2}q^{-3n}c^{-1}\mu (\Delta_{n}^{\ast })\geq
A^{2}q_{0}^{-n}c^{-1}\mu (\Delta_{n}^{\ast }).
\end{eqnarray*}

The inequality $\mu (\Delta_{n}^{\ast })\geq c_2\,q_{0}^{-n}\mu
(\Delta_{n}(x))$ follows analogously. This shows that $\mu$ is quasi-net
doubling.
\end{proof}

The next result is similar in spirit, but more technical, and will be used
later to find upper bounds on the quasi-Assouad dimension.

\begin{lemma}
Suppose $\mu$ is a weakly comparable, self-similar measure that satisfies
the asymptotic gap weak separation condition with function $f(n)$. Then for
any $q>1$ there are constants $c_{1},c_{2}>0$ depending on $q$ such that,
for all $x\in[0,1]$ and $n\in\N$, 
\begin{equation}
c_{1}q^{-n}P_{n}(\Delta_{n}(x))\leq P_{n+\kappa_{n}}(\Delta_{n+\kappa
_{n}}(x))\leq \mu (B(x,f(n)\lambda ^{n}))\leq c_{2}q^{n}P_{n}(\Delta
_{n}(x)).  \label{Comp}
\end{equation}
\end{lemma}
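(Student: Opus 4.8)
The plan is to prove the three inequalities in \eqref{Comp} separately; the outer two of the chain involving $P_{n+\kappa_n}$ follow quickly from results already in hand (and do not even use weak comparability), while the rightmost inequality is the substantive one. For the leftmost inequality, iterate Lemma~\ref{PnRel}(a) a total of $\kappa_n$ times to get $P_{n+\kappa_n}(\Delta_{n+\kappa_n}(x))\ge A^{\kappa_n}P_n(\Delta_n(x))$ with $A=(\min_j p_j)^{\Theta}<1$, which is exactly the estimate recorded in the proof of Proposition~\ref{prop:theAbound}. Since $\kappa_n/n\to0$ while $A<1<q$, one has $A^{\kappa_n}q^n\to\infty$, so $A^{\kappa_n}\ge q^{-n}$ once $n\ge n_0$; for the finitely many $n<n_0$ the trivial bounds $P_{n+\kappa_n}(\Delta_{n+\kappa_n}(x))\ge(\min_j p_j)^{\,n+\kappa_n}$ and $P_n(\Delta_n(x))\le1$ let one absorb the discrepancy into a constant $c_1=c_1(q)$. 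The middle inequality $P_{n+\kappa_n}(\Delta_{n+\kappa_n}(x))\le\mu(B(x,f(n)\lambda^n))$ is precisely \eqref{PnUpper}.

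For the rightmost inequality the key geometric observation is the containment $B(x,f(n)\lambda^n)\cap[0,1]\subseteq\Delta_n(x)\cup\Delta_n^L\cup\Delta_n^R$: by the remark preceding Definition~\ref{def:netDoubl} each of $\Delta_n^L$ and $\Delta_n^R$ has length at least $f(n)\lambda^n$, so moving a distance $f(n)\lambda^n$ to either side of $x\in\Delta_n(x)$ never leaves these three pieces (when $\Delta_n(x)=\Delta_n(0)$ or $\Delta_n(1)$ only one of $\Delta_n^L,\Delta_n^R$ is required, the other side being truncated by an endpoint of $[0,1]$; cf.\ \eqref{Delta0}). Hence
\begin{equation*}
\mu(B(x,f(n)\lambda^n))\le\mu(\Delta_n(x))+\mu(\Delta_n^L)+\mu(\Delta_n^R),
\end{equation*}
and since $\mu(\Delta)\le P_n(\Delta)$ for every level-$n$ net interval $\Delta$ (the argument following \eqref{eq:netIntervals} applies verbatim to any net interval), the right-hand side is a sum of at most five terms of the form $P_n(\Delta')$, with $\Delta'$ a level-$n$ net interval lying at most two adjacency steps from $\Delta_n(x)$.

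It remains to compare each such $P_n(\Delta')$ with $P_n(\Delta_n(x))$. Applying the weak comparability inequality \eqref{comp} at most twice gives $P_n(\Delta')\le c\,q^n P_n(\Delta_n(x))$ for a constant $c=c(q)$, whence $\mu(B(x,f(n)\lambda^n))\le5c\,q^n P_n(\Delta_n(x))$, which is the claim with $c_2=5c$. The one point that needs care is exponent bookkeeping: iterating \eqref{comp} along two consecutive net intervals produces a priori a factor $q^{2n}$, so one should first fix (as above) that at most two adjacency steps are ever needed and then run the whole comparison with $q^{1/2}$ in place of $q$. The finitely many small values of $n$ and the endpoint cases $x\in\Delta_n(0),\Delta_n(1)$ affect only the constants and are routine; the main obstacle, such as it is, is simply organising the geometric containment together with this exponent accounting.
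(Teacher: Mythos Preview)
Your proof is correct and follows essentially the same route as the paper: the containment $B(x,f(n)\lambda^n)\cap[0,1]\subseteq\Delta_n(x)\cup\Delta_n^L\cup\Delta_n^R$ together with weak comparability for the upper bound, and the iteration of Lemma~\ref{PnRel}(a) combined with $\kappa_n/n\to 0$ for the lower bound, exactly as the paper does (referring back to the reasoning in Proposition~\ref{wkcompfinite}). Your treatment is in fact more careful than the paper's on two points: you make explicit the $q^{1/2}$ bookkeeping needed when chaining two adjacency steps, and you spell out how the finitely many small $n$ are absorbed into the constant~$c_1$.
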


\begin{proof}
Since $l(\Delta_{n}^{\ast})\geq f(n)\lambda ^{n}$, we have 
\begin{equation*}
B(x,f(n)\lambda ^{n})\cap \lbrack 0,1]\subseteq \Delta_{n}(x)\cup \Delta
_{n}^{R}\cup \Delta_{n}^{L},
\end{equation*}
so that $\mu (B(x,f(n)\lambda ^{n}))\leq c q^{n}P_{n}(\Delta_{n}(x))$ for
some $c>0$. Similar reasoning to the above shows that $P_{n+\kappa_{n}}(%
\Delta _{n+\kappa_{n}}(x))\geq q^{-n}P_{n}(\Delta_{n}(x))$ for $n$
sufficiently large and as we always have $P_{n+\kappa_{n}}(\Delta_{n+\kappa
_{n}}(x))\leq \mu (B(x,f(n)\lambda ^{n}))$, the inequalities of (\ref{Comp})
are complete.
\end{proof}

\subsection{Generalized regular measures}

\label{sect:regular}

To define generalized regular measures, we first need to introduce further
terminology. We assume $\supp\mu =[0,1]$.

Suppose $\Delta \in \mathcal{F}_{N}$ has descendent net subinterval $\Delta
^{\prime }\in \mathcal{F}_{N+n}$. If $u\in \Lambda _{N}$ with $%
S_{u}[0,1]\supseteq \Delta $, then there is some word $w$ such that $uw\in
\Lambda _{n+N}$ and $S_{uw}[0,1]\supseteq \Delta ^{\prime }$. We call such a
word $w$ a \textit{path of level }$n$ (of $\Delta $). Clearly, 
\begin{equation*}
P_{N+n}(\Delta _{N+n}(x))\geq \inf \{p_{w}:w\text{ path of level }%
n\}P_{N}(\Delta _{N}(x)).
\end{equation*}%
We call $w$ a \textit{left-edge path} if $S_{w}(0)=0$ and a \textit{%
right-edge path} if $S_{w}(1)=1$. Put 
\begin{equation*}
\Gamma _{\Delta ,n}^{L}=\sum_{\substack{ w\text{ left-edge path}  \\ \text{%
of }\Delta \text{ of level }n}}\hspace{-1.5em}p_{w}\qquad \text{ and }\qquad
\Gamma _{n}^{L}=\Gamma _{\lbrack 0,1],n}^{L}.
\end{equation*}%
We define $\Gamma _{\Delta ,n}^{R}$ and $\Gamma _{n}^{R}$ similarly, and set 
\begin{equation*}
\Gamma _{n}=\Gamma _{n}^{L}+\Gamma _{n}^{R}\text{ and }\Gamma _{\Delta
,n}=\Gamma _{\Delta ,n}^{R}+\Gamma _{\Delta ,n}^{L}.
\end{equation*}%
Note that 
\begin{equation*}
\Gamma _{n}^{L}=\sum_{\substack{ w\in \Lambda _{n}  \\ S_{w}(0)=0}}%
p_{w}=P_{n}(\Delta _{n}(0))
\end{equation*}%
hence the constants $s,t$ introduced in (\ref{s,t}) are also equal to%
\begin{equation*}
s=\liminf_{n\rightarrow \infty }\left( \Gamma _{n}^{L}\right) ^{1/n}\text{, }%
t=\liminf_{n\rightarrow \infty }\left( \Gamma _{n}^{R}\right) ^{1/n}.
\end{equation*}

For each positive integer $n$ and $x\in \lbrack 0,1]$, let 
\begin{equation*}
Q_{n}(x)=\sup_{N\in \N}\frac{P_{N}(\Delta _{N}(x))}{P_{N+n}(\Delta _{N+n}(x))%
}
\end{equation*}%
where $\Delta _{N+n}(x)$ is a child of $\Delta _{N}(x)$ containing $x$ (with 
$P_{N+n}(\Delta _{N+n}(x))$ minimal if there are two choices). Set%
\begin{equation*}
Q_{n}=\sup_{x\in \supp\mu }Q_{n}(x).
\end{equation*}%
Of course, $Q_{n}\leq Q_{1}^{n}$ and by Lemma \ref{PnRel}, $Q_{1}\leq
(\min_{j}p_{j})^{-\Theta }$ where $\Theta $ is given by %
\eqref{eq:ThetaDefinition}.

\begin{definition}
\label{genreg}The weighted iterated function system $\{S_{j},p_{j}\}$ is 
\textbf{generalized regular} if for each $q>1$, 
\begin{equation*}
\lim_{n\rightarrow \infty }Q_{n}q^{-n}\sup_{\Delta }\Gamma _{\Delta ,n}=0,
\end{equation*}%
where the supremum is taken over all net intervals.

We will also call the self-similar measure $\mu $ associated with $%
\{S_{j},p_{j}\}$ a generalized regular measure.
\end{definition}

In order to show that regular measures are generalized regular, we first
prove that all $\Gamma _{\Delta ,n}$ are comparable to $\Gamma _{n}.$

\begin{proposition}
There exists $c>0$ such that $\Gamma_n^{L}\leq \Gamma_{\Delta ,n}^{L}\leq
c\Gamma _{n}^{L}$ for all $n$ and $\Delta$. Similarly, $\Gamma_{n}^{R}$ is
comparable to $\Gamma_{\Delta,n}^{R}$.
\end{proposition}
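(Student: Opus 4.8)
The plan is to prove the two inequalities $\Gamma_n^L\le\Gamma_{\Delta,n}^L$ and $\Gamma_{\Delta,n}^L\le c\,\Gamma_n^L$ by analysing which words $w$ can occur as left-edge paths of a net interval $\Delta\in\mathcal{F}_N$, the upper bound being the more routine direction. \textbf{Upper bound:} suppose $w$ is a left-edge path of level $n$ of $\Delta$, so that $S_w(0)=0$ and $uw\in\Lambda_{N+n}$ for some $u\in\Lambda_N$ with $S_u[0,1]\supseteq\Delta$. From $u\in\Lambda_N$ one has $\lambda^{N+1}<r_u\le\lambda^N$, and feeding this into the relations $r_ur_w=r_{uw}\le\lambda^{N+n}<r_{(uw)^-}=r_ur_{w^-}$ shows, after a short computation, that $\lambda^{n+1}<r_w<\lambda^{n-1}$; hence $w\in\Lambda_{n-1}\cup\Lambda_n$. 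Thus every left-edge path of level $n$ of $\Delta$ is a left-edge word lying in $\Lambda_{n-1}$ or in $\Lambda_n$, so $\Gamma_{\Delta,n}^L\le\Gamma_{n-1}^L+\Gamma_n^L$. Since $\Gamma_{n-1}^L=P_{n-1}(\Delta_{n-1}(0))$ and $\Gamma_n^L=P_n(\Delta_n(0))$, Lemma~\ref{PnRel}(a) yields $\Gamma_{n-1}^L\le(\min_jp_j)^{-\Theta}\Gamma_n^L$, so the upper bound holds with $c=1+(\min_jp_j)^{-\Theta}$, independently of any separation hypothesis.

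For the lower bound I would construct, injectively and with weights preserved, a left-edge path of $\Delta$ of level $n$ out of each left-edge word $w$ of level $n$. Fix $u\in\Lambda_N$ with $S_u[0,1]\supseteq\Delta$ whose image realises the left endpoint of $\Delta$ as tightly as possible from the left (descending a bounded number of extra levels, if needed, so that this endpoint is an iterate of $0$, and absorbing the mismatch between $\{w:uw\in\Lambda_{N+n}\}$ and $\Lambda_n$ using $\Theta$ and Lemma~\ref{PnRel}). For a left-edge word $w$ with $uw\in\Lambda_{N+n}$, the image $S_{uw}[0,1]=[\,S_u(0),\,S_u(0)+r_ur_w\,]$ hugs the left edge of $S_u[0,1]$ and, having length comparable to $\lambda^{N+n}$, covers the leftmost descendant net subinterval of $\Delta$; hence $w$ is a left-edge path of $\Delta$ of level $n$. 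Summing $p_w$ over the resulting words then gives $\Gamma_{\Delta,n}^L\ge\Gamma_n^L$.

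The hard part is the lower bound: one must check that a covering word of $\Delta$ can indeed be chosen whose left-edge descendants reach into $\Delta$ (rather than falling just short of its left endpoint), and that the level-matching carried out with $\Theta$ and Lemma~\ref{PnRel} does not blow up the comparison constant. The right-edge assertion $\Gamma_n^R\le\Gamma_{\Delta,n}^R\le c\,\Gamma_n^R$ then follows from the symmetric argument, with the roles of $0$ and $1$, hence of the left and right endpoints, interchanged.
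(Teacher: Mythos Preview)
Your upper-bound argument contains a genuine gap. From $u\in\Lambda_N$ and $uw\in\Lambda_{N+n}$ you correctly obtain $r_w<\lambda^{n-1}$ and $r_{w^-}>\lambda^n$, but the deduction ``hence $w\in\Lambda_{n-1}\cup\Lambda_n$'' does not follow. If $r_w\le\lambda^n$ then indeed $w\in\Lambda_n$; however, when $r_w\in(\lambda^n,\lambda^{n-1})$ you would need $r_{w^-}>\lambda^{n-1}$ to place $w$ in $\Lambda_{n-1}$, and you only know $r_{w^-}>\lambda^n$. In a non-equicontractive IFS both $r_w$ and $r_{w^-}$ can lie in $(\lambda^n,\lambda^{n-1}]$ simultaneously (take ratios $1/4$ and $1/2$, so $\lambda=1/4$, and $w=111$ with $n=2$: then $r_w=1/8\in(1/16,1/4)$ and $r_{w^-}=1/4\not>\lambda^{n-1}=1/4$, yet one can choose $u=10\in\Lambda_1$ with $uw\in\Lambda_3$). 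Such a $w$ lies in neither $\Lambda_{n-1}$ nor $\Lambda_n$, so its weight $p_w$ is not accounted for in $\Gamma_{n-1}^L+\Gamma_n^L$, and your inequality $\Gamma_{\Delta,n}^L\le\Gamma_{n-1}^L+\Gamma_n^L$ can fail.

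The paper handles the problematic range $r_w\in(\lambda^n,\lambda^{n-1})$ differently: rather than trying to place $w$ in some $\Lambda_k$, it \emph{extends} $w$ by appending a bounded number $j\le k$ of copies of the letter~$0$ (where $k$ is the least integer with $r_0^k\le\lambda$) so that $w\bar 0_j\in\Lambda_n$. Since $S_0(0)=0$, the extension is still a left-edge word, and the bounded multiplicity of the map $w\mapsto w\bar 0_j$ together with $p_{w\bar 0_j}=p_0^j p_w$ gives the factor $kp_0^{-k}$. This is the step your argument is missing. Incidentally, you have the two directions reversed in difficulty relative to the paper: the paper treats the upper bound as the substantive part and does not give a separate argument for the lower bound $\Gamma_n^L\le\Gamma_{\Delta,n}^L$ at all.
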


\begin{proof}
Let $\Delta =[a,b]$ be a net interval of level $N$. Then $\Gamma_{\Delta
,n}^{L}=\sum p_{w}$, where the sum is over all $w$ where $S_{w}(0)=0$ and
there is some $u \in \Lambda_{N}$ such that $uw\in \Lambda_{N+n}$ and $%
S_{u}(0)=a$. This means that $w,u$ must satisfy the conditions $r_{u}\leq
\lambda ^{N}$, $r_{u^{-}}>\lambda ^{N}$, $r_{uw}\leq \lambda ^{N+n}$ and $%
r_{(uw )^{-}}=r_{uw^{-}}>\lambda ^{N+n}$. Since $r_{j}\geq \lambda$ for all $%
j$, it follows that $r_{w}<\lambda ^{n-1}$ and $r_{w ^{-}}>\lambda ^{n}$.
Consequently,%
\begin{eqnarray}
\Gamma_{\Delta ,n}^{L} &\leq &\hspace{-1.3em}\sum_{\substack{ S_{w}(0)=0, 
\\ r_{w}<\lambda ^{n-1},r_{w^{-}}>\lambda ^{n}}}\hspace{-1.5em}p_{w}  \notag
\\
&=&\hspace{-1em}\sum_{\substack{ S_{w}(0)=0,  \\ r_{w}\leq \lambda
^{n},r_{w^{-}}>\lambda ^{n}}}\hspace{-1.5em}p_{w}\hspace{1.5em}+\hspace{-1em}%
\sum_{\substack{ S_{w }(0)=0,  \\ r_{w}\in (\lambda ^{n},\lambda
^{n-1}),r_{w ^{-}}>\lambda ^{n}}}\hspace{-1.5em}p_{w}.  \label{eq:twoSums}
\end{eqnarray}

Recall that $w\in \Lambda_{n}$ is equivalent to $w$ satisfying $r_{w}\leq
\lambda ^{n}$ and $r_{w^{-}}>\lambda ^{n}$, thus the left sum in %
\eqref{eq:twoSums} is equal to $\Gamma_{n}^{L}$. For the second sum, let $k$
be the minimal integer such that $r_{0}^{k}\leq \lambda .$ For fixed $%
r_{w}\in (\lambda ^{n},\lambda ^{n-1})$, choose $j=j(w)\in \{1,2,...,k\}$
such that $r_{w }r_{0}^{j}\leq \lambda ^{n}$ and $r_{w}r_{0}^{j-1}>\lambda
^{n}$. Let $\overline{0}_j$ be the unique word of length $j$ containing just
the letter $0$. Then%
\begin{eqnarray*}
\sum_{\substack{ S_{w }(0)=0,  \\ r_{w}\in (\lambda ^{n},\lambda
^{n-1}),r_{w ^{-}}>\lambda ^{n}}}\hspace{-2.5em}p_{w} &\leq
&\sum_{j=1}^{k}\;\sum_{\substack{ S_{w}(0)=0  \\ w \overline{0}_{j}\in
\Lambda_{n}}}p_{w }\;\;=\;\;\;\sum_{j=1}^{k}p_{0}^{-j}\sum_{\substack{ %
S_{\tau}(0)=0  \\ \tau=w \overline{0}_{j}\in \Lambda_{n}}}p_{\tau} \\
&\leq &\sum_{j=1}^{k}p_{0}^{-j}\sum_{\substack{ S_{\tau}(0)=0  \\ \tau\in
\Lambda_{n}}}p_{\tau}\leq kp_{0}^{-k}\Gamma_{n}^{L}\text{.}
\end{eqnarray*}

We conclude $\Gamma _{\Delta ,n}^{L}\leq (1+kp_{0}^{-k})\Gamma _{n}^{L}$, as
required.
\end{proof}

\begin{corollary}
\label{RegimpliesComp}(i) A self-similar measure $\mu $ is generalized
regular if and only if 
\begin{equation*}
\lim_{n}Q_{n}q^{-n}\Gamma _{n}^{L}=\lim_{n}Q_{n}q^{-n}\Gamma _{n}^{R}=0\quad 
\text{ for every }\quad q>1\text{.}
\end{equation*}

(ii) An equicontractive, regular self-similar measure ($p_{0}=p_{m-1}=%
\min_{j}p_{j}$) is generalized regular.
\end{corollary}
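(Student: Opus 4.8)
The plan is to obtain (i) at once from the preceding proposition, and then to prove (ii) by combining a sharp bound on $Q_n$ that is special to the equicontractive setting with the explicit computation of $\Gamma_n^L$ and $\Gamma_n^R$ in the regular case.

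For (i), I would use that $[0,1]$ is itself a net interval together with the preceding proposition: after enlarging its constant so that it serves simultaneously for the left and right edges, we obtain a $c>0$, independent of $n$, with
\[
\Gamma_n^L \le \sup_\Delta \Gamma_{\Delta,n}^L \le c\,\Gamma_n^L,\qquad
\Gamma_n^R \le \sup_\Delta \Gamma_{\Delta,n}^R \le c\,\Gamma_n^R,
\]
so that $\sup_\Delta \Gamma_{\Delta,n}$ and $\Gamma_n:=\Gamma_n^L+\Gamma_n^R$ are comparable, uniformly in $n$. Multiplying through by $Q_nq^{-n}>0$ shows that $\lim_n Q_nq^{-n}\sup_\Delta\Gamma_{\Delta,n}=0$ for every $q>1$ if and only if $\lim_n Q_nq^{-n}\Gamma_n=0$ for every $q>1$; and since $\Gamma_n^L,\Gamma_n^R\ge 0$ and $Q_nq^{-n}>0$, the sum $Q_nq^{-n}(\Gamma_n^L+\Gamma_n^R)$ tends to $0$ precisely when both summands do. Reading this for every $q>1$ gives exactly the stated reformulation.

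For (ii), the crucial observation is that when the IFS is equicontractive, $\Lambda_k$ is exactly the set of words of length $k$, so every path of level $n$ (of any net interval) is a word of length $n$, and hence has weight $p_w\ge p_{\min}^{n}$, where $p_{\min}=\min_j p_j$. Substituting this into the inequality $P_{N+n}(\Delta_{N+n}(x))\ge \inf\{p_w: w\text{ a path of level }n\}\,P_N(\Delta_N(x))$ recorded just before Definition~\ref{genreg} yields $P_{N+n}(\Delta_{N+n}(x))\ge p_{\min}^{n}P_N(\Delta_N(x))$ for all $N$ and $x$, and therefore $Q_n\le p_{\min}^{-n}$. Next I would compute $\Gamma_n^L$: by the identity noted in the text, $\Gamma_n^L=P_n(\Delta_n(0))=\sum_{|w|=n,\,S_w(0)=0}p_w$; since in an equicontractive system the only map fixing $0$ is $x\mapsto\lambda x$ (we may assume the $S_j$ distinct, as merging coincident maps affects neither $\mu$ nor the hypotheses), an easy induction shows the only such word is $w=0^{n}$, so $\Gamma_n^L=p_0^{n}$, and symmetrically $\Gamma_n^R=p_{m-1}^{n}$. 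Regularity, $p_0=p_{m-1}=p_{\min}$, then gives $\Gamma_n^L=\Gamma_n^R=p_{\min}^{n}$, so $Q_nq^{-n}\Gamma_n^L\le p_{\min}^{-n}q^{-n}p_{\min}^{n}=q^{-n}\to 0$ for every $q>1$, and likewise for $\Gamma_n^R$; part (i) then delivers generalized regularity.

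The step I expect to require the most care is the bound $Q_n\le p_{\min}^{-n}$. The general estimate $Q_n\le Q_1^n\le(\min_j p_j)^{-\Theta n}$ already quoted in the text is too weak here: in the equicontractive case $\Theta=2$, and it would leave a surplus factor $p_{\min}^{-n}$ that fails to cancel against $\Gamma_n^L=p_{\min}^{n}$. The improvement is entirely a consequence of equicontractivity --- a path of level $n$ then has length exactly $n$ rather than merely at most $\Theta n$, so its weight is at least $p_{\min}^{n}$, which is precisely the common value of $\Gamma_n^L$ and $\Gamma_n^R$ in the regular case, yielding the exact cancellation that forces the limit.
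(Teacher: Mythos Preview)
Your proof is correct and follows the same line as the paper's own argument: part (i) is deduced directly from the comparability $\Gamma_n^{L,R}\le\Gamma_{\Delta,n}^{L,R}\le c\,\Gamma_n^{L,R}$ of the preceding proposition, and part (ii) uses exactly the bound $Q_n\le(\min_j p_j)^{-n}$ together with $\Gamma_n^L=p_0^{\,n}$, $\Gamma_n^R=p_{m-1}^{\,n}$ to force $Q_nq^{-n}\Gamma_n^{L,R}\le q^{-n}\to 0$. Your additional justification of the sharp bound $Q_n\le p_{\min}^{-n}$ via the fact that in the equicontractive case paths of level $n$ have length exactly $n$ (rather than merely $\le\Theta n$) is precisely the point the paper leaves implicit, and your remark that the cruder estimate $Q_n\le p_{\min}^{-\Theta n}$ would not suffice is a worthwhile observation.
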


\begin{proof}
(i) follows immediately from the Proposition.

(ii) The edge paths of level $n$ of such a measure are the words $(0)^{n}$
and $(1)^{n}$, so $\Gamma _{n}=p_{0}^{n}+p_{m-1}^{n}$. Furthermore, $%
Q_{n}\leq (\min p_{j})^{-n}=p_{0}^{-n}=p_{m-1}^{-n}$, hence the measure is
generalized regular.
\end{proof}

The notion of `generalized regular' was introduced in the study of
non-equicontractive finite type iterated function systems where it was
observed that generalized regular implies weakly comparable (see \cite[%
Theorem 4.11]{HHS}). In fact, this holds in general.

\begin{proposition}
A generalized regular, self-similar measure $\mu $ is weakly comparable.
\end{proposition}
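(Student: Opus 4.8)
The plan is to show that for any $q>1$ the weak comparability inequality (\ref{comp}) holds for adjacent net intervals $\Delta_1,\Delta_2$ of level $n$, using the generalized regular hypothesis to control how much the $P_n$-values of neighbouring net intervals can differ. The key geometric fact is that adjacent net intervals of level $n$ share a common endpoint, say $y$. If that shared endpoint is $S_u(0)$ for some $u\in\Lambda_n$ (or an iterate of $1$; the two cases are symmetric), then words $w$ with $S_w(0)=0$ and $uw\in\Lambda_{n+m}$ — that is, left-edge paths — will produce descendants clinging to $y$ from both sides. The idea is to compare $P_n(\Delta_1)$ and $P_n(\Delta_2)$ by passing to a deep common refinement near $y$, bound the loss in passing from level $n$ to level $n+m$ by $Q_m^{-1}$ (this is exactly the definition of $Q_m$), and bound the ``reachability'' from both $\Delta_1$ and $\Delta_2$ to net intervals adjacent to $y$ at level $n+m$ by a quantity controlled by $\Gamma_{\Delta,m}$.

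**First I would** fix $q>1$, write $q=(q')^{2}$ say with $q'>1$, and use the generalized regular condition to pick $m_0$ such that $Q_m q'^{-m}\sup_\Delta \Gamma_{\Delta,m}\le 1$ for all $m\ge m_0$; by Corollary~\ref{RegimpliesComp} it suffices to control $\Gamma^L_{\Delta,m}$ and $\Gamma^R_{\Delta,m}$ separately. Given adjacent net intervals $\Delta_1,\Delta_2$ of level $n$ with common endpoint $y$, consider the level $n+m$ net interval $\Theta$ having $y$ as an endpoint on the $\Delta_2$-side. On one hand, $P_{n+m}(\Theta)\ge Q_m^{-1}P_n(\Delta_2)$ by definition of $Q_m$ (applied along the chain of children containing $y$). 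On the other hand, every word $v\in\Lambda_{n+m}$ with $S_v[0,1]\supseteq\Theta$ factors as $v=u'w$ with $u'\in\Lambda_n$, $S_{u'}[0,1]\supseteq\Delta_1$, and $w$ an edge-path of level $m$ of $\Delta_1$ attached at the endpoint $y$ — so $P_{n+m}(\Theta)\le \Gamma_{\Delta_1,m}P_n(\Delta_1)$ (here is where the choice of $y$ as an \emph{edge} of both $\Delta_1$ and the refining interval forces the connecting word to be an edge-path). Combining, $P_n(\Delta_2)\le Q_m\,\Gamma_{\Delta_1,m}\,P_n(\Delta_1)\le Q_m\big(\sup_\Delta\Gamma_{\Delta,m}\big)P_n(\Delta_1)$.

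**The key step** is then the elementary optimization: for $n\ge m_0$ choose $m=m(n)$ with $m_0\le m\le n$, e.g. $m=n$ if $n\ge m_0$, so that $Q_m\sup_\Delta\Gamma_{\Delta,m}\le q'^{m}\le q'^{n}\le q^n$, giving $P_n(\Delta_2)\le q^n P_n(\Delta_1)$, and symmetrically $P_n(\Delta_1)\le q^n P_n(\Delta_2)$. For the finitely many levels $n<m_0$ the quantities $P_n(\Delta_i)$ range over a finite set bounded below by $(\min_j p_j)^{n}$ and above by $1$, so a constant $c=c(q,m_0)$ absorbs them; this produces the constant in (\ref{comp}). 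Since $q>1$ was arbitrary, $\mu$ is weakly comparable.

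**The main obstacle** I anticipate is making the factorization ``$v=u'w$ with $w$ an edge-path'' completely rigorous when $y$ is a shared endpoint: one must check that a word $u'\in\Lambda_n$ with $S_{u'}[0,1]\supseteq\Delta_1$ and a word $v=u'w\in\Lambda_{n+m}$ with $S_v[0,1]\supseteq\Theta$ really does force $S_w$ to fix the relevant endpoint ($0$ or $1$), i.e. that $\Theta$ sits at the very edge of $S_{u'}[0,1]$ \emph{relative to} $\Delta_1$. This requires care about which of $\Delta_1,\Delta_2$ lies to the left, whether $y=S_u(0)$ or $y=S_u(1)$ for the covering words, and the convention for choosing $\Delta_{N+n}(x)$ in the definition of $Q_n$. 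The boundary cases where $\Delta_1$ or $\Delta_2$ equals $\Delta_n(0)$ or $\Delta_n(1)$ should be handled by the same argument, noting that $\Gamma^L_n=P_n(\Delta_n(0))$ and $\Gamma^R_n=P_n(\Delta_n(1))$ so the edge intervals are already covered by the edge-path bookkeeping. I expect these to be routine once the orientation bookkeeping is set up, but they are the part most prone to sign/side errors.
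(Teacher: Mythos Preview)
There is a genuine gap in your key factorization step, and it is not a matter of orientation bookkeeping. You claim that every $v\in\Lambda_{n+m}$ with $S_v[0,1]\supseteq\Theta$ factors as $v=u'w$ with $u'\in\Lambda_n$ covering $\Delta_1$ and $w$ an edge path. But $\Theta\subseteq\Delta_2$, so the factorization only forces $S_{u'}[0,1]\supseteq\Delta_2$. There are then two kinds of $u'$. If $S_{u'}(0)=y$ then $u'$ covers $\Delta_2$ only, and indeed $w$ must be a left-edge path --- but such $u'$ contribute to $P_n(\Delta_2)$, not $P_n(\Delta_1)$. If $S_{u'}(0)<y$ then $u'$ covers \emph{both} $\Delta_1$ and $\Delta_2$; now $S_{u'}[0,1]$ extends strictly past $y$ on both sides and the condition $S_{u'w}[0,1]\supseteq\Theta$ pins down neither endpoint of $S_w[0,1]$, so $w$ is under no edge constraint whatsoever. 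The most this decomposition yields is
\[
Q_m^{-1}P_n(\Delta_2)\;\le\;P_{n+m}(\Theta)\;\le\;\sum_{u'\in\mathcal{E}}p_{u'}\;+\;\Gamma^L_{\Delta_2,m}\sum_{u'\notin\mathcal{E}}p_{u'}\;\le\;P_n(\Delta_1)+\Gamma^L_{\Delta_2,m}\,P_n(\Delta_2),
\]
where $\mathcal{E}$ is the set of $u'\in\Lambda_n$ covering both intervals. Rearranging gives $(Q_m^{-1}-\Gamma^L_{\Delta_2,m})P_n(\Delta_2)\le P_n(\Delta_1)$, which is useful only when $Q_m\Gamma_m<1$. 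The generalized regular hypothesis gives only $Q_m\Gamma_m\le q'^{\,m}$, and this product need not be small: already for the regular IFS $S_j(x)=x/3+j/3$ with $p_0=p_2=1/4$, $p_1=1/2$ one has $Q_m\Gamma_m\equiv 2$ for every $m$.

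This is precisely why the paper's argument is \emph{inductive} on $n$ and goes \emph{backward} a fixed number $N_0$ of levels rather than forward. One passes from adjacent $\Delta_1,\Delta_2$ at level $n$ to their ancestors $\widehat{\Delta_1},\widehat{\Delta_2}$ at level $n-N_0$ and splits the words $u\in\Lambda_{n-N_0}$ covering $\widehat{\Delta_1}$ into those covering only $\widehat{\Delta_1}$ (these \emph{do} force right-edge continuations, contributing $\le\Gamma^R_{\widehat{\Delta_1},N_0}P_{n-N_0}(\widehat{\Delta_1})$) and those in $\mathcal{E}$ covering both (these contribute $\le P_{n-N_0}(\widehat{\Delta_2})$ with no edge constraint). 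The overlap term is then handled not by edge paths but by the \emph{inductive hypothesis} comparing $P_{n-N_0}(\widehat{\Delta_1})$ to $P_{n-N_0}(\widehat{\Delta_2})$, after which $Q_{N_0}$ transports the bound down to level $n$. Your direct, non-inductive scheme has no mechanism to absorb the $\mathcal{E}$-contribution, and that is the missing idea.
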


\begin{proof}
Fix $q>1$ and choose $N_{0}$ so $\sup_{\Delta }\Gamma_{\Delta ,n}\leq
Q_{n}^{-1}q^{n}/2$ for all $n\geq N_{0}$. Since $P_{n}(\Delta)$ is finite
for all $n$, we can find $c>0$ such that%
\begin{equation*}
\frac{1}{c}q^{-k}P_{k}(\Delta_{2})\;\leq\; P_{k}(\Delta_{1})\;\leq\;
c\,q^{k}P_{k}(\Delta_{2})
\end{equation*}%
whenever $\Delta_{1},\Delta_{2}$ are adjacent net intervals of level $k$ for
all $k=1,...,N_{0}$.

Assume $n\geq N_{0}+1$. We proceed by induction on $n$. Suppose $\Delta
_{1},\Delta_{2}$ are adjacent net intervals of level $n$ where, without loss
of generality, $\Delta_{1}$ is to the left of $\Delta_{2}$. If $\widehat{%
\Delta}_{j}$ is the ancestor of $\Delta_{j}$ at level $n-k$, then $%
P_{n}(\Delta_{j})\sim P_{n-k}(\widehat{\Delta}_{j})$, with constants of
comparability depending only on $k$. Thus we can assume $\Delta_{1},\Delta
_{2}$ have no common ancestor within $N_{0}$ levels.

For $j=1,2$, let $\widehat{\Delta_{j}}$ be the $(n-N_{0})$-level ancestor of 
$\Delta_{j}$. Let $\mathcal{D}_{1}$ denote the words $u\in \Lambda
_{n-N_{0}} $ where $S_{u}[0,1]$ contains $\widehat{\Delta_{1}}$, but not $%
\widehat{\Delta_{2}}.$ Define $\mathcal{D}_{2}$ analogously and let $%
\mathcal{E}$ denote those $u\in \Lambda_{n-N_{0}}$ where $S_{u }[0,1]$
contains both $\widehat{\Delta_{1}}$ and $\widehat{\Delta_{2}}$.

Consider any $\tau\in \Lambda_{n}$ with $S_{\tau}[0,1]$ covering $\Delta
_{1} $. Then $\tau=u w$ where $u\in \Lambda_{n-N_{0}}$, $S_{u}[0,1]$
contains $\widehat{\Delta_{1}}$ and $w$ is a path of level $N_{0}$ of $%
\widehat{\Delta_{1}}$. The word $u$ belongs to either $\mathcal{D}_{1}$ or $%
\mathcal{E}$ and in the former case $w$ is a right edge path. Thus%
\begin{eqnarray*}
P_{n}(\Delta_{1}) &=&\hspace{-2em}\sum_{\substack{ u\in \Lambda_{n-N_{0}} 
\\ w \text{ path of level }N_{0}\text{ of }\widehat{\Delta_{1}}  \\ S_{uw
}[0,1]\supseteq \Delta_{1}}}\hspace{-2.5em}p_{u w} \\
\\
&=&\hspace{-3em}\sum_{\substack{ u \in \mathcal{D}_{1}  \\ w\text{ right
edge path of level }N_{0}  \\ S_{uw}[0,1]\supseteq \Delta_{1}}}\hspace{-3.5em%
}p_{u }p_{w}\hspace{2.0em}+\sum_{\substack{ u\in \mathcal{E}  \\ w\text{
path of level }N_{0}  \\ S_{u w}[0,1]\supseteq \Delta_{1}}}\hspace{-2em}%
p_{u}p_{w} \\
&\leq &\sum_{u \in \mathcal{D}_{1}}p_{u}\Gamma_{\widehat{\Delta _{1}}%
,N_{0}}^{R}+\sum_{u\in \mathcal{E}}p_{u} \\
&\leq &\frac{Q_{N_{0}}^{-1}q^{N_{0}}}{2}\sum_{u\in \mathcal{D}%
_{1}}p_{u}+\sum_{u\in \mathcal{E}}p_{u}.
\end{eqnarray*}%
Now $\sum_{u\in \mathcal{D}_{1}}p_{u}\leq \sum_{u \in \mathcal{D}%
_{1}}p_{u}+\sum_{u\in \mathcal{E}}p_{u }=P_{n-N_{0}}(\widehat{\Delta_{1}})$
and $\sum_{u \in \mathcal{E}}p_{u}\leq \sum_{u\in \mathcal{E}}p_{u}+\sum_{u
\in \mathcal{D}_{2}}p_{u}=P_{n-N_{0}}(\widehat{\Delta_{2}})$. Hence applying
the inductive assumption and using the fact that $P_{n-N_{0}}(\widehat{%
\Delta_{2}})\leq P_{n}(\Delta_{2})Q_{N_{0}}$ we have%
\begin{eqnarray*}
P_{n}(\Delta_{1}) &\leq &\frac{Q_{N_{0}}^{-1}q^{N_{0}}}{2}P_{n-N_{0}}(%
\widehat{\Delta_{1}})+P_{n-N_{0}}(\widehat{\Delta_{2}}) \\
&\leq &\left( \frac{Q_{N_{0}}^{-1}q^{N_{0}}}{2}c\,q^{n-N_{0}}+1\right)
P_{n-N_{0}}(\widehat{\Delta_{2}}) \\
&\leq &\left( \frac{1}{2}Q_{N_{0}}^{-1}c\,q^{n}+1\right) P_{n}(\Delta
_{2})Q_{N_{0}}.
\end{eqnarray*}%
Taking $c\geq 0$ sufficiently large, we obtain the desired conclusion that $%
P_{n}(\Delta_{1})\leq c\,q^{n}P_{n}(\Delta_{2})$.
\end{proof}

\subsection{Dimensions of generalized regular measures\label{sect:dimreg}}

Theorem~B is a consequence of Theorem \ref{thm:theLocDim} below.{\ Other
examples of measures satisfying the assumptions of the Theorem below are
Bernoulli convolutions with contraction ratios being reciprocals of Salem
numbers.}

\begin{theorem}
\label{thm:theLocDim} \label{qA=maxLocal}Suppose $\mu $ is a generalized
regular, self-similar measure, with support $[0,1]$, that satisfies the
asymptotic gap weak separation condition. Then 
\begin{equation*}
\dim \sb{\,\mathrm{qA}}\,\mu =\max \left\{ \overline{\dim }_{\loc}\,\mu (0),%
\overline{\dim }_{\loc}\,\mu (1)\right\} =\max \{\overline{\dim }_{\loc%
}\,\mu (x):x\in \supp\mu \}.
\end{equation*}
\end{theorem}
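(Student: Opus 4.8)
The plan is to establish the theorem by proving two inequalities. By Proposition~\ref{prop:boundedBylocDim} we already have $\dim\sb{\,\mathrm{qA}}\,\mu \geq \max\{\overline{\dim}_{\loc}\,\mu(x):x\in\supp\mu\} \geq \max\{\overline{\dim}_{\loc}\,\mu(0),\overline{\dim}_{\loc}\,\mu(1)\}$, so the content is the reverse inequality
\begin{equation*}
\dim\sb{\,\mathrm{qA}}\,\mu \;\leq\; \max\{\overline{\dim}_{\loc}\,\mu(0),\overline{\dim}_{\loc}\,\mu(1)\} \;=\; \max\left\{\frac{\log s}{\log\lambda},\frac{\log t}{\log\lambda}\right\},
\end{equation*}
using Proposition~\ref{LocDim0} to identify the local dimensions at the endpoints with $s,t$ from \eqref{s,t}. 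Fix $\delta>0$ and a target exponent $u > \max\{\log s/\log\lambda,\log t/\log\lambda\}$; equivalently $\lambda^{u} < \min\{s,t\}$, so there is $q_0>1$ with $\lambda^{u} < q_0^{-1}\min\{s,t\}$. I want to show that for $\delta$ small enough there are constants $C_1,C_2$ with $\mu(B(x,R))/\mu(B(x,r)) \leq C_2 (R/r)^{u}$ whenever $r\leq R^{1+\delta}$, which forces $H(\delta)\leq u$ and hence $\dim\sb{\,\mathrm{qA}}\,\mu \leq u$; letting $u$ decrease to the maximum finishes the proof.

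The key tool is the net-interval machinery of Section~\ref{sect:IFSs} together with weak comparability, which holds here since generalized regular implies weakly comparable. First I would reduce $\mu(B(x,R))$ and $\mu(B(x,r))$ to quantities of the form $P_N(\Delta_N(x))$ and $P_n(\Delta_n(x))$ using \eqref{PnLower}, \eqref{PnUpper}, Proposition~\ref{prop:theAbound}, and the comparison \eqref{Comp}; choosing $R\approx f(N)\lambda^N$ and $r\approx f(n)\lambda^n$, the condition $r\leq R^{1+\delta}$ translates (using $\log f(n)/n\to 0$) into $n\geq N(1+\delta')$ for $\delta'$ slightly smaller than $\delta$. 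So the crux is to bound the ratio $P_N(\Delta_N(x))/P_n(\Delta_n(x)) = P_N(\Delta_N(x))/P_{N+k}(\Delta_{N+k}(x))$ with $k=n-N\geq N\delta'$. Now I would iterate the descent one level at a time: passing from level $N+i$ to $N+i+1$, either $\Delta_{N+i}(x)=\Delta_{N+i+1}(x)$ (the ratio contributes a factor $1$), or $\Delta_{N+i}(x)$ splits, in which case the relevant edge-path sum $\Gamma_{\Delta,1}$ (comparable to $\Gamma_1^L$ or $\Gamma_1^R$) controls how much $P$ can drop. Telescoping, and using that along a long descent the total drop is governed by a product of edge-path sums, one gets roughly $P_N(\Delta_N(x))/P_{N+k}(\Delta_{N+k}(x)) \leq Q_k$ combined with a factor like $(\min\{s,t\})^{-c k}$ coming from the edge contributions; the generalized regularity hypothesis $\lim_n Q_n q^{-n}\sup_\Delta \Gamma_{\Delta,n}=0$ is exactly what bounds the interplay between these two pieces subexponentially in $k$. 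The upshot should be a bound of the form $P_N(\Delta_N(x))/P_{N+k}(\Delta_{N+k}(x)) \leq C\,q_0^{k}(\min\{s,t\})^{-k(1+o(1))}$, and since $k\geq N\delta'$ one can absorb the subexponential and $\kappa_n$-type errors for $N$ large.

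Finally I would convert this into the desired scaling estimate: with $R\asymp f(N)\lambda^N$, $r\asymp f(n)\lambda^n$ we have $R/r \asymp \lambda^{-(n-N)}$ up to subexponential factors, so
\begin{equation*}
\frac{\mu(B(x,R))}{\mu(B(x,r))} \;\leq\; C\,\big(q_0^{-1}\min\{s,t\}\big)^{-(n-N)(1+o(1))} \;\leq\; C\,\lambda^{-u(n-N)(1+o(1))} \;\leq\; C'\left(\frac{R}{r}\right)^{u}
\end{equation*}
once $N$ (hence $n$) is large and $\delta$ small, with the finitely many small scales $R\geq f(N_0)(1-\lambda)\lambda^{N_0}$ handled by adjusting $C'$ exactly as in the last paragraph of the proof of Theorem~\ref{qdimpliesqA}. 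The main obstacle I anticipate is the bookkeeping in the telescoping descent: carefully handling the "stalling" levels where the net interval does not subdivide, tracking which endpoint ($0$-type or $1$-type) each split contributes to so that one genuinely picks up $\Gamma^L$ or $\Gamma^R$ rather than a cruder bound, and showing that the generalized-regular condition is strong enough to keep the accumulated constants subexponential in $k=n-N$ uniformly in $x$. I would also need to be slightly careful that $\Delta_N(x)$ might itself be (or be adjacent to) $\Delta_N(0)$ or $\Delta_N(1)$, but near the endpoints the estimates of Proposition~\ref{LocDim0} give the bound directly, so that case is easier rather than harder.
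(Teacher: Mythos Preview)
Your overall architecture matches the paper's: the lower bound is Proposition~\ref{prop:boundedBylocDim}, and for the upper bound you correctly reduce $\mu(B(x,R))/\mu(B(x,r))$ to $P_N(\Delta_N(x))/P_n(\Delta_n(x))$ via weak comparability \eqref{Comp} and \eqref{PnLower}, with the subexponential factors $f(\cdot),\kappa_{\cdot}$ absorbed using the gap $n-N\gtrsim \delta N$. The target inequality $P_N/P_{N+k}\leq C\,q_0^{k}(\min\{s,t\})^{-k(1+o(1))}$ is also the right one.

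However, the mechanism you propose for obtaining that central bound is confused, and as written would not work. Two points. First, $P_N(\Delta_N(x))/P_{N+k}(\Delta_{N+k}(x))\leq Q_k$ is immediate from the \emph{definition} of $Q_k$; no telescoping is needed, and a level-by-level telescope only yields the much cruder $Q_1^{k}$. Second, and more seriously, your claim that when $\Delta_{N+i}(x)$ splits the one-step ratio $P_{N+i}/P_{N+i+1}$ is controlled by the edge-path sum $\Gamma_{\Delta,1}$ is incorrect: the edge paths govern comparisons between \emph{adjacent} net intervals (this is exactly how they enter the proof that generalized regular implies weakly comparable), not the descent along a single nested chain, where the drop can be as bad as $(\min_j p_j)^{-\Theta}$ by Lemma~\ref{PnRel}. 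So there is no way to extract $s$ or $t$ from your telescoping.

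The paper's argument avoids all of this. Having $P_N/P_{N+k}\leq Q_k$ by definition, one invokes generalized regularity \emph{once}: from $Q_m\Gamma_m^{L}q^{-m}\to 0$ one gets $Q_m\leq q^{m}(\Gamma_m^{L})^{-1}$ for $m$ large, and then the definition $s=\liminf(\Gamma_m^{L})^{1/m}$ gives $\Gamma_m^{L}\geq s^{m(1+\varepsilon/2)}$. This yields $P_N/P_{N+k}\leq q^{k}s^{-k(1+\varepsilon/2)}$ directly, with no tracking of $0$-type versus $1$-type splits and no special treatment near the endpoints. The ``main obstacle'' you anticipate therefore does not arise; the actual obstacle is recognising that generalized regularity is to be used globally on $Q_k$ rather than locally at each step.
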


\begin{corollary}
\label{cor:regqA}If $\mu $ is a regular, equicontractive self-similar
measure with full support and satisfying the weak separation condition, then 
\begin{equation*}
\dim \sb{\,\mathrm{qA}}\,\mu =\max \{\overline{\dim }_{\loc}\,\mu (x):x\in %
\supp\mu \}.
\end{equation*}
\end{corollary}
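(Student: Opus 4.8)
The plan is to obtain Corollary~\ref{cor:regqA} as an immediate consequence of Theorem~\ref{thm:theLocDim}; all that is needed is to verify that a regular, equicontractive self-similar measure with full support satisfying the weak separation condition meets the three standing hypotheses of that theorem, namely that $\mu$ is generalized regular, that $\supp\mu = [0,1]$, and that $\mu$ satisfies the asymptotic gap weak separation condition.

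The support hypothesis is free: under the standing conventions of Section~\ref{sect:IFSs} the attractor is $[0,1]$ and a self-similar measure has support equal to its attractor, so ``full support'' is precisely $\supp\mu = [0,1]$. For generalized regularity I would invoke Corollary~\ref{RegimpliesComp}(ii) directly: since $\mu$ is equicontractive and regular, $p_0 = p_{m-1} = \min_j p_j$, which is exactly the hypothesis there. If one prefers a self-contained check, note that in the equicontractive case $\Lambda_n$ consists of the length-$n$ words, the only left-edge (resp.\ right-edge) path of level $n$ is the constant word $\overline{0}_n$ (resp.\ $\overline{1}_n$), so $\Gamma_n = p_0^n + p_{m-1}^n$, while $Q_n \le (\min_j p_j)^{-n} = p_0^{-n}$; hence $Q_n q^{-n}\Gamma_n \le 2q^{-n} \to 0$ for every $q>1$ and Corollary~\ref{RegimpliesComp}(i) gives generalized regularity.

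For the separation condition I would use the elementary inclusion $\mathrm{WSC}\subset\mathrm{AGWSC}$ recorded in Section~\ref{sect:IFSs}: the weak separation condition supplies a constant $a > 0$ (which we may take $a < \lambda$) with $|S_u(0)-S_w(0)| \ge a\lambda^n$ and $|S_u(1)-S_w(1)| \ge a\lambda^n$ whenever $u,w\in\Lambda_n$ and $S_u(0)\ne S_w(0)$. Taking the constant function $f(n)\equiv a$, it is positive and non-increasing with $(\log f(n))/n = (\log a)/n \to 0$, so Definition~\ref{agwsc} is satisfied.

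With the three hypotheses in hand, Theorem~\ref{thm:theLocDim} applies and yields $\dimqa\mu = \max\{\overline{\dim}_{\loc}\mu(x) : x\in\supp\mu\}$, which is the assertion. I expect no genuine obstacle at this level: the entire weight of the statement sits inside Theorem~\ref{thm:theLocDim} and the results on generalized regular measures in Sections~\ref{sect:dimreg}--\ref{sect:dimwc}. The only points deserving a sentence of care are matching the stated notion of ``regular'' ($p_0 = p_{m-1} = \min_j p_j$) to the hypothesis of Corollary~\ref{RegimpliesComp}(ii), and confirming that the constant function $f\equiv a$ is an admissible choice of subexponential gap function in Definition~\ref{agwsc}.
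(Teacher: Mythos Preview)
Your proposal is correct and matches the paper's approach: the corollary is presented there without a separate proof, as an immediate consequence of Theorem~\ref{thm:theLocDim} once one invokes Corollary~\ref{RegimpliesComp}(ii) (regular equicontractive $\Rightarrow$ generalized regular) and the inclusion $\mathrm{WSC}\subset\mathrm{AGWSC}$. Your additional self-contained verification of generalized regularity and of $f\equiv a$ as an admissible gap function are exactly the ingredients the paper records elsewhere.
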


\begin{remark}
In Example \ref{notfull} we see this can fail if the measure does not have
full support.
\end{remark}

\begin{proof}[Proof of Theorem \ref{thm:theLocDim}]
Without loss of generality, assume $\max \left\{ \overline{\dim }_{\loc%
}\,\mu (0),\overline{\dim }_{\loc}\,\mu (1)\right\} =\overline{\dim }_{\loc%
}\,\mu (0)$, which by Proposition \ref{LocDim0} is equal to 
\begin{equation*}
d=\log s/\log \lambda \text{ where }s=\liminf \left( \Gamma _{n}^{L}\right)
^{1/n}.
\end{equation*}

First, we will verify that for small enough $\varepsilon >0$ and every $%
\delta >0$ there are constants $C,C_{0}$, depending on $\varepsilon $, $%
\delta $, such that if $r\leq R^{1+\delta }\leq R\leq C_{0}$, then for all $%
x $,%
\begin{equation*}
\frac{\mu (B(x,R))}{\mu (B(x,r))}\leq C\left( \frac{R}{r}\right)
^{d(1+\varepsilon )}.
\end{equation*}%
Consequently, $\dim \sb{\,\mathrm{qA}}\,\mu \leq d$.

Fix $\delta ,\varepsilon >0$ and $x\in \lbrack 0,1]$. Assume $r\leq
R^{1+\delta }\leq R\leq C_{0}$ where $C_{0}$ will be specified later. Choose
integers $N,n$ such that $f(N+1)\lambda ^{N+1}<R\leq f(N)\lambda ^{N}$ and $%
\lambda ^{n}\leq r<\lambda ^{n-1}$. By (\ref{PnLower}), 
\begin{equation*}
\mu (B(x,r))\geq \mu (B(x,\lambda ^{n}))\geq P_{n}(\Delta _{n}(x)).
\end{equation*}%
Since generalized regular measures are weakly comparable, 
\begin{equation*}
\mu (B(x,R))\leq \mu (B(x,f(N)\lambda ^{N}))\leq \mathcal{C}%
q^{N}P_{N}(\Delta _{N}(x))
\end{equation*}%
for $\mathcal{C}$ depending on $q$, as per \eqref{Comp}.

We see that 
\begin{equation*}
\frac{\mu (B(x,R))}{\mu (B(x,r))}\leq \frac{\mathcal{C}q^{N}P_{N}(\Delta
_{N}(x))}{P_{n}(\Delta _{n}(x))}\leq \mathcal{C}q^{N}Q_{n-N}.
\end{equation*}%
As $\mu $ is generalized regular, $\Gamma _{m}^{L}Q_{m}q^{-m}\rightarrow 0$
as $m\rightarrow \infty $. Thus, we can choose $N_{1}$ such that if $%
m=n-N\geq \delta N_{1}$ then $Q_{m}\leq q^{m}(\Gamma _{m}^{L})^{-1}$.
Therefore 
\begin{equation*}
\frac{\mu (B(x,R))}{\mu (B(x,r))}\leq \frac{q^{m}\mathcal{C}q^{N}}{\Gamma
_{m}^{L}}=\mathcal{C}q^{n}(\Gamma _{m}^{L})^{-1}.
\end{equation*}

As noted in Proposition \ref{LocDim0}, $s=\liminf \left( \Gamma
_{n}^{L}\right) ^{1/n}\in (0,1)$. Thus we can choose $N_{2}$ so that if $%
m\geq \delta N$ for some $N\geq N_{2}$, then 
\begin{equation*}
\Gamma _{m}^{L}\geq s^{m(1+\varepsilon /2)}.
\end{equation*}%
We require that $C_{0}$ be so small that if $R\leq C_{0}$, then $R\leq
f(N)\lambda ^{N}$ for $N\geq \max (N_{1},N_{2})$. Hence for $r\leq
R^{1+\delta }\leq R\leq C_{0}$ we have%
\begin{equation*}
\frac{\mu (B(x,R))}{\mu (B(x,r))}\leq \mathcal{C}q^{n}s^{-m(1+\varepsilon
/2)}.
\end{equation*}%
As $s=\lambda ^{d}$, for any fixed $\varepsilon >0$, there exists $c>0$ such
that 
\begin{equation}
\left( \frac{R}{r}\right) ^{d(1+\varepsilon )}\geq \lambda
^{(N-n+2)d(1+\varepsilon )}f(N+1)^{d(1+\varepsilon
)}=cf(N+1)^{d(1+\varepsilon )}s^{-m(1+\varepsilon )}.  \label{R/r}
\end{equation}%
We note that 
\begin{equation*}
\mathcal{C}q^{n}s^{-m(1+\varepsilon /2)}\leq cf(N+1)^{d(1+\varepsilon
)}s^{-m(1+\varepsilon )}
\end{equation*}%
if and only if 
\begin{equation*}
C^{\prime }q^{n}f(N+1)^{-d(1+\varepsilon )}\leq s^{-m\varepsilon /2},
\end{equation*}%
where $C^{\prime }$ is the appropriate constant. Taking logarithms, this is
equivalent to 
\begin{equation*}
\frac{1}{m}\left( \log C^{\prime }\text{ }+n\log q-d(1+\varepsilon )\log
f(N+1)\right) \leq \varepsilon \left\vert \log s\right\vert /2.
\end{equation*}%
Now, $m=n-N\geq \delta n/(1+\delta )$, so $(n\log q)/m\leq (1+\delta )(\log
q)/\delta $ and $(1/m)\log f(N+1)\rightarrow 0$. Thus with a suitable choice
of $q$ close to $1$ (depending on $\varepsilon ,\delta )$ and large enough $%
N $, we can achieve this inequality. With this further constraint on $C_{0}$
it then follows that for a suitable constant $c$, we have%
\begin{equation*}
\frac{\mu (B(x,R))}{\mu (B(x,r))}\leq c\left( \frac{R}{r}\right)
^{d(1+\varepsilon )}\text{ for all }r\leq R^{1+\delta }\leq R\leq C_{0},
\end{equation*}%
and this implies $\dim \sb{\,\mathrm{qA}}\,\mu \leq d(1+\varepsilon )$ for
all $\varepsilon >0$. Hence $\dim \sb{\,\mathrm{qA}}\,\mu \leq d$

By Proposition \ref{prop:boundedBylocDim} we have $\dim \sb{\,\mathrm{qA}}%
\,\mu \geq \sup \{\overline{\dim }_{\loc}\,\mu (x):x\in \supp\mu \}\geq d,$
giving the lower bound and hence equality.
\end{proof}

In contrast, generalized regular measures satisfying the AGWSC and having
full support need not have finite Assouad dimension.

\begin{example}
\label{NotDoubling} Consider the IFS $S_{j}(x)=x/3+d_{j}$ where $d_{0}=0$, $%
d_{1}=1/6$, $d_{2}=1/3$, $d_{3}=2/3$ and probabilities each $1/4$. This IFS
is equicontractive, finite type, regular and of full support. Thus it is
generalized regular and hence weakly comparable. Applying Theorem \ref%
{qA=maxLocal} gives $\dim _{qA}\mu =\dim _{loc}\mu (0)=\log 4/\log 3$.

However, $\mu $ is not doubling and consequently, the Assouad dimension of $%
\mu $ is infinite. To see this, one can show that $1/2$ is the boundary
point of two level $n$ net intervals for each level $n$. The two intervals
have the same length, $3^{-n}/2$. Using the techniques developed in \cite%
{HHN} it can be shown that the $\mu $-measure of the right interval is at
most $c_{1}4^{-n}$ for some constant $c_{1}>0$, while the left interval has
measure at least $c_{2}n4^{-n}$ for some $c_{2}>0$. With $R=(\frac{3}{4}%
)3^{-n}$, $r=(\frac{1}{4})3^{-n}$ and $x_{n}$ the midpoint of the right net
interval of level $n$, we have $\mu (B(x_{n},R))\geq c_{2}n4^{-n}$ and $\mu
(B(x_{n},r))\leq c_{1}4^{-n}$, while $R/r=3,$ which proves $\mu $ is not
doubling.
\end{example}

\subsection{Dimensions of weakly comparable measures\label{sect:dimwc}}

For the larger class of weakly comparable measures we can also obtain bounds
on the quasi-Assouad dimension. These bounds wil be used to show in Example %
\ref{wkcomp} that the generalized regular condition in Theorem~\ref%
{thm:theLocDim} is not necessary.

\begin{theorem}
\label{LocComp}Suppose $\mu$ is weakly comparable and satisfies the
asymptotic gap weak separation property. Then the quasi-Assouad dimension of 
$\mu$ is bounded by 
\begin{equation}
\dim\sb{\,\mathrm{qA}}\,\mu \leq \limsup_{n\to\infty}\frac{-\log Q_{n}}{n
\log \lambda }<\infty ,  \label{qdimQ}
\end{equation}%
and for all $x\in \lbrack 0,1]$, 
\begin{eqnarray*}
\overline{\dim}_{\loc}\,\mu (x) &=&\limsup_{n\to\infty}\frac{\log
P_{n}(\Delta_{n}(x))}{n\log \lambda } \\
\underline{\dim}_{\loc}\,\mu (x) &=&\liminf_{n\to\infty}\frac{\log
P_{n}(\Delta_{n}(x))}{n\log \lambda }.
\end{eqnarray*}
\end{theorem}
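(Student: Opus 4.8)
The plan is to prove the two local‑dimension identities first and then extract the bound on $\dimqa\mu$ from them together with the estimates already assembled in Sections~\ref{sect:IFSs} and~\ref{sect:comparable}. The common engine is a two‑sided comparison of $\mu(B(x,r))$ with $P_{n}(\Delta_{n}(x))$: for $x\in[0,1]$, $q>1$, and $\lambda^{n+1}<r\le\lambda^{n}$,
\begin{equation*}
A\,P_{n}(\Delta_{n}(x))\ \le\ \mu(B(x,r))\ \le\ c\,q^{n}A^{-\kappa_{n}}\,P_{n}(\Delta_{n}(x)),\qquad A=(\min_{j}p_{j})^{\Theta},\ c=c(q)>0,
\end{equation*}
with $\Theta$ as in \eqref{eq:ThetaDefinition}. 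For the left inequality, \eqref{PnLower} and Lemma~\ref{PnRel}(a) give $\mu(B(x,r))\ge\mu(B(x,\lambda^{n+1}))\ge P_{n+1}(\Delta_{n+1}(x))\ge A\,P_{n}(\Delta_{n}(x))$. For the right inequality, the inclusion $B(x,\lambda^{n})\subseteq B(x,f(n-\kappa_{n})\lambda^{n-\kappa_{n}})$ used already in the proof of Proposition~\ref{LocDim0}, followed by the upper bound in \eqref{Comp} (this is where weak comparability enters) and Lemma~\ref{PnRel}(a), yields $\mu(B(x,r))\le\mu(B(x,\lambda^{n}))\le c\,q^{n-\kappa_{n}}P_{n-\kappa_{n}}(\Delta_{n-\kappa_{n}}(x))\le c\,q^{n}A^{-\kappa_{n}}P_{n}(\Delta_{n}(x))$.

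\textbf{The local‑dimension identities.} Taking $-\log$ of the sandwich and dividing by $-\log r$ (which lies in $[\,n|\log\lambda|,(n+1)|\log\lambda|\,)$), and recalling $\kappa_{n}/n\to0$, one sees that $\frac{\log\mu(B(x,r))}{\log r}$ differs from $\frac{\log P_{n}(\Delta_{n}(x))}{n\log\lambda}$ by at most $\frac{\log q}{|\log\lambda|}+o(1)$ as $r\to0$. Since the associated $n$ runs through all large integers, passing to $\limsup_{r\to0}$ (respectively $\liminf_{r\to0}$), using the subsequence $r=\lambda^{n}$ for whichever of the two bounds carries the $q^{n}$ factor, and then letting $q\downarrow1$, gives $\overline{\dim}_{\loc}\,\mu(x)=\limsup_{n}\frac{\log P_{n}(\Delta_{n}(x))}{n\log\lambda}$ and $\underline{\dim}_{\loc}\,\mu(x)=\liminf_{n}\frac{\log P_{n}(\Delta_{n}(x))}{n\log\lambda}$. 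For $x\in\{0,1\}$ this is consistent with Proposition~\ref{LocDim0}.

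\textbf{The bound on $\dimqa\mu$.} Put $s_{0}=\limsup_{n}\frac{-\log Q_{n}}{n\log\lambda}$; since $Q_{n}\le Q_{1}^{n}$ we have $s_{0}\le\frac{\log Q_{1}}{|\log\lambda|}<\infty$, which settles the finiteness claim. Fix $\delta>0$ and $\varepsilon>0$. For small $r\le R^{1+\delta}\le R$, choose $N$ with $f(N+1)\lambda^{N+1}<R\le f(N)\lambda^{N}$ and $n$ with $\lambda^{n}\le r<\lambda^{n-1}$. Then $\mu(B(x,r))\ge P_{n}(\Delta_{n}(x))$ by \eqref{PnLower}, while weak comparability and \eqref{Comp} give $\mu(B(x,R))\le\mu(B(x,f(N)\lambda^{N}))\le\mathcal{C}q^{N}P_{N}(\Delta_{N}(x))$, so, with $\Delta_{N}(x)$ the level‑$N$ ancestor of $\Delta_{n}(x)$,
\begin{equation*}
\frac{\mu(B(x,R))}{\mu(B(x,r))}\ \le\ \mathcal{C}\,q^{N}\,\frac{P_{N}(\Delta_{N}(x))}{P_{n}(\Delta_{n}(x))}\ \le\ \mathcal{C}\,q^{N}Q_{n-N}.
\end{equation*}
From $\lambda^{n}\le r\le R^{1+\delta}\le(f(N)\lambda^{N})^{1+\delta}$ and $f(N)<1$ we deduce $n\ge(1+\delta)N$, so $m:=n-N\ge\delta N$; hence $m\to\infty$ as $R\to0$ and $N/m\le 1/\delta$. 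Also $R/r>f(N+1)\lambda^{\,2-m}$. Setting $s=s_{0}+\varepsilon$, the inequality $\mathcal{C}q^{N}Q_{m}\le c(R/r)^{s}$ reads, after taking logarithms and dividing by $m$,
\begin{equation*}
\frac{N\log q}{m}+\frac{\log Q_{m}}{m}-\frac{s\log f(N+1)}{m}\ \le\ \frac{\log c}{m}+s\,|\log\lambda|,
\end{equation*}
in which $\frac{N\log q}{m}\le\frac{\log q}{\delta}$, $\frac{\log Q_{m}}{m}\le(s_{0}+\varepsilon/2)|\log\lambda|$ for $m$ large, and $\frac{\log f(N+1)}{m}\to0$ since $|\log f(N+1)|=o(N)=o(m)$. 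Choosing $q$ near $1$ with $\frac{\log q}{\delta}<\frac{\varepsilon}{4}|\log\lambda|$ makes this hold uniformly in $x$ for all $R$ below some threshold $C_{1}$; thus $H(\delta)\le s_{0}+\varepsilon$, and since $\varepsilon>0$ is arbitrary, $\dimqa\mu=\lim_{\delta\to0}H(\delta)\le s_{0}$.

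\textbf{Main obstacle.} The substance lies in juggling the three subexponential gauges $\kappa_{n}$, $\log f(n)$, and $\log Q_{m}$ so that each is genuinely negligible ($o(n)$, respectively $o(m)$) inside the estimates, and in aligning the scales $r\asymp\lambda^{n}$ and $R\asymp f(N)\lambda^{N}$ so that the gap $r\le R^{1+\delta}$ produces $m\ge\delta N$ with no hidden loss; once these are controlled, only elementary logarithmic bookkeeping remains.
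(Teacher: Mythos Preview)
Your proof is correct and follows essentially the same strategy as the paper's: both halves rest on the two-sided comparison of $\mu(B(x,\cdot))$ with $P_n(\Delta_n(x))$ furnished by \eqref{PnLower} and \eqref{Comp}, followed by logarithmic bookkeeping in which the subexponential terms $\kappa_n$, $\log f(N)$, and $\log q$ are absorbed. The only differences are cosmetic: you treat the local-dimension identities first and sandwich $\mu(B(x,r))$ at the scale $\lambda^{n+1}<r\le\lambda^n$ (bringing in \eqref{Comp} via $\lambda^n\le f(n-\kappa_n)\lambda^{n-\kappa_n}$), whereas the paper works directly at the scale $f(n)\lambda^n$ and then transfers to general $r$; and for the $\dimqa$ bound you bracket $r$ by $\lambda^n\le r<\lambda^{n-1}$, which lets you invoke \eqref{PnLower} without the $\kappa_n$ shift that the paper picks up from its choice $f(n)\lambda^n\le r<f(n-1)\lambda^{n-1}$.
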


\begin{proof}
We first prove the upper bound. Let%
\begin{equation*}
d=\limsup_{n}\frac{-\log Q_{n}}{n\log \lambda}
\end{equation*}
It will be enough to prove that $\dim\sb{\,\mathrm{qA}}\,\mu \leq
d+\varepsilon$ for all $\varepsilon >0$. Choose $M$ such that 
\begin{equation}  \label{eq:estimateQ}
(1/m)\log Q_{m}\leq -(d+\varepsilon /2)\log \lambda,
\end{equation}
for all $m\geq M$.

Recall that $f(n)$ is decreasing in $n$. Thus, given $r\leq R^{1+\delta
}\leq R\leq C_{0}$ we can choose $N$ and $n$ such that 
\begin{equation*}
f(N+1)\lambda ^{N+1}<R\leq f(N)\lambda ^{N}
\end{equation*}%
and%
\begin{equation*}
f(n)\lambda ^{n}\leq r<f(n-1)\lambda ^{n-1},
\end{equation*}%
where $C_{0}$ is chosen sufficiently small to ensure that $N+1\leq n-1$ and $%
n-N\geq M$.

Take $q>1$. Then (\ref{Comp}) yields 
\begin{equation*}
\mu (B(x,R))\quad \leq \quad \mu (B(x,f(N)\lambda ^{N}))\quad \leq \quad
Cq^{N}P_{N}(\Delta _{N}(x))
\end{equation*}%
and 
\begin{equation*}
\mu (B(x,r))\quad \geq \quad \mu (B(x,f(n)\lambda ^{n}))\quad \geq \quad
P_{n+\kappa _{n}}(\Delta _{n+\kappa _{n}}(x)),
\end{equation*}%
where $C>0$ depends on $q$. Thus%
\begin{equation*}
\frac{\mu (B(x,R))}{\mu (B(x,r))}\;\leq \;\frac{C\,q^{N}P_{N}(\Delta _{N}(x))%
}{P_{n+\kappa _{n}}(\Delta _{n+\kappa _{n}}(x))}\;\leq \;C\,q^{N}Q_{n+\kappa
_{n}-N}(x).
\end{equation*}%
Using \eqref{eq:estimateQ} and the fact that $\lambda ^{\kappa _{n}}\geq
\lambda f(n)$, we have%
\begin{equation*}
Cq^{N}Q_{n+\kappa _{n}-N}(x)\leq Cq^{N}\lambda ^{-(n+\kappa
_{n}-N)(d+\varepsilon /2)}\leq Cq^{N}\lambda ^{-(n-N)(d+\varepsilon
/2)}f(n)^{-(d+\varepsilon /2)},
\end{equation*}%
redefining $C>0$ as appropriate. Further, 
\begin{equation*}
\left( \frac{R}{r}\right) ^{d+\varepsilon }\geq \left( \frac{f(N+1)}{f(n-1)}%
\right) ^{d+\varepsilon }\lambda ^{(N-n+1)(d+\varepsilon )}\geq c\lambda
^{(N-n)(d+\varepsilon )}.
\end{equation*}%
Thus, in order to satisfy 
\begin{equation*}
\frac{\mu (B(x,R))}{\mu (B(x,r))}\leq c\left( \frac{R}{r}\right)
^{d+\varepsilon }
\end{equation*}%
for some constant $c$, it will be enough to satisfy the inequality%
\begin{equation*}
q^{N}\lambda ^{-(n-N)(d+\varepsilon /2)}f(n)^{-(d+\varepsilon /2)}\leq
\lambda ^{(N-n)(d+\varepsilon )}\text{ }
\end{equation*}%
for all $n\geq (1+\delta )N$, $N$ sufficiently large and for some suitable $%
q>1$. Equivalently, 
\begin{equation*}
(d+\varepsilon /2)\frac{\left\vert \log f(n)\right\vert }{n}\leq \frac{%
(n-N)\varepsilon }{2n}\left\vert \log \lambda \right\vert -\frac{N}{n}\log q%
\text{.}
\end{equation*}

Since $N/n\leq (1+\delta )^{-1}$, the right hand side of the latter
expression dominates 
\begin{equation*}
\left( 1-\frac{1}{1+\delta }\right) \frac{\varepsilon }{2}\left\vert \log
\lambda \right\vert -\frac{1}{1+\delta }\log q,
\end{equation*}%
and this is at least%
\begin{equation*}
\left( 1-\frac{1}{1+\delta }\right) \frac{\varepsilon }{4}\left\vert \log
\lambda \right\vert
\end{equation*}%
if we choose $q$ close enough to $1$. Since $\log f(n)/n\rightarrow 0$ as $%
n\rightarrow \infty$, we can ensure that this quantity dominates $%
(d+\varepsilon /2)\left\vert \log f(n)\right\vert /n$ for large enough $n$.
Suitably redefining $C_{0}>0$, if necessary, will guarantee that $n\geq
(1+\delta )N$ is sufficiently large to be sure this is true. From these
inequalities it follows that the quasi-Assouad dimension of $\mu$ is at most 
$d+\varepsilon$ as required.

\vskip1em We now turn to proving the equalities involving the local
dimension. For each $q>1$ we have, by \eqref{Comp}, 
\begin{equation*}
(Cq^{n})^{-1}P_{n}(\Delta _{n}(x))\leq \mu (B(x,f(n)\lambda ^{n}))\leq
Cq^{n}P_{n}(\Delta _{n}(x))
\end{equation*}%
where $C>0$ depends on $q$. Thus%
\begin{eqnarray*}
\frac{\log q}{\log \lambda }+\liminf_{n\rightarrow \infty }\frac{\log
P_{n}(\Delta _{n}(x))}{n\log \lambda } &\leq &\liminf_{n\rightarrow \infty }%
\frac{\log \mu (B(x,f(n)\lambda ^{n}))}{\log \lambda ^{n}} \\
&\leq &\liminf_{n\rightarrow \infty }\frac{\log P_{n}(\Delta _{n}(x))}{n\log
\lambda }-\frac{\log q}{\log \lambda }.
\end{eqnarray*}%
Since the inequality above holds for all $q>1$, we deduce 
\begin{equation*}
\liminf_{n\rightarrow \infty }\frac{\log \mu (B(x,f(n)\lambda ^{n}))}{\log
\lambda ^{n}}=\liminf_{n\rightarrow \infty }\frac{\log P_{n}(\Delta _{n}(x))%
}{n\log \lambda }
\end{equation*}

Given any $r>0$, choose $n$ such that $f(n+1)\lambda ^{n+1}<r\leq
f(n)\lambda ^{n}$. Since $\log f(n)/n\rightarrow 0$, we deduce 
\begin{align*}
&\liminf_{r\rightarrow 0}\frac{\log \mu (B(x,r)}{\log r}=\liminf_{n\to\infty}%
\frac{\log \mu (B(x,f(n)\lambda ^{n}))}{\log f(n)\lambda^{n}} \\
=\;& \liminf_{n\to\infty}\frac{\log\mu(B(x,f(n) \lambda^n))}{\log \lambda^n}%
=\liminf_{n\to\infty}\frac{\log P_{n}(\Delta_{n}(x))}{n\log \lambda }.
\end{align*}%
Thus $\underline{\dim}_{\loc}\,\mu (x)$ is as claimed.

The arguments for the upper local dimension are identical and left to the
reader.
\end{proof}

\begin{example}
\label{wkcomp}A measure $\mu $ with $\dim \sb{\,\mathrm{qA}}\,\mu =\sup_{x}\{%
\overline{\dim }_{\loc}\mu (x)\}$ that is not generalized regular: Consider
the IFS $S_{0}(x)=x/3$, $S_{1}(x)=x/3+1/3$, $S_{2}(x)=x/3+2/3$, with
probabilities $p_{0}=p_{2}=2/5$, $p_{1}=1/5$ and associated self-similar
measure $\mu $. This is a comparable, but not generalized regular, iterated
function system. Note that $Q_{n}(x)\leq 5^{n}$ for all $x$ and $n$. Thus
Theorem \ref{LocComp} yields that $\dim \sb{\,\mathrm{qA}}\,\mu \leq \log
5/\log 3$, which coincides with $\dim _{\loc}\,\mu (1/2)$, the maximum upper
local dimension. Since the quasi-Assouad dimension is always an upper bound
on the upper local dimension of the measure, we have equality here.
\end{example}

It would be desirable to know if all weakly comparable measures $\mu $ have
the property that $\dim \sb{\,\mathrm{qA}}\,\mu =\sup \{\overline{\dim }_{%
\loc}\mu (x)\;:\;x\in \lbrack 0,1]\}$.

\subsection{An equicontractive, regular, self-similar measure without full
support\label{notfull}}

\mbox{}

\noindent Throughout this paper, we have assumed that the self-similar
measure has the full interval $[0,1]$ as its support. The purpose of the
final example is to show that even an equicontractive, regular measure
without full support need not have finite quasi-Assouad dimension: Consider
the iterated function system with $S_{j}(x)=x/5+d_{j}$ where $d_{0}=0$, $%
d_{1}=1/10$, $d_{2}=2/5$, $d_{3}=4/5$ and probabilities $%
p_{0}=p_{1}=p_{3}=1/6$, $p_{2}=1/2$. This IFS is equicontractive, finite
type and regular, but the self-similar set is clearly not the full interval $%
[0,1]$. Indeed, the subintervals $(3/10,2/5)$ and $(3/5,4/5)$ are in the
complement of the self-similar set.

As explained in \cite{HHN}, we can associate with each net interval of a
finite type IFS a finite tuple called the characteristic vector. The
characteristic vector contains all the information needed to essentially
determine the measure of the net interval given that of its parent net
interval. In fact, finite type is characterized by the property that there
are only finitely many of these so-called characteristic vectors and,
furthermore, each net interval $\Delta _{n}$ of level $n$ can be uniquely
identified by the $(n+1)$-tuple of characteristic vectors, $(\gamma
_{j})_{j=0}^{n},$ where $\gamma _{n}$ is the characteristic vector of $%
\Delta _{n}$, $\gamma _{n-1}$ is the characteristic vector of its parent net
interval, $\Delta _{n-1},$ etc.

This IFS has six characteristic vectors, which we label as $1,2,3a,3b,3c,4$.
Any net interval of level $n-1$ with characteristic vector $3a,3b$, or $3c$
has four children, each of length $5^{-n}/2$. From left to right these are $%
3a,3b,4,3c$, where $4$ and $3c$ are separated by a gap. Fix $\varepsilon >0$
and take any large $N$ and $n=\lceil (1+\delta )N\rceil $. Consider the
level $n$ net interval, $\Delta _{n},$ identified with the tuple, 
\begin{equation*}
(1,\underbrace{3a,3a,...,3a}_{N},\underbrace{3b,3b,...,3b}_{n-N}),
\end{equation*}
and its ancestor $\Delta _{N}$ of level $N$. Let $x_{n}$ denote the midpoint
of $\Delta _{n}$.

Using the techniques of \cite{HHN} it can be shown that%
\begin{equation*}
\mu (\Delta _{N})\sim \left\Vert \lbrack 1/6\text{ }1/6]%
\begin{bmatrix}
1/6 & 0 \\ 
0 & 1/2%
\end{bmatrix}%
^{N}\right\Vert \sim 2^{-N},
\end{equation*}%
while 
\begin{equation*}
\mu (\Delta _{n})\sim \left\Vert \lbrack 1/6\text{ }1/6]%
\begin{bmatrix}
1/6 & 0 \\ 
0 & 1/2%
\end{bmatrix}%
^{N}%
\begin{bmatrix}
1/6 & 1/6 \\ 
0 & 0%
\end{bmatrix}%
^{n-N}\right\Vert \sim 6^{-n},
\end{equation*}%
where the symbol $\sim $ means bounded above and below by some constant
multiple. Taking $R=5^{-N}/2$ and $r=5^{-n}/4$ it follows that 
\begin{equation*}
\frac{\mu (B(x_{n},R))}{\mu (B(x_{n},r))}\geq c_{1}\frac{2^{-N}}{6^{-n}}\geq
c_{1}^{\prime }\frac{6^{(1+\delta )N}}{2^{N}}=c_{1}^{\prime \prime
}3^{N}6^{\delta N},
\end{equation*}%
while $R/r\leq c_{2}5^{\delta N}$. Consequently, $\dim\sb{\,\mathrm{qA}}%
\,\mu =\infty $.

\end{document}